\newcommand{\R}{\mathbb{R}}
\newcommand{\Z}{\mathbb{Z}}
\newcommand{\C}{\mathbb{C}}
\newcommand{\Ss}{\mathbb{S}}
\newcommand{\be}{\begin{equation}}
\newcommand{\ee}{\end{equation}}
\newcommand{\eps}{\epsilon}
\renewcommand{\div}{\mathop{\mathrm{div}}\nolimits}
\DeclareMathOperator{\supp}{supp}
\renewcommand{\Re}{\mathop{\mathrm{Re}}}
\renewcommand{\Im}{\mathop{\mathrm{Im}}}
\DeclareMathOperator{\PV}{PV}
\DeclareMathOperator{\arcosh}{arcosh}
\DeclareMathOperator{\curl}{curl}
\newcommand{\loc}{\mathrm{loc}}
\newcommand\blank{{\mkern 2mu\cdot\mkern 2mu}}
\newcommand{\dd}[2]{\frac{\partial #1}{\partial #2}}
\newcommand{\set}[2]{\left\{ #1 \colon #2 \right\}}
\newtheorem{theorem}{Theorem}
\newtheorem{lemma}[theorem]{Lemma}
\newtheorem{proposition}[theorem]{Proposition}
\newtheorem{corollary}[theorem]{Corollary}
\newtheorem*{question}{Question}
\newtheorem*{op}{Open problem}
\newtheorem{remark}[theorem]{Remark}
\newtheorem{definition}[theorem]{Definition}
\title{Interaction energy of domain walls in a nonlocal Ginzburg-Landau type model from micromagnetics}
\author[1]{Radu Ignat}
\affil[1]{\small Institut de Math\'ematiques de Toulouse,
Universit\'e Paul Sabatier,
31062 Toulouse,
France. \authorcr
E-mail: Radu.Ignat@math.univ-toulouse.fr}
\author[2]{Roger Moser}
\affil[2]{\small Department of Mathematical Sciences,
University of Bath,
Bath BA2 7AY,
UK. \authorcr
E-mail: r.moser@bath.ac.uk}
\begin{document}


\maketitle

\begin{abstract}
We study a variational model from
micromagnetics involving a nonlocal Ginzburg-Landau type energy for
$\Ss^1$-valued vector fields. These vector fields form
domain walls, called N\'eel walls, that correspond to one-dimensional
transitions between two directions within the unit circle $\Ss^1$.
Due to the nonlocality of the energy, a N\'eel wall is a two length scale object,
comprising a core and two logarithmically decaying tails. Our aim is to determine the
energy differences leading to repulsion or attraction between N\'eel walls.
In contrast to the usual Ginzburg-Landau vortices, we obtain a renormalised
energy for N\'eel
walls that shows both a tail-tail interaction and a core-tail interaction.
This is a novel feature for Ginzburg-Landau type energies that entails
attraction between N\'eel walls of the same sign and repulsion between
N\'eel walls of opposite signs.

\bigskip\noindent
\textbf{Keywords:} N\'eel walls, Ginzburg-Landau, nonlocal, renormalised energy, interaction, micromagnetics
\end{abstract}

\section{Introduction}

In this article, we analyse a variational model describing the formation of domain walls in ferromagnetic thin films. These domain walls are called N\'eel walls
and represent one-dimensional transition layers connecting two directions of the magnetisation within the unit circle $\Ss^1$.
Due to dipolar effects, the variational problem is strongly nonlocal and generates N\'eel walls with an interesting core-and-tail structure.
Our aim is to study the repulsive or attractive interaction between the domain walls
in terms of their energy.
This interaction energy governs the location of the domain walls and is analogous to
the renormalised energy in Ginzburg-Landau type problems (see the seminal book \cite{BBH}). Although 
our analysis builds to some extent on the theory of Ginzburg-Landau vortices, our model has novel features that have not been studied
before. In contrast to the usual Ginzburg-Landau vortices, we obtain a renormalised energy for N\'eel
walls incorporating two types of interaction: a tail-tail interaction and a core-tail interaction. This is
due to the nonlocal character of the model and the two distinct length scales of the core and the tails (of logarithmic decay). Moreover, N\'eel walls of opposite signs
repel each other and N\'eel walls of the same sign attract each other,
whereas Ginzburg-Landau vortices show the opposite behaviour. This
observation is consistent with the physical prediction (see \cite[Section 3.6.(C)]{HS98}).
Furthermore, in typical Ginzburg-Landau systems, most of the energy is contained in the
highest order term, whereas in our model, it is the lowest order term that contains most of the energy.
From a technical point of view, the lack of a quantised Jacobian is an additional difficulty in the analysis of our model.

\subsection{The model}
\label{sec:model}
\paragraph{The magnetisation} We consider a one-dimensional model for transition
layers (incorporating several N\'eel walls) in the magnetisation of
a thin ferromagnetic film. The magnetisation is represented
by a continuous map $$m : (-1, 1) \to \Ss^1.$$
More precisely, we can think of
a ferromagnetic thin film of the shape $(-1, 1) \times (0, h)  \times \R$
(with very small thickness $h > 0$ in the $x_2$-direction) and a magnetisation
vector field $M : (-1, 1)  \times (0, h) \times \R \to \Ss^2$
of the form $M(x_1, x_2, x_3) = (m(x_1), 0)$.
Here, the non-dependence of $M$ on $x_2$ is a natural assumption for a thin
film, whereas the non-dependence on $x_3$ represents a simplification
of the problem.
(It implies that the walls appear in planes parallel to the $x_2x_3$-plane
and we assume that
the magnetisation depends only on the normal direction $x_1$.)
The strip over $x_1\in (-1,1)$ does not necessarily represent the whole ferromagnetic
sample, but merely a region that contains the N\'eel walls in question.
The assumption that the third component $M_3$ vanishes is consistent
with the fact that N\'eel walls correspond to an in-plane
magnetisation. Another characteristic feature of N\'eel
walls is that the magnetisations on either side (represented
by $m(-1)$ and $m(1)$ in our model) differ by a vector parallel to the
wall plane (in this case the $x_2$-direction).
Thus there exists a number $\alpha \in (0, \pi)$ such
that 
\be
\label{BC}
m_1(-1) = m_1(1) = \cos \alpha.
\ee 
Moreover, we will sometimes assume that
\be
\label{BCper}
m(-1) = m(1) = (\cos \alpha, \sin \alpha),
\ee
so that a winding number can be defined.


\begin{figure}[ht] %
  \vspace{5mm}
  \begin{minipage}{0.48\linewidth}
  \centering
  \psscalebox{0.9}{
  \begin{pspicture}(-2.3, -1.35)(2.7, 1.35) \psset{xunit=15mm, yunit=15mm}
    \psline[arrows=->, linewidth=0.01](-2.3, 0)(2.7, 0)
    \psline[linewidth=0.01](-2, -0.05)(-2, 0.05) \rput[t](-2, -0.15){$-1$}
    \psline[linewidth=0.01](2, -0.05)(2, 0.05) \rput[t](2, -0.15){$1$}
    
    \psline[arrows=->](-2, 0)(-1.567, 0.25)
    \psline[arrows=->](-1.5, 0)(-1.067, 0.129)
    \psline[arrows=->](-1, 0)(-0.5, 0)
    \psline[arrows=->](-0.4, 0)(0.033, -0.129)
    \psline[arrows=->](0.2, 0)(0.633, -0.25)
    \psline[arrows=->](0.8, 0)(0.671, -0.483)
    \psline[arrows=->](1.4, 0)(0.9, 0)
    \psline[arrows=->](1.7, 0)(1.571, 0.483)
    \psline[arrows=->](2, 0)(2.433, 0.25)
    
    \rput(-1.7, 0.35){$m$}
    \psarc[linewidth=0.01](-2, 0){0.4}{0}{30}
    \rput(-1.6, 0.09){$\alpha$}
    \psarc[linewidth=0.01](2, 0){0.4}{0}{30}
    \rput(2.4, 0.09){$\alpha$}
  \end{pspicture}
  }
  \end{minipage}
  \hfill
  \begin{minipage}{0.48\linewidth}
  \centering
  \psscalebox{0.9}{
    \begin{pspicture}(-1,-1.5)(4,1.2) \psset{xunit=1cm, yunit=1cm}
      \psaxes[labels=none, ticks=none]{->}(1.7, 0)(-2.2, -1.2)(5.2,1.6)
      \psline(1.6,1)(1.8,1) \rput[r](1.5,1){$1$} %

 \rput[r](1.6,1.6){$m_1$}
 \rput[r](5.5,-0.3){$x_1$}
 
        \psline(-1.7,0.34)(-1.5,0.34) \rput[r](1.65,0.54){$\cos \alpha$} %

        \psline(1.6,-1)(1.8,-1) \rput[r](1.5,-1){$-1$} %

       \psline(-1.6,-0.1)(-1.6,0.1) \rput[c](-1.6,-0.3){$-1$} %
       \psline(4.6,-0.1)(4.6,0.1) \rput[c](4.6,-0.3){$1$} %

        \pscustom{ %
          \psline(-1.6,0.34)(-1,0.34) \psplot[plotpoints=100]{-0.5}{-0.05}{x dup mul
            0.001 add -0.5 exp ln 0.2 mul 0.3 add} %
          \pscurve(-0.02,0.98)(0,1)(0.02,0.98) %
          \psplot[plotpoints=100]{0.05}{0.5}{x dup mul 0.001 add -0.5 exp ln 0.2 mul
            0.3 add} %
          \psline(1,0.34)(1.2,0.34)
          \psline(1.8,0.34)(2,0.34) \psplot[plotpoints=100]{2.5}{2.95}{x -3 add dup
            mul 0.001 add -0.5 exp ln -0.4 mul 0.3 add} %
          \pscurve(2.98,-0.98)(3,-1)(3.02,-0.98) %
          \psplot[plotpoints=100]{3.05}{3.5}{x -3 add dup mul 0.001 add -0.5 exp ln
            -0.4 mul 0.3 add} %
          \psline(4,0.34)(4.6,0.34) }
      \end{pspicture}
    }
  \end{minipage} 
  \vspace{-5mm}
    \caption{A magnetisation $m=(m_1, m_2)$ of winding number $-1$ consisting of a positive N\'eel wall of angle $2\alpha$ and a negative N\'eel wall of angle $2(\pi-\alpha)$ (right).}
  \label{fig:neel}
\end{figure}
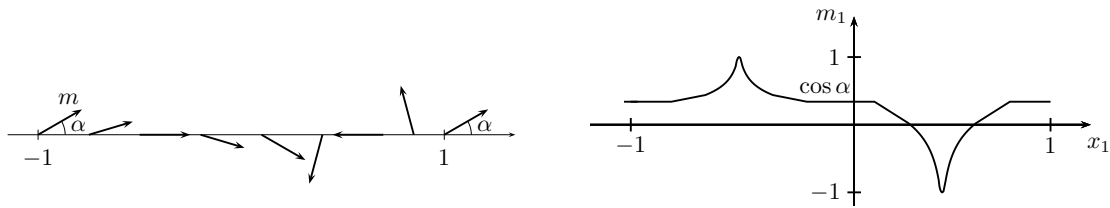

More precisely, since $m$ is continuous, there exists a
continuous function $\varphi : (-1, 1) \to \R$, called a lifting of $m$,
such that $$m = (\cos \varphi, \sin \varphi)\quad \textrm{in}\quad (-1,1)$$ and $\varphi(-1) = \alpha$. If \eqref{BCper} holds, then the winding number (or topological degree) of $m$ is defined as
$$\deg(m)=\frac{\varphi(1)-\varphi(-1)}{2\pi}\in \Z.$$
The angle $\alpha\in (0, \pi)$
will stay fixed throughout this paper. (The case $\alpha\in \{0, \pi\}$ is geometrically different and is not studied here.)
However, our arguments do not require that $m_2(-1) = m_2(1) = \sin \alpha$
in principle and we will present our results in a wider generality, i.e.,
with $m_2(\pm 1)\in \{\pm \sin \theta\}$. 

\paragraph{The energy} The energy for our model comprises
two terms, called the exchange energy and the magnetostatic energy (or stray-field energy),
respectively. The exchange energy is modelled by the
following expression involving the $L^2$-norm of the derivative $m'$:
\[
\frac{\epsilon}{2} \int_{-1}^1 |m'|^2 \, dx_1 = \frac{\epsilon}{2} \int_{-1}^1 (\varphi')^2 \, dx_1 = \frac{\epsilon}{2} \int_{-1}^1 \frac{(m_1')^2}{1 - m_1^2} \, dx_1.
\]
Here $\epsilon > 0 $ is a ratio between a material constant called the exchange length
and the length scale of the thin film. (This is a model
obtained after rescaling, i.e., the length scale of the ferromagnetic sample
has been set to unit size.) The
number $\epsilon$ is assumed to be small, and we will eventually study the
limit $\epsilon \searrow 0$.

We write $x = (x_1, x_2)$ for a generic point in the upper
half-plane $\R_+^2 = \R \times (0, \infty)$.
In order to compute the magnetostatic energy,
we need to solve
the boundary value problem\footnote{Here, $\nabla u$ represents the stray-field associated to $M$, which is also invariant in the $x_3$-direction.}
\begin{alignat}{2}
\Delta u & = 0 &\quad& \text{in } \R_+^2, \label{eqn:u_harmonic} \\
\dd{u}{x_2} & = -m_1' && \text{on } (-1, 1) \times \{0\}, \label{eqn:u_boundary1} \\
\dd{u}{x_2} & = 0 && \text{on } (-\infty, -1) \times \{0\} \text{ and on } (1, \infty) \times \{0\}. \label{eqn:u_boundary2}
\end{alignat}
Equivalently, if we extend $m_1$ by the constant $\cos \alpha$ on $\R\setminus (-1,1)$, then \be
\label{weak_stray}
\int_{\R^2_+}\nabla u\cdot \nabla \zeta\, dx=\int_{-\infty}^\infty m_1'\zeta(\blank, 0)\, dx_1 \quad \textrm{for every } \zeta\in C^\infty_0(\R^2).
\ee
Let $\dot{W}^{1, 2}(\R^2)$ be the completion of $C_0^\infty(\R^2)$
with respect to the norm
\[
\|\zeta\|_{\dot{W}^{1, 2}(\R^2)} = \|\nabla \zeta\|_{L^2(\R^2)}.
\]
(We sometimes abuse notation and
treat elements of $\dot{W}^{1, 2}(\R^2)$ as functions, even though
the completion process identifies any two functions that differ by a constant.)
For an open set $\Omega \subset \R^2$, we write
$\dot{W}^{1, 2}(\Omega)$ for the set of all restrictions
of functions in $\dot{W}^{1, 2}(\R^2)$ to $\Omega$ and
\[
\|\zeta\|_{\dot{W}^{1, 2}(\Omega)} = \|\nabla \zeta\|_{L^2(\Omega)}.
\]
By the Lax-Milgram theorem, solutions of \eqref{weak_stray} are
unique in $\dot{W}^{1, 2}(\R_+^2)$ (i.e., up to a constant). Thus the
quantity
\[
\frac{1}{2} \int_{\R_+^2} |\nabla u|^2 \, dx
\]
depends only on $m_1$. This is the term representing the magnetostatic energy.
It is worth remarking that the solutions $u$ of
\eqref{eqn:u_harmonic}--\eqref{eqn:u_boundary2}
in $\dot{W}^{1, 2}(\R_+^2)$ have a limit for $|x| \to \infty$.
Indeed, if we extend $u$ to $\R^2$ by even reflection, then we obtain
a harmonic function near $\infty$ with finite Dirichlet
energy, and it is well-known that the limit exists at $\infty$.
Then we normalise this constant and define $U(m)$
(sometimes also denoted $U(m_1)$) to be the unique solution
of \eqref{weak_stray} in $\dot{W}^{1, 2}(\R_+^2)$
with\label{def:U}
\[
U(m) \to 0 \, \, \textrm{ as } \, \, |x| \to \infty.
\]
Moreover, in view of
\eqref{weak_stray}, using the extension of 
$m_1$ by the constant $\cos \alpha$ on $\R\setminus (-1,1)$, we may
express the magnetostatic energy in terms of the homogeneous $\|\cdot\|_{\dot{H}^{1/2}}$-seminorm of $m_1$ (see e.g. \cite{DKMOreduced, Ig}):
\be
\label{stray_uniq}
\frac{1}{2} \int_{\R_+^2} |\nabla U(m)|^2 \, dx=\frac 1 2 \int_{\R} \left|\left|\frac{d}{dx_1}\right|^{1/2}m_1\right|^2\, dx_1.
\ee

To summarise, we study the energy functional
\[
E_\epsilon(m) = \frac{\epsilon}{2} \int_{-1}^1 |m'|^2 \, dx_1 + \frac{1}{2} \int_{\R_+^2} |\nabla U(m)|^2 \, dx
\]
for $m \in W^{1, 2}\big((-1, 1), \Ss^1\big)$ satisfying \eqref{BC}. We are interested in the behaviour of $m$ and of its energy $E_\eps(m)$ as $\epsilon \searrow 0$,
especially under conditions that force the nucleation of several
N\'eel walls.

\paragraph{N\'eel walls} If we trace $m$ from $-1$ to $1$, we may well find that
$m$ winds around the circle $\Ss^1$ one or several times. If that happens,
then there necessarily exist two points $a_+, a_- \in (-1, 1)$
such that $m_1(a_+) = 1$ and $m_1(a_-) = -1$. But even if the
topology of $m$ is trivial (i.e., if $\deg(m) = 0$), a transition from
$(\cos \alpha, \sin \alpha)$ to $(\cos \alpha, - \sin \alpha)$
may occur, giving rise to
a point in between where $m_1$ reaches one of the values $\pm 1$. We think of any such transition as a N\'eel wall and
we use these points in order to track them. Obviously, it
is possible for $m_1$ to attain $\pm 1$ when no proper transition
occurs, but from the energetics point of view, this makes
no difference and we call this a N\'eel wall anyway. 
We speak of
a positive or negative N\'eel wall depending on the sign of $m_1$ (see Figure \ref{fig:neel}). 
We will see that a N\'eel wall has a two-length scale structure comprising a core of size $\delta=\eps \log \frac 1 \eps$ around the transition point and two tails of size $O(1)$,
where $m_1$ decays logarithmically to $\cos \alpha$ (see Theorem~\ref{thm:core_convergence} below).
The total change of the phase during the transition is called the rotation
angle of the N\'eel wall (which may be $0$ by the above convention).\footnote{When
studying the interaction between a pair of walls, the physics literature (see \cite{HS98}) distinguishes between winding walls, which refers to a pair with the same rotation
sense, and unwinding walls, which refers to a pair with opposite rotation sense.
Except for degenerate cases, a pair of N\'eel walls with opposite signs
according to our terminology corresponds to winding walls and a pair with
the same sign corresponds to unwinding walls.} 
For more physical background, we refer to \cite{HS98, DKMO05}.

We will assume in the following that there are certain points $a_1, \ldots, a_N \in (-1, 1)$
such that
\begin{equation} \label{eqn:ordering}
-1 < a_1 < \cdots < a_N < 1
\end{equation}
and certain numbers $d_1, \ldots, d_N \in \{-1, 1\}$ such that
\be
\label{points}
m_1(a_n) = d_n \quad \textrm{for} \quad n = 1, \ldots, N.\ee
These points $(a_n)_{1\leq n \leq N}$ represent the positions of the N\'eel walls
that we study, while $(d_n)_{1\leq n \leq N}$ indicate
whether a N\'eel wall is positive or negative. 
We keep the number $N$ of walls fixed
throughout the paper. Let 
\[
A_N  =\left\{a = (a_1, \ldots, a_N) \in (-1, 1)^N \text{ with } \eqref{eqn:ordering}\right\}.
\]
For $a \in A_N$ and $d \in \{\pm 1\}^N$,
we consider the set $$M(a, d)=\left\{m \in W^{1,2}((-1, 1); \Ss^1) \textrm{ with } \eqref{BC}  \textrm{ and } \eqref{points} \right\}.$$ 

Our aim is to answer the following question.

\begin{question} \label{question1}
For a given $a \in A_N$ and $d \in \{\pm 1\}^N$, what is the behaviour of
$$\inf_{M(a, d)} E_\epsilon \quad \textrm{ as } \quad \epsilon \searrow 0?$$
\end{question}

That is, if we prescribe N\'eel walls at the positions
$a_1, \ldots, a_N$ with signs $d_1, \ldots, d_N$,
what energy does it take to achieve such a configuration?
We first note that a minimal configuration $m$ always
exists and that its first component $m_1$ is unique.
(Obviously, $|m_2|$ is also unique, but the sign
of the $m_2$ component can change between $a_n$ and $a_{n+1}$
for two different minimisers.)

\begin{proposition}
\label{pro:exist_unique}
There exists a minimiser of 
$\inf_{M(a, d)} E_\epsilon$ for any $\eps>0$. Moreover, any minimiser $m$ is smooth on $(-1,1)\setminus \{a_1, \dots, a_N\}$ and has a unique $m_1$-component.
\end{proposition}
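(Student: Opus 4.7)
The plan is to treat the three assertions in turn by standard variational methods, with the uniqueness of $m_1$ as the most interesting step.

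\emph{Existence.} Apply the direct method. A minimizing sequence $(m^{(k)}) \subset M(a,d)$ is bounded in $W^{1,2}((-1,1);\R^2)$ by the bound on the exchange term together with $|m^{(k)}| = 1$. Passing to a subsequence, $m^{(k)} \rightharpoonup m$ weakly in $W^{1,2}$ and uniformly on $[-1,1]$ by the compact embedding. Uniform convergence preserves $|m|=1$, \eqref{BC}, and \eqref{points}, so $m \in M(a,d)$. The exchange energy is lower semicontinuous under weak $W^{1,2}$-convergence; by \eqref{stray_uniq}, the magnetostatic energy equals $\tfrac{1}{2}\|m_1\|_{\dot H^{1/2}(\R)}^2$ (with $m_1$ extended by $\cos\alpha$ outside $(-1,1)$), and is lower semicontinuous under the weak $H^{1/2}$-convergence of the extensions (obtained from their boundedness in $H^1(\R)$). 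Hence $m$ minimizes $E_\eps$.

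\emph{Smoothness.} Fix an open interval $I \subset (-1,1)\setminus\{a_1,\dots,a_N\}$ and a continuous lift $\varphi$ with $m = (\cos\varphi,\sin\varphi)$ on $I$. Perturbations $\varphi \mapsto \varphi + s\psi$ with $\psi \in C^\infty_c(I)$ preserve all constraints in $M(a,d)$, so the Euler-Lagrange equation
\[
\eps\, \varphi'' + \sin\varphi \cdot \Lambda m_1 = 0 \qquad \text{on } I
\]
holds in the sense of distributions, where $\Lambda = (-d^2/dx_1^2)^{1/2}$ acts on $m_1$ extended to $\R$ by $\cos\alpha$. Since $m_1 \in H^1(\R)$, one has $\Lambda m_1 \in L^2(\R)$, giving $\varphi \in H^2_{\loc}(I)$. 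To iterate, I would use the pseudolocality of $\Lambda$: with a cutoff $\chi \in C^\infty_c(I)$, the term $\Lambda((1-\chi)m_1)$ is smooth on $\{\chi=1\}$, while $\Lambda(\chi m_1)$ has exactly one less Sobolev derivative than $\chi m_1$. Bootstrapping in the Euler-Lagrange equation then yields $m \in C^\infty(I)$.

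\emph{Uniqueness of $m_1$.} Let $m,\tilde m \in M(a,d)$ be two minimizers, and set $m_1^* = \tfrac{1}{2}(m_1 + \tilde m_1)$. A direct Hessian computation shows that
\[
F(u,p) = \frac{p^2}{1-u^2}
\]
is convex on $(-1,1)\times\R$ (its Hessian has positive trace $2/(1-u^2)$ and nonnegative determinant $4p^2/(1-u^2)^3$). Since $|m'|^2 = F(m_1,m_1')$ a.e., Jensen's inequality gives $\int F(m_1^*,(m_1^*)')\,dx_1 \le \tfrac12\bigl(\int F(m_1,m_1')\,dx_1 + \int F(\tilde m_1,\tilde m_1')\,dx_1\bigr) < \infty$. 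This allows one to lift $m_1^*$ to $m^* = (m_1^*, \sigma\sqrt{1-(m_1^*)^2}) \in W^{1,2}((-1,1);\Ss^1)$ by choosing $\sigma \in \{\pm 1\}$ locally constant on the open set $\{m_2^*\neq 0\}$, so that $m^* \in M(a,d)$. By the convexity of $F$, the exchange energy of $m^*$ is at most the average of those of $m$ and $\tilde m$; and the magnetostatic energy $\tfrac12\|\cdot\|_{\dot H^{1/2}(\R)}^2$ is strictly convex along the pair when $m_1 \ne \tilde m_1$, because the difference is compactly supported in $(-1,1)$, forcing $\|m_1-\tilde m_1\|_{\dot H^{1/2}}>0$. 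Combining these, $E_\eps(m^*) < \tfrac{1}{2}(E_\eps(m) + E_\eps(\tilde m)) = \inf_{M(a,d)}E_\eps$, a contradiction, so $m_1 = \tilde m_1$. The main obstacle in this argument is that $M(a,d)$ is not convex in $m$; the convexity of $F$ is exactly the fact needed to rescue convexity of the projected class $\{m_1 : m \in M(a,d)\}$ and thereby enable the standard strict-convexity proof of uniqueness.
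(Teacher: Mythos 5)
Your proposal is correct and follows essentially the same route as the paper's (very brief) proof: the direct method for existence, a bootstrap via the Euler--Lagrange equation for regularity (for which the paper simply cites \cite{Ignat_Knupfer}), and uniqueness of $m_1$ from the convexity of $(v,w)\mapsto v^2/(1-w^2)$ combined with the strict convexity of the $\dot H^{1/2}$-seminorm in \eqref{stray_uniq}. Your write-up merely supplies the details (Hessian computation, lifting of the midpoint $m_1^*$ back to an $\Ss^1$-valued competitor) that the paper leaves implicit.
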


\begin{proof}
The direct method in the calculus of variations yields a minimiser $m$ of $E_\eps$ in $M(a,d)$. The regularity of $m$ is standard (see, e.g., \cite{Ignat_Knupfer}). The uniqueness of $m_1$ follows from the strict convexity of \eqref{stray_uniq} and of the function $(v,w) \mapsto \frac{v^2}{1-w^2}$ for $(v, w) \in \R\times (-1,1)$.
\end{proof}

We look for an expansion of $\inf_{M(a, d)} E_\epsilon$ similar to \cite{DKMO_rep},
where it is shown that
\be
\label{DKMO_expansion}
\inf_{M(a, d)} E_\epsilon = \sum_{n = 1}^N \frac{\pi (d_n - \cos \alpha)^2}{2 \log \frac{1}{\epsilon}} + O\left(\frac{\log \log \frac{1}{\epsilon}}{\left(\log \frac{1}{\epsilon}\right)^2}\right)
\ee
for $\epsilon > 0$ small. Since this is not good enough
to understand the interaction between domain walls, we need to determine the second term in such an expansion completely and identify the third term as well.
This problem is analogous to finding the ``renormalised energy''
in Ginzburg-Landau type problems, but in the context of N\'eel walls, it has
remained open until now.

We give the answer to this question in Theorem \ref{thm1}.
The key is to identify the contributions to the renormalised energy coming from
the interaction between two tails and between a core and a tail of two
different walls. It turns out that the above expansion is easier
to understand when we replace $\epsilon$ by $\delta = \epsilon \log \frac{1}{\epsilon}$
(recall that this is the typical length scale of the core
of a N\'eel wall).
The first two terms of the expansion \eqref{DKMO_expansion} are
then united in a single leading order term in the expansion in
$1/|\log \delta|$ (see \eqref{DKMO_improved} below). The next-to-leading order term corresponds
to the renormalised energy.

\subsection{Motivation} There are several reasons for asking the above question.
First, we may want to study the positions of N\'eel walls in
equilibrium. Once we have determined the renormalised
energy, we can find the likely positions by minimising it. Second,
we may want to study the dynamics of N\'eel walls (see, e.g.,
\cite{CMO07, Cote-Ignat-Miot}).
The dynamics of the magnetisation is described by the
Landau-Lifshitz-Gilbert equation, which is derived from
the micromagnetic energy through a variational principle.
For reasons that are explained below, understanding the
asymptotic behaviour of the energy is expected to be a
key step towards deriving an effective motion law for
the walls in the limit $\epsilon \searrow 0$.
A third reason for studying these long range interactions is
that we want to understand some phenomena in thin ferromagnetic films
where they matter,
such as cross-tie walls. A cross-tie wall is a typical domain
wall that consists in an ensemble of N\'eel walls and micromagnetic
vortices (similar to Ginzburg-Landau vortices), see
\cite{DKMO_rep, ARS02, RS01, RS03}. It has an internal length scale,
the size of which is not predicted by any existing theory,
and our analysis on the interaction energy of N\'eel walls
could represent an significant step forward here. 

A related question concerns the analysis of general transition layers $m$
carrying a winding number when the location of the N\'eel walls
is no longer prescribed. More precisely, suppose that the lifting
$\varphi : (-1, 1) \to \R$ of $m = (\cos \varphi, \sin \varphi)$
satisfies the boundary conditions
$\varphi(-1) = \alpha$ and $\varphi(1) = 2\ell \pi + \alpha$,
so that we have winding number $\ell$, i.e. $\ell=\deg(m)$. Hence
the magnetisation performs $\ell$ full rotations, so that \eqref{BCper} is satisfied.
Then by continuity, we necessarily have a certain number
of transitions between $(\cos \alpha, \sin \alpha)$ and
$(\cos \alpha, - \sin \alpha)$.

\begin{op} 
For a prescribed winding number and given suitable control of $E_\epsilon(m)$,
what can we say about the profile of $m$ and of the stray field potential
$U(m)$? 
\end{op}

As mentioned before, a prescribed degree $\ell$ will automatically give
rise to certain N\'eel walls. But it is not obvious, for example,
that these N\'eel walls stay separate from one another (uniformly as $\eps\to 0$) and that one can rule out other transitions. In fact, it is an open 
question whether the lifting of $m$ is monotone even for minimisers (which would exclude unexpected transitions). 
However, assuming good control of the energy, we expect to have exactly
$2\ell$ transitions (corresponding to the expected N\'eel walls) and no
extraordinary behaviour of the magnetisation in between.
For the stray field energy, it is expected
that the energy density concentrates at the walls. Such information would be
useful in the study of compactness properties
in the appropriate function spaces, for example with a view
to $\Gamma$-convergence.

\subsection{Main results} \label{sect:result}

For any $\epsilon \in (0, \frac{1}{2}]$, let $$\delta = \epsilon \log \frac{1}{\epsilon}$$
and define the metric $\varrho$ on $(-1, 1)$ by\footnote{We will not
use the fact that $\varrho$ is a metric, but if we want to
verify it, we can use that $\varrho(\Phi_d(b), \Phi_d(c)) = \varrho(b, c)$
for the M\"obius transforms $\Phi_d$ defined in \eqref{Mobius} below for every $d\in (-1,1)$. 
For the triangle inequality, it then suffices to show that
$\varrho(c, 0) \le \varrho(b, 0) + \varrho(b, c)$ for $b, c \in (-1, 1)$,
which is not difficult.}
\[
\varrho(b, c) = \frac{|b - c|}{1 - bc}\in [0,1)\quad \textrm{for} \quad b, c\in (-1,1).
\]

We have the following result, answering the question on page \pageref{question1}.

\begin{theorem} \label{thm1}
There exists a function $e : \{\pm 1\} \to \R$ such that for any
$a \in A_N$ and $d \in \{\pm 1\}^N$, the following holds true.
Let $\gamma_n = d_n - \cos \alpha$ for $n = 1, \ldots, N$ and let
\[
\mathbb{W}(a,d) = \sum_{n = 1}^N e(d_n) - \frac{\pi}{2} \sum_{n = 1}^N \gamma_n^2 \log(2 - 2a_n^2) 
- \frac{\pi}{2} \sum_{n = 1}^N \sum_{k \not= n} \gamma_k \gamma_n \log \left(\frac{1 + \sqrt{1 - \varrho(a_k, a_n)^2}}{\varrho(a_k, a_n)}\right).
\]
Then
\[
\mathbb{W}(a,d) = \lim_{\epsilon \searrow 0} \left(\left(\log \frac{1}{\delta}\right)^2 \inf_{M(a, d)} E_\epsilon  - \frac{\pi}{2} \log \frac{1}{\delta}  \sum_{n = 1}^N \gamma_n^2\right).
\]
\end{theorem}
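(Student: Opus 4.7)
My plan is to prove Theorem~\ref{thm1} by matching asymptotic upper and lower bounds that exploit the two-scale structure of a N\'eel wall: a \emph{core} of size $\delta$ where the transition $\cos\alpha\leftrightarrow d_n$ occurs, and a logarithmically decaying \emph{tail} on the outer scale $O(1)$. The natural normalisation is $\psi_\eps=|\log\delta|\bigl(m_{1,\eps}-\cos\alpha\bigr)$, extended by $0$ outside $(-1,1)$; in the tail regime this should converge to a limit $\psi_\ast$ which is essentially the harmonic extension on $\R^2_+$ with logarithmic point singularities of strengths $\gamma_n$ at the wall positions $a_n$. The renormalised energy $\mathbb{W}(a,d)$ then arises from $\tfrac12\bigl\||\partial_{x_1}|^{1/2}\psi_\ast\bigr\|_{L^2(\R)}^2$, after subtracting the divergent self-energy of each singularity, plus $N$ core constants $e(d_n)$. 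Throughout I would use the equivalent formulation \eqref{stray_uniq} of the stray-field term, which linearises the nonlocal interaction into a bilinear form in $m_1-\cos\alpha$.

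For the \emph{upper bound}, I would construct an explicit competitor. Near each $a_n$ glue in a rescaled optimal core profile of width $\delta$, defined as a minimiser of a blown-up problem whose infimum, after stripping the matching logarithmic tail, defines $e(d_n)$. Outside the cores set
\[
m_1=\cos\alpha+\frac{1}{|\log\delta|}\sum_{n=1}^N \gamma_n\, g_n,
\]
where $g_n$ is a fixed smooth profile on $(-1,1)$, even-extended by $0$, behaving like $\log\frac{1}{|x_1-a_n|}$ near $a_n$. The natural choice is to build $g_n$ from the M\"obius transform $\Phi_{a_n}(x_1)=\frac{x_1-a_n}{1-a_n x_1}$, which maps $(-1,1)$ to itself sending $a_n$ to $0$ and trivialises the distance $\varrho$ appearing in the statement; the component $m_2$ is then determined by $|m|=1$ with sign changes at each $a_n$. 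The outer exchange energy is $O(\eps)$ and negligible, while the core exchange gives $\sum_n e(d_n)/|\log\delta|^2$. A direct calculation of
\[
\tfrac12\bigl\||\partial_{x_1}|^{1/2}m_1\bigr\|_{L^2(\R)}^2=\frac{1}{2|\log\delta|^2}\sum_{n,k}\gamma_n\gamma_k\,\bigl\langle|\partial_{x_1}|^{1/2}g_n,|\partial_{x_1}|^{1/2}g_k\bigr\rangle,
\]
using the M\"obius change of variables, pulls out the leading $\pi|\log\delta|\sum_n\gamma_n^2$ together with the interaction term $\log\frac{1+\sqrt{1-\varrho^2}}{\varrho}$ (this is precisely the value at $0$ of the harmonic extension of $\log\frac{1}{|\Phi_{a_k}(\,\cdot\,)|}$, transparent in conformal coordinates) and the core-tail term $\log(2-2a_n^2)$, which records the interaction of a logarithmic source at $a_n$ with the boundary of $(-1,1)$.

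For the \emph{lower bound}, start from a minimiser $m_\eps$ given by Proposition~\ref{pro:exist_unique} and form $\psi_\eps$ as above. I would prove that on each annulus $B_R(a_n)\setminus B_{r\delta}(a_n)$ with $r\delta\ll R^{-1}\ll 1$, $\psi_\eps$ is bounded in $\dot W^{1,2}$ of the upper half-plane extension, and converges (up to subsequences) to $\psi_\ast$ off the cores. Around each core the local blow-up of $m_\eps$ at scale $\delta$ yields a minimiser of the blow-up problem defining $e(d_n)$; a Fatou-type argument on the exchange density then recovers the self-energy $e(d_n)$. On the complement of the cores, weak $\dot H^{1/2}$ lower semicontinuity combined with the explicit conformal Green's function computation above extracts the tail-tail and core-tail contributions. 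The rough expansion \eqref{DKMO_expansion} only serves to enter the regime where $\psi_\eps$ has the right logarithmic profile. To avoid double counting in the intermediate annular region between scales $\delta$ and $1$, I would perform a dyadic splitting and use monotonicity of the Dirichlet energy in annular shells to control the cross terms.

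The main obstacle, in my view, is extracting the exact constant $-\tfrac\pi2\gamma_n^2\log(2-2a_n^2)$ in the core--tail crossover: it arises from the self-interaction of each logarithmic tail with its own reflection induced by the effective conformal geometry of $(-1,1)$. In contrast to Ginzburg-Landau, there is no quantised Jacobian to anchor a ball construction, the nonlocal $\dot H^{1/2}$ form mixes all scales, and the exchange energy is of lower order, so the sharp constants cannot be obtained from purely local lower bounds. This forces an essentially global matching argument relying on the explicit harmonic kernel and the M\"obius structure; it is here that the delicate balance between cores, outer tails, and boundary effects must be tracked to the sub-leading order $1/(\log\tfrac1\delta)^2$.
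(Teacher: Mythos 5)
Your overall architecture (matched upper/lower bounds, cores of width $\delta$ glued into logarithmic tails built from the M\"obius maps $\Phi_{a_n}$, conformal computation of the $\dot H^{1/2}$ interaction) is the same as the paper's, and the tail computation you describe does produce the terms $\frac{\pi}{2}\gamma_n^2\log(2-2a_n^2)$ and $\frac{\pi}{2}\gamma_k\gamma_n\log\bigl(\frac{1+\sqrt{1-\varrho^2}}{\varrho}\bigr)$. But there is a genuine gap, and it is exactly the novel mechanism of this problem: those terms come out of the bilinear form $\sum_{n,k}\gamma_n\gamma_k\langle|\partial_{x_1}|^{1/2}g_n,|\partial_{x_1}|^{1/2}g_k\rangle$ with a \emph{plus} sign (this is the quantity $W_1(a,d)$ of Section \ref{sec:comput_ene}), whereas Theorem \ref{thm1} has them with a \emph{minus} sign. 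The sign is flipped by a separate contribution $W_2(a,d)=-2W_1(a,d)$, the core--tail interaction, which your proposal never identifies. Concretely: because the tails of the other walls and the boundary reflection shift the plateau value of $m_1$ near $a_n$ from $\cos\alpha$ to $\cos\alpha+\lambda_n/\log\frac1\delta$ (with $\lambda_n=\gamma_n\log(2-2a_n^2)+\sum_{k\neq n}\gamma_k\mu_{a_k}(a_n)$), the effective transition height of the core is $\gamma_n-\lambda_n/\log\frac1\delta$ rather than $\gamma_n$. Since the core-plus-tail self-energy is $\frac{\pi\gamma^2}{2\log\frac1\delta}+O((\log\frac1\delta)^{-2})$, i.e.\ quadratic in $\gamma$ at the \emph{leading} order, this $O(1/\log\frac1\delta)$ perturbation of $\gamma$ produces a first-order change $-\pi\gamma_n\lambda_n/(\log\frac1\delta)^2$, which lives precisely at the order of the renormalised energy. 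Summing gives $W_2=-2W_1$ and hence the net $-W_1$ in the theorem. The paper captures this through the Lipschitz dependence of $\gamma\mapsto\inf_{M_\gamma}E_\eps^\gamma-\frac{\pi\gamma^2}{2\log\frac1\delta}$ (Proposition \ref{prop:change_gamma}, Corollary \ref{cor:core_estimate}, Remark \ref{rem:core_estimate}); your "Fatou-type argument on the exchange density" for the cores cannot see this effect, because it is a magnetostatic, not an exchange, contribution, and it enters through the leading-order core energy, not through $e(d_n)$.

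Relatedly, your diagnosis of "the main obstacle" is incorrect in a way that confirms the gap: you attribute $-\frac{\pi}{2}\gamma_n^2\log(2-2a_n^2)$ to "the self-interaction of each logarithmic tail with its own reflection", but that self-interaction is the tail--boundary term and contributes $+\frac{\pi}{2}\gamma_n^2\log(2-2a_n^2)$; the observed coefficient is $+\frac{\pi}{2}(\cdots)-\pi(\cdots)=-\frac{\pi}{2}(\cdots)$, the second piece being the core's response to its own boundary-reflected tail. Without this ingredient your upper bound would overshoot by $2W_1$ and your lower bound would not close, so the argument as proposed proves the expansion with the wrong renormalised energy. The fix requires (i) quantifying the plateau shift $\lambda_n$ near each core (estimate \eqref{eqn:mu_estimate}) and (ii) a stability estimate for the core energy under $O(1/\log\frac1\delta)$ changes of the transition height, which is the content the paper develops in Section 4 and which is absent from your plan.
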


In analogy to the theory of Ginzburg-Landau vortices, we
call $\mathbb{W}(a,d)$ the renormalised energy for the $N$ walls placed
at $a=(a_1, \dots a_N)$ with signs $d=(d_1, \dots, d_N)$.
As the theorem shows, $\mathbb{W}(a,d)$ represents the next-to-leading order term
in the expansion of $\inf_{M(a, d)} E_\epsilon$ in $1/|\log \delta|$. If we express these asymptotics in terms of $\eps$, our result improves \eqref{DKMO_expansion} by determining the precise second and third coefficients:
\be
\label{DKMO_improved}
\inf_{M(a, d)} E_\epsilon =\frac{1}{\left(\log \frac{1}{\epsilon}\right)^2} \left(\frac 12\sum_{n = 1}^N \pi (d_n - \cos \alpha)^2 \left(\log \frac{1}{\epsilon} + \log \log \frac{1}{\epsilon}\right)+\mathbb{W}(a,d)\right) 
+o\left( \frac{1}{\left(\log \frac{1}{\epsilon}\right)^2} \right).\ee

We now briefly discuss how the above expression comes
about. Suppose that for a given $a \in A_N$, we study minimisers $m$ of $E_\epsilon$
in $M(a, d)$. When $\epsilon$ is small, we expect to have a
typical N\'eel wall profile near each of the points
$a_1, \ldots, a_N$ with the prescribed signs $d_1, \ldots, d_N$, and the full transition layer $m$ is essentially a superposition
of all of these. As discussed previously, we can think of a N\'eel wall as consisting
of two parts: a small core around $a_n$ and two logarithmically
decaying tails. In our situation, the walls are confined in
the relatively short interval $(-1, 1)$ and each tail will
interact with the other walls and with the boundary as well.
We can then account for the full energy $\inf_{M(a, d)} E_\epsilon$ (at leading
and next-to-leading order) as follows.

\begin{description}
\item[Core energy.]
The core of each wall requires a certain amount of energy,
namely
\[
\frac{e(1)}{\left(\log \frac{1}{\delta}\right)^2} \quad \text{and} \quad \frac{e(-1)}{\left(\log \frac{1}{\delta}\right)^2}
\]
for a positive and a negative wall, respectively. The constants $e(\pm1)$ are given in Definition~\ref{def:e} below
as limits of a rescaled energy of the core profile as $\eps\to 0$. Then the
sum accounts for the term
\[
\frac{\sum_{n = 1}^N e(d_n)}{\left(\log \frac{1}{\delta}\right)^2}.
\]
This is the only term where we have a contribution from
the exchange energy and it appears only at next-to-leading
order in the full energy. All the remaining terms below come
from the magnetostatic energy alone.

\item[Tail energy.]
The two tails of the wall at $a_n$ give rise to the energy
\[
\frac{\pi \gamma_n^2}{2\log \frac{1}{\delta}},
\]
leading to a total of
\[
\frac{\pi \sum_{n = 1}^N \gamma_n^2 }{2\log \frac{1}{\delta}}.
\]
This is the leading order term of the full energy.

\item[Tail-boundary interaction.]
Moving a wall relative to the boundary points $\pm 1$ will
deform the tail profile, resulting in a change of the energy.
This phenomenon gives rise to the energy
\[
\frac{\pi \gamma_n^2 \log(2 - 2a_n^2)}{2\left(\log \frac{1}{\delta}\right)^2}
\]
for the wall at $a_n$. Summing up these contributions, we obtain
\[
\frac{\pi}{2\left(\log \frac{1}{\delta}\right)^2} \sum_{n = 1}^N \gamma_n^2 \log(2 - 2a_n^2).
\]
(The sign here is not a mistake; it is the opposite of the sign of
the corresponding expression in Theorem \ref{thm1}.) This means
that the tails are attracted by the boundary, in the sense
that the energy decreases if $a_n$ approaches $\pm 1$.

\item[Tail-tail interaction.]
There is an energy contribution coming from reinforcement or cancellation
between the stray fields generated by different walls.
For the walls at $a_k$ and $a_n$ with $k \not= n$, this
amounts to
\[
\frac{\pi \gamma_k \gamma_n}{2\left(\log \frac{1}{\delta}\right)^2} \log \left(\frac{1 + \sqrt{1 - \varrho(a_k, a_n)^2}}{\varrho(a_k, a_n)}\right).
\]
The total contribution is
\[
\frac{\pi}{2\left(\log \frac{1}{\delta}\right)^2} \sum_{n = 1}^N \sum_{k \not= n} \gamma_k \gamma_n \log \left(\frac{1 + \sqrt{1 - \varrho(a_k, a_n)^2}}{\varrho(a_k, a_n)}\right).
\]
(Again we have the opposite sign relative to the above theorem.)
A conclusion is that the tails of two walls attract each other
if they have opposite signs and repel each other if they have the same sign.\footnote{This is because the function $t\mapsto \frac{1 + 
\sqrt{1 -t^2}}{t}$ is decreasing on $(0,1)$.}

\item[Tail-core interaction.]
Since the profile of a N\'eel wall decays only logarithmically,
it will change the turning angle of the neighbouring walls
slightly. This has an effect on the energy as well (at the next-to-leading order).
Indeed, the
tail of the wall at $a_k$ and the core of the wall at $a_n$ with $k \not= n$ lead to a contribution of
\[
- \frac{\pi \gamma_k \gamma_n}{\left(\log \frac{1}{\delta}\right)^2} \log \left(\frac{1 + \sqrt{1 - \varrho(a_k, a_n)^2}}{\varrho(a_k, a_n)}\right).
\]
We also have an interaction between the two tails of a wall
and its own core: if $k = n$, then we obtain the energy
\[
{-}\frac{\pi \gamma_n^2  \log(2 - 2a_n^2)}{\left(\log \frac{1}{\delta}\right)^2}.
\]
This gives a total of
$$
-\frac{\pi}{\left(\log \frac{1}{\delta}\right)^2} \sum_{n = 1}^N \Biggl(\gamma_n^2 \log(2 - 2a_n^2) 
+ \sum_{k \not= n} \gamma_k \gamma_n \log \left(\frac{1 + \sqrt{1 - \varrho(a_k, a_n)^2}}{\varrho(a_k, a_n)}\right)\Biggr).
$$
This is twice the size of the terms from the tail-boundary interaction and
tail-tail interaction, but with the
opposite signs, resulting in a net repulsion between walls of
opposite signs and a net attraction between walls of the same sign.
Furthermore, we have a net repulsion of the walls by the boundary.
\end{description}
Notwithstanding the term `energy' used in this description,
strictly speaking, these are energy differences and
therefore some of them may be negative. All except one of these contributions occur similarly in the theory of Ginzburg-Landau vortices. The core-tail interaction, 
on the other hand, is new and more delicate to handle.

\subsection{Physical relevance} Our result represents a rigorous proof of the physical prediction on the interaction energy between N\'eel walls. Indeed, Hubert and Sch\"afer (\cite[Section 3.6. (C)]{HS98}) predict the following behaviour in the case of a pair
of N\'eel walls: {\it ``The extended tails of N\'eel walls lead to strong interactions
between them [...] The interactions become important as soon as the tail regions overlap.
The sign of the interaction depends on the wall rotation sense. N\'eel walls of opposite rotation sense (so-called unwinding walls) attract each other because they generate
opposite charges in their overlapping tails. If they are not pinned, they can annihilate.
N\'eel walls of equal rotation sense (winding walls) repel each other.''}
(We recall that unwinding walls
correspond---according to our definition in Section \ref{sec:model}---to a pair
of N\'eel walls with the same sign, while winding walls correspond to a pair of
walls with the opposite signs as in Figure \ref{fig:neel}.)

\subsection{Comparison with a linear model}

If we replace the exchange energy by the simpler expression
\[
\frac{\epsilon}{2} \int_{-1}^1 (m_1')^2 \, dx_1,
\]
then the energy functional, now given by
\[
\tilde{E}_\epsilon(m_1) = \frac{\epsilon}{2} \int_{-1}^1 (m_1')^2 \, dx_1 + \frac{1}{2} \int_{\R_+^2} |\nabla U(m_1)|^2 \, dx, \quad m_1:(-1,1)\to \R,
\]
becomes a quadratic form and the Euler-Lagrange equation
for its critical points becomes linear. This functional
has been used as a tool for studying the energy of N\'eel
walls \cite{DKMO_rep, Ignat_Knupfer}.
Since the exchange energy in $E_\eps$ does not enter the expansion \eqref{DKMO_expansion}
at the leading order, we may expect good approximations from the linear model involving the energy
functional $\tilde{E}_\eps$. The exchange energy does have an effect on the
next-to-leading order term however, even though it is not
through a direct contribution but rather by changing the
core width of a domain wall. For the linear model, the core width of a domain wall is of order
$\epsilon$. Accordingly, for the functional $\tilde{E}_\epsilon$,
the expansion that corresponds to \eqref{DKMO_expansion} is of the form
\[
\inf_{m \in M(a, d)} \tilde{E}_\epsilon(m_1) = \sum_{n = 1}^N \frac{\pi (d_n - \cos \alpha)^2}{2\log \frac{1}{\epsilon}} + \frac{\tilde{\mathbb{W}}(a, d)}{\left(\log \frac{1}{\epsilon}\right)^2} + o\left(\frac{1}{\left(\log \frac{1}{\epsilon}\right)^2}\right)
\]
as $\epsilon \searrow 0$. Here, $\tilde{\mathbb{W}}$ is nearly the
same as the function $\mathbb{W}$ from Theorem \ref{thm1},
except that it may differ by a number depending only on $N$ and $d$.
That is, there exists a function $\tilde{e} : \{\pm 1\} \to \R$ such
that
\begin{multline*}
\tilde{\mathbb{W}}(a, d) = \sum_{n = 1}^N \tilde{e}(d_n) - \frac{\pi}{2} \sum_{n = 1}^N \gamma_n^2 \log (2 - 2a_n^2)
- \frac{\pi}{2} \sum_{n = 1}^N \sum_{k \not= n} \gamma_k \gamma_n \log \left(\frac{1 + \sqrt{1 - \varrho(a_k, a_n)^2}}{\varrho(a_k, a_n)}\right).
\end{multline*}
As for the full model, we may regard $\tilde{e}(\pm 1)$ as the core
energy of a transition of sign $\pm 1$. Our analysis does not give
an explicit expression, but for variational principles where
the number and signs of the N\'eel walls does not change, this
part of the limiting energy is irrelevant.

The formula can be proved with the same arguments as in the proof of
Theorem \ref{thm1} below, although the linearity allows a few shortcuts.
Therefore, we do not give a separate proof but leave it to the reader
to make the necessary changes.

As a consequence, the linear model does not describe the interaction
between N\'eel walls accurately, but the discrepancy is easily
corrected by adjusting the core width (i.e., replacing $\epsilon$
by $\delta$). Although we study only the energy of interacting
N\'eel walls in this paper, the analogy to the theory of
Ginzburg-Landau vortices (see Sect. \ref{sect:Ginzburg-Landau})
suggests that the same may be true for the dynamics of N\'eel
walls. The simplified model may therefore be useful as a test
case for future analysis, or, with the necessary care and the
appropriate corrections, even be used for quantitative predictions.

\subsection{Comparison with Ginzburg-Landau vortices} \label{sect:Ginzburg-Landau}

The interaction between topological singularities has been intensively
studied in the last two decades in the context of Ginzburg-Landau
problems. The work was pioneered by Bethuel, Brezis, and H\'elein
\cite{Bethuel-Brezis-Helein:93, BBH}, and an overview of later
developments can be found in a book by Sandier and Serfaty \cite{SSbook}.
These problems are designed to describe
phenomena in superconductors and Bose-Einstein condensates, and
a simple model that captures some of the main features is based
on the functionals
\begin{equation} \label{eqn:Ginzburg-Landau}
G_\epsilon(f) = \int_\Omega \left(\frac{1}{2} |\nabla f|^2 + \frac{1}{4\epsilon^2} (1 - |f|^2)^2\right) \, dx
\end{equation}
for a domain $\Omega \subset \R^2$ and a function $f : \Omega \to \R^2$.
We identify $\R^2$ with $\C$. Then in the limit $\epsilon \searrow 0$,
the analysis in the aforementioned papers leads to a limiting
function $f : \Omega \to \C$ of the form
\[
f(z) = e^{i \theta(z)} \prod_{n = 1}^N \left(\frac{z - a_n}{|z - a_n|}\right)^{d_n}
\]
for certain points $a_1, \ldots, a_N \in \Omega$, integers
$d_1, \ldots, d_N \in \Z \backslash \{0\}$, and a function
$\theta : \Omega \to \R$. The renormalised energy
\[
\liminf_{r \searrow 0} \left(\frac{1}{2} \int_{\Omega \backslash \bigcup_{n = 1}^N B_r(a_n)} |\nabla f|^2\, dx - {\pi} \log \frac{1}{r} \sum_{n = 1}^N d_n^2\right)
\]
appears in a result similar to Theorem \ref{thm1} (together with an
additional term describing the core energy).
Here $B_r(a)$ stands for the open ball of radius $r$ and centre $a$.
We have topological singularities at $a_1, \ldots, a_n$ with vortex
structures and with topological degrees $d_1, \ldots, d_n$.
These data are encoded in the distributional Jacobian
\[
J(f) = \frac{1}{2} \curl (f^\perp \cdot \nabla f),
\]
where $f^\perp=(-f_2, f_1)$.

The renormalised energy gives information about the vortex positions
in equilibrium, but it is also important for their dynamics. Typically,
if $f$ evolves by a variational equation derived from $G_\epsilon$, then
on an appropriate time scale, the limiting motion law for the vortices
(as $\epsilon \searrow 0$) is described by an analogous equation derived
from the renormalised energy. This is true for gradient flows
\cite{MR1376654, MR1376655, MR1629646}, Schr\"odinger type equations
\cite{MR1623410, MR1699965}, as well as nonlinear wave equations
\cite{MR1676761, MR1710937}.

It has been observed before that certain phenomena from micromagnetics
give rise to similar models
\cite{Hang-Lin:01, Kurzke-Melcher-Moser-Spirn, Kurzke-2006, Moser, Moser_ARMA}.
The connection to our model is less obvious, but can be seen once
we show that under assumptions such as in Theorem \ref{thm1},
we obtain a limiting function from the rescaled stray field potential $(\log \frac 1 \delta) U(m)$
of the form
\[
u_{a, d}^*(x) = u_*(x) + \sum_{n = 1}^N \gamma_n \left(\arctan\left(\frac{x_2}{x_1 - a_n}\right) - \frac{\pi (x_1 - a_n)}{2|x_1 - a_n|}\right)
\]
for some $a \in A_N$ and $d\in \{\pm 1\}^N$ and a harmonic function $u_* : \R_+^2 \to \R$
that is smooth near $(-1, 1) \times \{0\}$ (see Sect. \ref{sect:reduced}
for details). Examining $u_{a, d}^*$ near the point $(a_n, 0)\in \R^2$,
we see that it behaves like the phase of a vortex in the upper
half-plane, up to the constant $\gamma_n$. The expression
\[
\liminf_{r \searrow 0} \left(\frac{1}{2} \int_{\R_+^2 \backslash \bigcup_{n = 1}^N B_r(a_n, 0)} |\nabla u_{a, d}^*|^2 \, dx - \frac{\pi}{2} \log \frac{1}{r} \sum_{n = 1}^N \gamma_n^2\right)
\]
also plays a role, although for our problem, it only
accounts for a part of renormalised energy in Theorem \ref{thm1}
(even after adding the core energy).
In fact, a N\'eel wall
at $a_n$ behaves like a vortex of ``degree'' $\gamma_n$ in many
respects, which is why the toolbox from the theory of
Ginzburg-Landau vortices is very useful for the analysis.

There are, however, significant differences to Ginzburg-Landau vortices
as well. A N\'eel wall is a two-length scale object, comprising
a core and two tails, each with its own characteristic length.
In contrast, in the standard Ginzburg-Landau problem, a vortex
has a single length scale characterising its core and the renormalised
energy between the vortices comes essentially from the interaction
of its out-of-core structure. For N\'eel walls, we have a
renormalised energy consisting of two parts. The interaction between
the tails of two walls is similar to the interaction between
Ginzburg-Landau vortices and gives rise to the above expression.
But in addition, we have an interaction between the core of one
wall and the tail of another, which is a novel feature for
Ginzburg-Landau type systems. This interaction is responsible for
the fact that we have attraction for walls of the same sign and
repulsion in the case of opposite signs, whereas for Ginzburg-Landau
vortices, we have attraction for degrees of opposite signs and
repulsion for degrees of the same sign.
Finally, our ``degree'' $\gamma_n$ is not quantised in the same
way as the degree of Ginzburg-Landau vortices. It does take
only two values ($\pm 1 - \cos \alpha$), but these depend on the
choice of the angle $\alpha$ and are not topological invariants. As a
consequence, the Jacobian becomes a much less powerful tool.
To overcome these difficulties, we use duality arguments and
``logarithmically failing" interpolation inequalities (see \cite{DKO, IO}),
$\Gamma$-convergence methods, and refined elliptic estimates.

In our model, the magnetostatic energy, being of order $O(1/|\log \delta|)$,
dominates the higher order exchange energy (of order $O(1/(\log \delta)^2)$
for small values of $\epsilon$. This is in contrast to most
Ginzburg-Landau systems, where the highest order term is dominant.
This is the case for the functionals $G_\epsilon$ in \eqref{eqn:Ginzburg-Landau},
but also for similar models coming from micromagnetics. For example,
the model for boundary vortices studied by Kurzke \cite{Kurzke-2006}
contains a term coming from the exchange energy and one coming from the
magnetostatic energy as well, but in the analysis, the roles of the
two are reversed relative to the model for N\'eel walls.

\subsection{Notation}

We now introduce some notation that we will use frequently
throughout this paper. As mentioned previously, we define
$\delta = \epsilon \log \frac{1}{\epsilon}$.
This is the scale of the N\'eel walls' typical core width.

If $x_0 \in \R^2$ and $r > 0$, then
we write $B_r(x_0)$ for the open ball in $\R^2$ of radius
$r$ centred at $x_0$. Furthermore, we write $B_r^+(x_0) = B_r(x_0) \cap \R_+^2$.
For a set $S \subset \R^2$, we also use the notation $\partial^+S = \partial S \cap \R_+^2$.

For a vector $\xi = (\xi_1, \xi_2) \in \R^2$, we write $\xi^\perp = (-\xi_2, \xi_1)$.
If $f : \Omega \to \R$ is a function on a domain $\Omega \subset \R^2$
with gradient $\nabla f = (\dd{f}{x_1}, \dd{f}{x_2})$, then
$\nabla^\perp f = (-\dd{f}{x_2}, \dd{f}{x_1})$.

For $a \in A_N$, we define
\[
\rho(a) = \frac{1}{2} \min\{2a_1 + 2, a_2 - a_1, \ldots, a_N - a_{N - 1}, 2 - 2a_N\}.
\]
Thus this is a quantity that controls the distance between two
points of $a$ and the distance to the boundary.
For $r > 0$, also define
\[
B_r^*(a) = \bigcup_{n = 1}^N B_r^+(a_n, 0)
\]
and
\[
\Omega_r(a) = \R_+^2 \backslash B_r^*(a).
\]

Given a function $f : \R_+^2 \to \R$ with a well-defined trace on $(-1, 1) \times \{0\}$,
we often use the
shorthand notation $f'$ for $\dd{f}{x_1}(\blank, 0)$ and
\[
\int_{-1}^1 f \, dx_1 = \int_{-1}^1 f(x_1, 0) \, dx_1.
\]

In addition to the space $\dot{W}^{1, 2}(\R_+^2)$ introduced in
Sect. \ref{sec:model}, we also define $\dot{W}_*^{1, 2}(\R_+^2; a)$
for $a \in A_N$, which is the space of all $u \in \bigcap_{r > 0} \dot{W}^{1, 2}(\Omega_r(a))$
such that
\[
\sup_{r > 0} \frac{\|\nabla u\|_{L^2(\Omega_r(a))}^2}{\log \left(\frac{1}{r} + 2\right)} < \infty.
\]
If $b \in (-1, 1)$ is a single point, then $\dot{W}_*^{1, 2}(\R_+^2; b)$
is defined similarly.

\section{The renormalised energy} \label{sect:reduced}

When we study minimisers of $E_\epsilon$ in $M(a, d)$ and
let $\epsilon$ tend to $0$, then we expect the suitably rescaled
stray field potential to converge to a harmonic function with
specific boundary data depending on $a$ and $d$. We compute this
function here, in order to obtain the limiting magnetostatic
energy. This corresponds to the sum of the tail self-energies and
the contributions of the core-tail, tail-tail, and tail-boundary interactions. As a side
product, we will also obtain information about the expected logarithmic profile of the N\'eel walls.

\subsection{The limiting rescaled stray-field potential}

Fix $a \in A_N$ and $d \in \{\pm 1\}^N$. We recall that $$\textrm{$\gamma_n = d_n - \cos \alpha$ \quad
for $n = 1, \ldots, N$}$$ and we denote
\[
\sigma_n = \frac{\pi}{2} \left(\sum_{k = n + 1}^N \gamma_k - \sum_{k = 1}^n \gamma_k\right), \quad n = 0, \ldots, N.
\]
We look for a solution of the
following boundary value problem:
\begin{alignat}{2}
\Delta u_{a, d}^* & = 0 &\quad& \text{in $\R_+^2$}, \label{eqn:reduced_harmonic} \\
u_{a, d}^* & = \sigma_0 && \text{on $(-1, a_1) \times \{0\}$}, \label{eqn:reduced_boundary_left} \\
u_{a, d}^* & = \sigma_n && \text{on $(a_n, a_{n + 1})$ for $n = 1, \ldots, N - 1$}, \\
u_{a, d}^* & = \sigma_N && \text{on $(a_N, 1) \times \{0\}$}, \label{eqn:reduced_boundary_right} \\
\dd{u_{a, d}^*}{x_2} & = 0 && \text{on $(-\infty, -1) \times \{0\}$ and on $(1, \infty) \times \{0\}$}. \label{eqn:reduced_Neumann}
\end{alignat}
The boundary data are chosen so that we have a jump of
size $-\pi \gamma_n$ at $a_n$ for $n = 1, \ldots, N$.
We also require that $u_{a, d}^*\in \dot{W}_*^{1, 2}(\R_+^2; a)$ so that the boundary conditions make sense.
However, the Dirichlet energy cannot be finite on all of $\R_+^2$, since
any solution $u_{a, d}^*$ will behave similarly to the function\footnote{This function satisfies \eqref{eqn:reduced_harmonic}--\eqref{eqn:reduced_boundary_right}, but not
\eqref{eqn:reduced_Neumann}. It does not belong to $\dot{W}_*^{1, 2}(\R_+^2; a)$ due
to its behaviour at $\infty$.}
\[
\sum_{n = 1}^N \gamma_n \left(\arctan \left(\frac{x_2}{x_1 - a_n}\right) - \frac{\pi(x_1 - a_n)}{2|x_1 - a_n|}\right)
\]
near the boundary $(-1, 1) \times \{0\}$ (see Proposition \ref{prop:minimiser} below).
But we can compensate by considering the expression
\be
\label{energy_rescaled_stray}
E_{a, d}^*(u_{a, d}^*) = \frac{1}{2} \liminf_{r \searrow 0} \left(\int_{\Omega_r(a)} |\nabla u_{a, d}^*|^2 \, dx - \pi \log \frac{1}{r}\sum_{n = 1}^N \gamma_n^2\right),
\ee
and this will be part of the limiting energy.
We will see that the unique solution $u_{a, d}^*$ of
\eqref{eqn:reduced_harmonic}--\eqref{eqn:reduced_Neumann} is
the minimizer of $E_{a, d}^*$ and corresponds to the limit as
$\eps\searrow 0$ of the rescaled stray field potential
$(\log \frac 1 \delta) U(m_\eps)$ associated
to the magnetisation $m_\eps$ with walls at $a_1, \ldots, a_N$
with prescribed signs $d_1, \ldots, d_N$ (see Proposition \ref{prop:minimiser} and Theorem \ref{thm:core_convergence}).

In order to solve \eqref{eqn:reduced_harmonic}--\eqref{eqn:reduced_Neumann},
we first study the simpler problem
\begin{alignat}{2}
\Delta u  & = 0 &\quad & \text{in $\R_+^2$}, \label{P1}\\
u & = \frac{\pi}{2} && \text{on $(-1, 0) \times \{0\}$}, \label{P2}\\
u & = -\frac{\pi}{2} && \text{on $(0, 1) \times \{0\}$}, \label{P3}\\
\dd{u}{x_2} & = 0 && \text{on $(-\infty, -1) \times \{0\}$ and $(1, \infty) \times \{0\}$}. \label{P4}
\end{alignat}
We can obtain a solution $u_{a, d}^*$ of \eqref{eqn:reduced_harmonic}--\eqref{eqn:reduced_Neumann} by a linear
combination of solutions to problems of the type \eqref{P1}--\eqref{P4} (see Proposition \ref{prop:minimiser}).

We first construct an explicit solution, 
using the fact that harmonic functions remain
harmonic upon precomposition with a conformal
map. We identify $\R^2$ with $\C$. Consider the following domain in the complex plane $\C$:
\[ \label{def:S}
S = \set{w_1 + iw_2 \in \C}{w_1 > 0 \text{ and } 0 < w_2 < \pi}.
\]
Also consider the conformal map $F : S \to \R_+^2$ with
\[
F(w) = - \frac{1}{\cosh w}, \quad w \in S.
\]
Extend $F$ continuously to the boundary $\partial S\setminus \{\frac{\pi i} 2\}$.
Assuming that $u$ solves \eqref{P1}--\eqref{P4}, set
$$\hat{u} = u \circ F.$$ Then $\hat{u}$ solves the
boundary value problem
\begin{alignat*}{2}
\Delta \hat{u}  & = 0 &\quad& \text{in $S$}, \\
\hat{u} & = \frac{\pi}{2} &&  \text{on $(0, \infty)$}, \\
\hat{u} & = - \frac{\pi}{2} && \text{on $\pi i + (0, \infty)$}, \\
\dd{\hat{u}}{x_1} & = 0 && \text{on $i(0,\pi)$}.
\end{alignat*}
This problem has an obvious solution,
\[
\hat{u}(w) = \frac{\pi}{2} - \Im w,
\]
with $\Im w=w_2$ for $w=w_1+i w_2$. Thus we obtain a solution of \eqref{P1}--\eqref{P4} by 
\[
u(z) = \hat{u}(F^{-1}(z)) = \frac{\pi}{2} - \Im F^{-1}(z), \quad z\in \R^2_+,
\]
which can be written as
\be
\label{uniq_sol}
u = \Im f \quad \text{for} \quad f(z) = \frac{\pi i}{2} - F^{-1}(z), \quad z\in \R^2_+,
\ee
where $f$ is an holomorphic function. Since
$\lim_{|z| \to \infty} F^{-1}(z)=\frac{\pi i}{2}$, we conclude that
$\lim_{|x| \to \infty} u(x) = 0$.

\begin{proposition} \label{pro:uniq}
The function $u$ from \eqref{uniq_sol} is the unique solution
of \eqref{P1}--\eqref{P4} in $\dot{W}_*^{1, 2}(\R_+^2; 0)$.
It satisfies $|u| \le \frac{\pi}{2}$ in $\R_+^2$ and
$\lim_{|x|\to \infty} u(x) = 0$. Moreover, it is odd in
$x_1$, that is, $u(x_1, x_2) = - u(-x_1, x_2)$ for every $x \in \R_+^2$.
Furthermore, there exists a constant $C$ such that
\begin{equation} \label{eqn:gradient_core}
\left|\nabla u(x) - \frac{x^\perp}{|x|^2}\right| \le C|x|
\end{equation}
for all $x \in B_{1/2}^+(0)$.
\end{proposition}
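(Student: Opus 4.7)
The plan is to read off every asserted property directly from the explicit formula $u = \Im f$ with $f(z) = \pi i/2 - F^{-1}(z)$, and to settle uniqueness by an independent energy argument based on odd reflection across the Dirichlet part of the boundary.

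\textbf{Bounds, decay, and symmetry.} I would first observe that $F^{-1}$ is a conformal bijection from $\R^2_+$ onto the strip $S$, so $\Im F^{-1}(z) \in (0, \pi)$ and hence $|u| \le \pi/2$ immediately. Since the only zero of $\cosh$ in $\overline{S}$ is $i\pi/2$, the relation $\cosh(F^{-1}(z)) = -1/z$ forces $F^{-1}(z) \to i\pi/2$ as $|z| \to \infty$, giving $u(x) \to 0$. For the odd symmetry in $x_1$ I would use that $w \mapsto \bar{w} + i\pi$ is an involution of $S$ with $\cosh(\bar w + i\pi) = -\overline{\cosh w}$, which yields $F^{-1}(-\bar z) = \overline{F^{-1}(z)} + i\pi$ and hence $u(-x_1, x_2) = -u(x_1, x_2)$.

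\textbf{Uniqueness.} Given two solutions $u_1, u_2 \in \dot{W}^{1,2}_*(\R^2_+; 0)$, their difference $v = u_1 - u_2$ is harmonic in $\R^2_+$ with zero Dirichlet data on $((-1, 0) \cup (0, 1)) \times \{0\}$ and zero Neumann data on $((-\infty, -1) \cup (1, \infty)) \times \{0\}$. For small $\rho > 0$ I would extend $v$ by odd reflection across the Dirichlet boundary to obtain a harmonic function on the punctured disk $B_\rho(0) \setminus \{0\}$, which then admits a decomposition $v = c \log |z| + h(z)$ with $h$ harmonic on $B_\rho(0)$. Since $c \log |z|$ is incompatible with the vanishing trace on the real axis, $c = 0$ and $v$ extends smoothly across the origin. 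Consequently $v$ has finite total Dirichlet energy on $\R^2_+$ (bounded near $0$ by smoothness, elsewhere by the growth condition defining $\dot{W}^{1,2}_*$), and integration by parts with the mixed boundary conditions gives $\int_{\R^2_+} |\nabla v|^2 = 0$, so $v \equiv 0$.

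\textbf{Gradient estimate.} Using $\cosh w = -1/z$ and $F'(w) = \sinh w / \cosh^2 w$, I would derive the closed-form expression
\[
f'(z) = -\frac{1}{F'(F^{-1}(z))} = \frac{1}{z\sqrt{1 - z^2}},
\]
with the branch of $\sqrt{1 - z^2}$ chosen so that it tends to $1$ as $z \to 0$. Taylor expanding gives $f'(z) = 1/z + z/2 + O(|z|^3)$ for $|z| \le 1/2$. Combining the Cauchy--Riemann identities $\nabla u = (\Im f', \Re f')$ with the fact that $1/z = (x_1 - i x_2)/|x|^2$ contributes exactly $x^\perp/|x|^2$ to $\nabla u$ reduces \eqref{eqn:gradient_core} to a routine remainder bound. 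The most delicate point is the uniqueness step, where one has to rule out a genuine logarithmic singularity of $v$ at the origin in a space which a priori tolerates exactly such growth.
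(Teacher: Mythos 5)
Your proposal is correct in substance but takes a genuinely different route from the paper at several points, and it omits one item that the statement requires. The paper proves the odd symmetry as a \emph{consequence} of uniqueness (since $x \mapsto -u(-x_1, x_2)$ solves the same boundary value problem), whereas you derive it directly from the involution $w \mapsto \bar{w} + i\pi$ of $S$; both work, and yours has the advantage of not depending on the uniqueness step. For \eqref{eqn:gradient_core} the paper argues softly: the function $\tilde{v} = u - \arctan(x_2/x_1) + \pi x_1/(2|x_1|)$ is harmonic with zero trace on $(-1,1)\times\{0\}$, hence smooth up to the boundary near $0$, and the odd symmetry forces $\nabla \tilde{v}(0) = 0$. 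Your explicit expansion of $f'(z) = 1/(z\sqrt{1-z^2})$ reaches the same conclusion by computation (and the paper itself derives this formula for $f'$ later, in the proof of Lemma \ref{cross_product}), so this is a legitimate alternative. For uniqueness, both arguments are energy arguments; where the paper invokes ``standard regularity theory'' to upgrade $v$ from $\dot{W}^{1,2}_*$ to $\dot{W}^{1,2}$, you make the step at the origin explicit via odd reflection and the decomposition $v = c\log|z| + h$. To make that decomposition airtight you should note that the negative-power terms in the Laurent-type expansion of a harmonic function on the punctured disk are excluded precisely by the logarithmic energy-growth bound defining $\dot{W}^{1,2}_*$; also, once you have reflected, the extended function is odd in $x_2$ while $\log|z|$ is even, which kills $c$ immediately without appealing to the trace.

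The one genuine omission: the proposition asserts that $u$ \emph{is} a solution in $\dot{W}^{1,2}_*(\R_+^2; 0)$, so you must verify $\int_{\R_+^2 \setminus B_r(0)} |\nabla u|^2\,dx = O(\log\frac{1}{r})$. The paper does this via conformal invariance of the Dirichlet integral, computing the preimage $F^{-1}(\R_+^2 \setminus B_r(0))$ inside the strip $S$ using the identity $|\cosh w|^2 = \frac{1}{2}\cosh(2\Re w) + \frac{1}{2}\cos(2\Im w)$. In your setup this follows instead from the formula $f'(z) = 1/(z\sqrt{1-z^2})$: one has $|\nabla u| = |f'|$, which is $O(|z|^{-1})$ near $0$, $O(|z \mp 1|^{-1/2})$ near $\pm 1$, and $O(|z|^{-2})$ at infinity, giving the required logarithmic bound; you should add this check.
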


\begin{proof}
It is clear from the construction that $|u| \le \frac{\pi}{2}$ and
$\lim_{|x| \to \infty} u(x) = 0$.
Since the Dirichlet energy is conformally invariant, we have
\[
\int_{R_+^2 \backslash B_r(0)} |\nabla u|^2 \, dz = \int_{F^{-1}(R_+^2 \backslash B_r(0))} |\nabla \hat{u}|^2 \, dw.
\]
Note that
\begin{equation} \label{eqn:cosh}
\begin{split}
|\cosh w|^2 & = \frac{1}{4} \left(e^w + e^{-w}\right) \left(e^{\bar{w}} + e^{-\bar{w}}\right) \\
& = \frac{1}{4} \left(e^{2\Re w} + e^{-2\Re w} + e^{2i\Im w} + e^{-2i\Im w}\right) \\
& = \frac{1}{2} \cosh (2\Re w) + \frac{1}{2} \cos (2 \Im w) \quad \textrm{ for $w \in \C$.}
\end{split}
\end{equation}
Thus
\[
F^{-1}(R_+^2 \backslash B_r(0)) \subset \set{w \in S}{\Re w < \frac{1}{2} \arcosh \left(\frac{2}{r^2} + 1\right)}.
\]
It follows that
\[
\int_{\R_+^2 \backslash B_r(0)} |\nabla u|^2 \, dz \le \frac{\pi}{2} \arcosh \left(\frac{2}{r^2} + 1\right).
\]
In particular, we have $u \in \dot{W}_*^{1, 2}(\R_+^2; 0)$.

If $\tilde{u}\in \dot{W}_*^{1, 2}(\R_+^2; 0)$ is another solution of \eqref{P1}--\eqref{P4},
then the difference
$v = u - \tilde{u}$ belongs to $\dot{W}_*^{1, 2}(\R_+^2; 0)$
and thus to $W^{1, p}(B_1^+(0)) \cap \dot{W}^{1, 2}(\R_+^2 \backslash B_{1/2}(0))$
for any $p \in [1, 2)$.
Furthermore, it is harmonic in $\R_+^2$  and satisfies
$v=0$ on $(-1,1)\times\{0\}$ and $\dd{v}{x_2} = 0$ on
$(-\infty, -1) \times \{0\}$ and on $(1, \infty) \times \{0\}$.
Standard regularity theory then implies that $v \in\dot{W}^{1, 2}(\R_+^2)$,
and with an integration by parts, we obtain
\[
\int_{\R_+^2} |\nabla v|^2 \, dx = 0.
\]
Hence $u$ is the unique solution of \eqref{P1}--\eqref{P4} in
$\dot{W}_*^{1, 2}(\R_+^2; 0)$.

The odd symmetry of $u$ is a consequence of the uniqueness, as
$x\mapsto -u(-x_1, x_2)$ is another solution of the boundary value problem.

Finally, we consider the function
\[
\tilde{v}(x) = u(x) - \arctan \left(\frac{x_2}{x_1}\right) + \frac{\pi x_1}{2|x_1|},
\]
which is harmonic in $\R_+^2$ as well with $\tilde{v} = 0$ on
$(-1, 1) \times\{0\}$. Invoking standard regularity theory again,
we conclude that $\tilde{v} \in C^\infty(\overline{B_{1/2}^+(0)})$.
By the odd symmetry, we have $\nabla \tilde{v}(0) = 0$.
Hence we obtain inequality \eqref{eqn:gradient_core}.
\end{proof}

\begin{definition}
For every $b \in (-1, 1)$, we introduce the M\"obius transform 
\be
\label{Mobius}
\Phi_b(z) = \frac{z + b}{1 + bz}, \quad z \in \C,
\ee
and its inverse $\Phi_b^{-1} = \Phi_{-b}$. We also define $u_b:\R_+^2\to \R$ by
\be
\label{u_b}
u_b = u \circ \Phi_{-b}\quad \textrm{in}\quad \R_+^2,
\ee
which, by Proposition \ref{pro:uniq}, is the unique solution of the boundary value problem
\begin{alignat*}{2}
\Delta u_b & = 0 & \quad & \text{in $\R_+^2$}, \\
u_b & = \frac{\pi}{2} && \text{on $(-1, b) \times \{0\}$}, \\
u_b & = - \frac{\pi}{2} && \text{on $(b, 1) \times \{0\}$}, \\
\dd{u_b}{x_2} & = 0 && \text{on $(-\infty, -1) \times \{0\}$ and on $(1, \infty) \times \{0\}$},
\end{alignat*}
in the space $\dot{W}_*^{1, 2}(\R_+^2; b)$.
\end{definition}

Now we can also construct a solution of
\eqref{eqn:reduced_harmonic}--\eqref{eqn:reduced_Neumann} by superposition.

\begin{proposition} \label{prop:minimiser}
The function $u_{a, d}^*$, defined by
\be
\label{stray_eq}
u_{a, d}^* = \sum_{n = 1}^N \gamma_n u_{a_n},
\ee
is the unique minimiser of $E_{a, d}^*$ among all $u \in \dot{W}_*^{1, 2}(\R_+^2; a)$ 
satisfying \eqref{eqn:reduced_boundary_left}--\eqref{eqn:reduced_boundary_right}. Moreover, $u_{a, d}^*$ is the unique solution of
\eqref{eqn:reduced_harmonic}--\eqref{eqn:reduced_Neumann} in $\dot{W}_*^{1, 2}(\R_+^2; a)$.
\end{proposition}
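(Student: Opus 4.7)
The plan is to construct $u_{a,d}^*$ from the formula \eqref{stray_eq} by verifying it satisfies the boundary-value problem, and then to establish the two uniqueness claims in turn. For the construction, one checks directly that $u_{a,d}^* = \sum_n \gamma_n u_{a_n}$ is harmonic and satisfies the Neumann condition \eqref{eqn:reduced_Neumann} (both inherited from each summand), while on $(a_j, a_{j+1}) \times \{0\}$ its Dirichlet trace evaluates to $\tfrac{\pi}{2}(\sum_{k > j} \gamma_k - \sum_{k \le j} \gamma_k) = \sigma_j$, matching the prescribed data (the intervals adjacent to $\pm 1$ are analogous). Each $u_{a_n}$ belongs to $\dot{W}_*^{1,2}(\R_+^2; a_n)$ by Proposition \ref{pro:uniq} pulled back via the M\"obius map $\Phi_{a_n}$, and since its only singularity is at $(a_n, 0)$, it follows that $u_{a,d}^* \in \dot{W}_*^{1,2}(\R_+^2; a)$.

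For uniqueness of the BVP solution, if $\tilde u$ is another solution then $v := u_{a,d}^* - \tilde u \in \dot{W}_*^{1,2}(\R_+^2; a)$ is harmonic with homogeneous Dirichlet data on $(-1,1) \times \{0\} \setminus \{a_1, \ldots, a_N\}$ and homogeneous Neumann data on $(\R \setminus [-1,1]) \times \{0\}$. Odd reflection across the $x_1$-axis extends $v$ harmonically to a punctured neighbourhood of each $(a_n, 0)$; the $\dot{W}_*^{1,2}$ growth bound permits at most a $c \log|x - (a_n, 0)|$ singularity, but this even term is excluded by the odd symmetry of the reflection, so $v$ extends harmonically through each $(a_n, 0)$ and lies globally in $\dot{W}^{1,2}(\R_+^2)$. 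An integration by parts against $v$, using the boundary conditions, then forces $\int_{\R_+^2}|\nabla v|^2 = 0$, hence $v \equiv 0$.

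For minimality, fix an admissible competitor $u$ and set $w := u - u_{a,d}^*$. Expanding $\int_{\Omega_r(a)}|\nabla u|^2$ and using Green's identity together with $\Delta u_{a,d}^* = 0$ and the vanishing of boundary contributions on $\{x_2 = 0\}$ (where either $w = 0$ or $\partial_{x_2} u_{a,d}^* = 0$) reduces the cross term to $-\sum_n \int_{\partial^+ B_r(a_n, 0)} w \, \partial_{r_n} u_{a,d}^*$, with $\partial_{r_n}$ the outward radial derivative from $(a_n, 0)$. The decisive observation, obtained by transporting \eqref{eqn:gradient_core} via $\Phi_{a_n}$, is that the leading singular part of $\nabla u_{a_n}$ is purely azimuthal around $(a_n, 0)$, so $\partial_{r_n} u_{a,d}^*$ stays \emph{bounded} near $(a_n, 0)$ despite the logarithmic divergence of $|\nabla u_{a,d}^*|$. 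A Poincar\'e-trace estimate on the half-annulus $B_{2r}^+(a_n, 0) \setminus B_r^+(a_n, 0)$---exploiting $w = 0$ on the $x_1$-axis there and the $\dot{W}_*^{1,2}$ bound on $w$---gives $\int_{\partial^+ B_r(a_n, 0)} |w| = O(r \log^{1/2}(1/r)) = o(1)$, so the cross term drops out in the $\liminf$. Consequently
\[
E_{a,d}^*(u) - E_{a,d}^*(u_{a,d}^*) = \tfrac{1}{2}\liminf_{r \searrow 0} \int_{\Omega_r(a)}|\nabla w|^2 \geq 0,
\]
and equality (since the integrand is nonnegative and $r \mapsto \int_{\Omega_r(a)}|\nabla w|^2$ is monotone in $r$) forces $\nabla w \equiv 0$, hence $w \equiv 0$ by the homogeneous Dirichlet data. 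The main obstacle is precisely this cross term: a naive bound $|\partial_{r_n} u_{a,d}^*| \lesssim 1/r$ would produce only an $O(1)$ contribution on $\partial^+ B_r(a_n, 0)$, missing the needed cancellation entirely, and the argument relies crucially on the conformal arctan-like structure of $u_{a_n}$ to produce a bounded radial derivative.
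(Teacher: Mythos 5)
Your proposal is correct and follows essentially the same route as the paper: verify the boundary data of $\sum_n \gamma_n u_{a_n}$ directly, expand $\int_{\Omega_r(a)}|\nabla u|^2$ via Green's identity, and kill the cross term using the fact (from \eqref{eqn:gradient_core} transported by $\Phi_{a_n}$) that the singular part of $\nabla u_{a,d}^*$ is purely azimuthal, so its normal derivative on $\partial^+B_r(a_n,0)$ is bounded; the BVP uniqueness likewise reduces, as in the paper, to the single-wall argument of Proposition \ref{pro:uniq}. Your write-up is in fact slightly more explicit than the paper's at two points it leaves terse — the $O(r\log^{1/2}(1/r))$ trace estimate for $\int_{\partial^+B_r}|u-u_{a,d}^*|\,d\sigma$ and the odd-reflection/parity exclusion of the logarithmic singularity — both of which are correct fillings of those gaps.
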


\begin{proof}
By Proposition \ref{pro:uniq}, we know that $u_{a, d}^*\in \dot{W}_*^{1, 2}(\R_+^2; a)$, and it
satisfies \eqref{eqn:reduced_boundary_left}--\eqref{eqn:reduced_boundary_right}. Let $u$ be another function with these properties. 
Since $u_{a, d}^*$ is harmonic in $\R^2_+$, integration by parts leads to
\begin{align*}
\int_{\Omega_r(a)} \left|\nabla u-\nabla u_{a, d}^*\right|^2\, dx&=\int_{\Omega_r(a)} |\nabla u|^2\, dx- \int_{\Omega_r(a)} |\nabla u_{a, d}^*|^2\, dx 
-2\int_{\Omega_r(a)} \left(\nabla u-\nabla u_{a, d}^*\right)\cdot \nabla u_{a, d}^*\, dx\\
&=\int_{\Omega_r(a)} |\nabla u|^2\, dx- \int_{\Omega_r(a)} |\nabla u_{a, d}^*|^2\, dx 
-2\int_{\partial \Omega_r(a)} \left(u-u_{a, d}^*\right)\nu \cdot \nabla u_{a, d}^*\, d\sigma.
\end{align*}
Using \eqref{eqn:gradient_core}, we find that there exists a constant $C$ such that for
any $n = 1, \ldots, N$ and for $x \in B_{\rho(a)}^+(a_n, 0)$,
\[
\left|\dd{u_{a, d}^*}{x_1}(x) + \frac{\gamma_n x_2}{(x_1 - a_n)^2 + x_2^2}\right| + \left|\dd{u_{a, d}^*}{x_2}(x) - \frac{\gamma_n (x_1 - a_n)}{(x_1 - a_n)^2 + x_2^2}\right| \le C.
\]
Hence the boundary integral will tend to $0$ when we let $r \searrow 0$.
Therefore,
\[
E_{a, d}^*(u_{a, d}^*)=E_{a, d}^*(u)-\frac 1 2 \lim_{r\searrow 0}\int_{\Omega_r(a)} \left|\nabla u-\nabla u_{a, d}^*\right|^2\, dx.
\]
The limit on the right hand side exists, because the quantity is monotone in $r$.

This implies that $u_{a, d}^*$ is the unique minimizer of $E_{a, d}^*$.
The uniqueness of solutions of \eqref{eqn:reduced_harmonic}--\eqref{eqn:reduced_Neumann} follows
with the same arguments as for Proposition~\ref{pro:uniq}.
\end{proof}

\begin{remark}
Since we have an explicit representation of $u_{a, d}^*$, an easy computation shows that the liminf in the definition of $E_{a, d}^*(u_{a, d}^*)$
is in fact a limit. By the preceding computation, the same holds for $E_{a, d}^*(u)$ for any $u\in \dot{W}_*^{1, 2}(\R_+^2; a)$ satisfying  \eqref{eqn:reduced_boundary_left}--\eqref{eqn:reduced_boundary_right}.
\end{remark}

\subsection{The energy of the limiting rescaled stray field}
\label{sec:comput_ene}

Next we want to compute the energy $E_{a, d}^*(u_{a, d}^*)$ defined in \eqref{energy_rescaled_stray}. 
Since this depends only on $a$ and $d$, we use the abbreviation
\[
W_1(a, d) = E_{a, d}^*(u_{a, d}^*).
\]
This quantity corresponds to the tail-boundary and tail-tail interaction energy.

\begin{proposition}
\label{pro:W_1}
If $\varrho$ is the metric on $(-1,1)$ defined in Section \ref{sect:result}, then
\be
\label{W_1}
W_1(a, d) = \frac{\pi}{2} \sum_{n = 1}^N \gamma_n^2 \log (2 - 2a_n^2) + \frac{\pi}{2} \sum_{k \not= n} \gamma_k \gamma_n \log \left(\frac{1 + \sqrt{1 - \varrho(a_k, a_n)^2}}{\varrho(a_k, a_n)}\right).
\ee
\end{proposition}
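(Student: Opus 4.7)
My plan is to expand $u_{a,d}^* = \sum_{n=1}^N \gamma_n u_{a_n}$ inside the quadratic form $E_{a,d}^*$ and treat the self-energy and the cross-term contributions separately. Since the $\liminf$ is in fact a limit (see the remark after Proposition \ref{prop:minimiser}), and since the cross-integrals $\int_{\R_+^2}\nabla u_{a_k}\cdot \nabla u_{a_n}\, dx$ ($k\neq n$) are absolutely convergent (their singularities lie at distinct boundary points and both gradients decay like $O(|x|^{-2})$ at infinity),
\[
W_1(a,d) = \frac{1}{2}\sum_{n=1}^N \gamma_n^2 \lim_{r\searrow 0}\left(\int_{\Omega_r(a)}|\nabla u_{a_n}|^2\, dx - \pi \log \tfrac{1}{r}\right) + \sum_{k<n}\gamma_k\gamma_n \int_{\R_+^2}\nabla u_{a_k}\cdot \nabla u_{a_n}\, dx.
\]

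For the self-energy at $a_n$ I would use the conformal invariance of the Dirichlet integral together with the identity $u_{a_n} = u\circ \Phi_{-a_n}$. A direct M\"obius computation shows that $\Phi_{-a_n}$ maps $B_r^+(a_n,0)$ to a Euclidean half-disk centred within $O(r^2)$ of the origin of radius $r/(1-a_n^2)+O(r^2)$. Using the explicit representation $u=\frac{\pi}{2}-\Im F^{-1}$ and the identity $|\cosh w|^2 = \frac{1}{2}\cosh(2\Re w) + \frac{1}{2}\cos(2\Im w)$ already exploited in the proof of Proposition \ref{pro:uniq}, a short computation yields
\[
\int_{\R_+^2\setminus B_\rho^+(0)}|\nabla u|^2\, dy = \pi \log \tfrac{2}{\rho} + o(1) \qquad (\rho \searrow 0).
\]
Noting that the other excluded half-disks $B_r^+(a_m,0)$ with $m\neq n$ contribute only $O(r^2)$ to $\int_{\Omega_r(a)}|\nabla u_{a_n}|^2\, dx$ (since $u_{a_n}$ is smooth there), the self-energy at $a_n$ equals $\frac{\pi}{2}\gamma_n^2\log(2-2a_n^2)$, which produces the first sum in \eqref{W_1}.

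For the cross-term with $k\neq n$ I would integrate by parts on $\Omega_r(a)$, using $\Delta u_{a_n}=0$, to convert the integral into one over $\partial \Omega_r(a)$, and then let $r\searrow 0$. The Neumann segments $(-\infty,-1)$ and $(1,\infty)$ give zero. Each half-circle $\partial^+ B_r(a_m,0)$ contributes $O(r)$ after decomposing $u_{a_n}=\theta_n + \tilde g_n$, where $\theta_n(x) = \arctan(x_2/(x_1-a_n)) - \pi(x_1-a_n)/(2|x_1-a_n|)$ has a purely angular gradient around $(a_n,0)$ (hence vanishing normal component on the half-circle) and $\tilde g_n$ is smooth near $(a_n,0)$; this decomposition is the pull-back by $\Phi_{-a_n}$ of the one provided by Proposition \ref{pro:uniq}. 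On the remaining pieces of $(-1,1)\times\{0\}$, $u_{a_k}$ is piecewise constant $\pm\frac{\pi}{2}$, and using the Cauchy-Riemann identity $\partial_2 u_{a_n} = \partial_1 v_{a_n}$ for the harmonic conjugate $v_{a_n}$ of $u_{a_n}$, the integrand becomes a total $x_1$-derivative, giving endpoint evaluations of $v_{a_n}$. The $-\infty$ behaviour of $v_{a_n}$ on the two sides of $a_n$ cancels symmetrically as $r\searrow 0$, and the only surviving contribution is $-\pi v_{a_n}(a_k)$.

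Finally, from $f=\frac{\pi i}{2}-F^{-1}$ in Proposition \ref{pro:uniq} one reads off $v_{a_n} = -\Re F^{-1}\circ \Phi_{-a_n}$; since $\Phi_{-a_n}(a_k)$ is a real number in $(-1,1)$ of modulus $\varrho(a_k,a_n)$ and the boundary behaviour of $F: S\to \R_+^2$ yields $\Re F^{-1}(x) = \arcosh(1/|x|)$ for $x\in (-1,1)\setminus\{0\}$, I obtain $v_{a_n}(a_k) = -\log\big((1+\sqrt{1-\varrho(a_k,a_n)^2})/\varrho(a_k,a_n)\big)$, which substituted into the cross-term sum yields the second sum in \eqref{W_1}. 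The main obstacle lies in the cross-term computation: arranging the boundary integration-by-parts so that the log-divergences of $v_{a_n}$ on the two sides of $a_n$ cancel exactly (a symmetric cancellation rather than a crude bound), and controlling the half-circle contributions via the polar decomposition of the singular part; identifying $v_{a_n}$ explicitly via $F$ and $\Phi_{-a_n}$ is what renders both tasks tractable.
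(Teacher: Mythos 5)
Your proposal is correct and follows essentially the same route as the paper: the self-energy terms are computed via conformal invariance and the explicit map $F$ on the strip (this is Lemma~\ref{H^1_norm}), and the cross terms are reduced to a boundary integral of the piecewise-constant trace of one $u_{a_k}$ against $\partial u_{a_n}/\partial x_2$, evaluated through the explicit holomorphic primitive (Lemma~\ref{cross_product}). The only cosmetic difference is that the paper first uses the group law $\Phi_b\circ\Phi_c=\Phi_{(b+c)/(1+bc)}$ to move one singularity to the origin and then integrates the primitive $\mu$ of $-\partial_2 u$, whereas you keep both singularities in place and read off the endpoint values of the harmonic conjugate $v_{a_n}=-\Re F^{-1}\circ\Phi_{-a_n}$; these are the same computation.
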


For the proof, we need the following two lemmas. First we compute the rescaled tail-tail interaction energy for two N\'eel walls located at two points $b\neq c$.

\begin{lemma}
\label{cross_product}
For all $b, c \in (-1, 1)$ with $b \not= c$, if
$u_b$ and $u_c$ are defined by \eqref{u_b}, then
\begin{equation} \label{eqn:reduced_energy1}
\int_{\R_+^2} \nabla u_b \cdot \nabla u_c \, dx =  \pi \log \left(\frac{1 + \sqrt{1 - \varrho(b, c)^2}}{\varrho(b, c)}\right).
\end{equation}
\end{lemma}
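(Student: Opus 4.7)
The plan is to reduce the computation to an elementary evaluation on the strip $S$ via two successive applications of conformal invariance of the Dirichlet integral. A direct calculation shows that $\Phi_{-c}\circ\Phi_b = \Phi_\beta$ with $\beta = \frac{b-c}{1-bc}$, whence $|\beta| = \varrho(b,c)$ and
\[
u_b\circ\Phi_b = u,\qquad u_c\circ\Phi_b = u\circ\Phi_{-c}\circ\Phi_b = u\circ\Phi_\beta = u_{-\beta}.
\]
Since $\Phi_b$ is a conformal self-map of $\R_+^2$, the mixed Dirichlet integral is invariant under precomposition with $\Phi_b$, so
\[
\int_{\R_+^2}\nabla u_b\cdot\nabla u_c\,dx \;=\; \int_{\R_+^2}\nabla u\cdot\nabla u_{-\beta}\,dy.
\]
The oddness of $u$ in $x_1$ (Proposition~\ref{pro:uniq}) together with the elementary identity $\Phi_\beta(-\bar z) = -\overline{\Phi_{-\beta}(z)}$ implies $u_{-\beta}(-y_1,y_2) = -u_\beta(y_1,y_2)$; the reflection $y_1\mapsto-y_1$ therefore lets us assume $\beta\in(0,1)$.

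For the second reduction, I would pull back via the conformal map $F:S\to\R_+^2$. Since $u\circ F(w) = \tfrac{\pi}{2}-w_2$, conformal invariance gives
\[
\int_{\R_+^2}\nabla u\cdot\nabla u_\beta\,dy \;=\; \int_S \nabla(u\circ F)\cdot\nabla(u_\beta\circ F)\,dw \;=\; -\int_S \partial_{w_2}(u_\beta\circ F)\,dw.
\]
The boundary traces of $u_\beta\circ F = u\circ\Phi_{-\beta}\circ F$ on the horizontal edges of $\partial S$ are read off directly from the explicit formulae for $F$ and $\Phi_{-\beta}$: on $\{w_2=0\}$, $F(w_1)\in(-1,0)$ and $\Phi_{-\beta}(F(w_1))\in(-1,-\beta)\subset(-1,0)$, so $u_\beta\circ F \equiv \pi/2$; on $\{w_2=\pi\}$, $F(w_1+i\pi) = 1/\cosh w_1\in(0,1)$ exceeds $\beta$ precisely when $w_1<w_{1,0}:=\arcosh(1/\beta)$, so $\Phi_{-\beta}(F)$ lies in $(0,1)$ for $w_1<w_{1,0}$ and in $(-\beta,0)$ for $w_1>w_{1,0}$, giving $u_\beta\circ F = -\pi/2$ and $+\pi/2$ respectively.

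Applying Fubini and the fundamental theorem of calculus in $w_2$ at each fixed $w_1\ne w_{1,0}$ then produces
\[
\int_0^\pi -\partial_{w_2}(u_\beta\circ F)(w_1,w_2)\,dw_2 \;=\; \begin{cases}\pi & \text{if } w_1<w_{1,0},\\[2pt] 0 & \text{if } w_1>w_{1,0},\end{cases}
\]
so the $w$-integral equals $\pi\,w_{1,0} = \pi\arcosh(1/\beta) = \pi\log\frac{1+\sqrt{1-\beta^2}}{\beta}$, which with $|\beta|=\varrho(b,c)$ is \eqref{eqn:reduced_energy1}. The main technical subtlety is justifying Fubini: the integrand $\partial_{w_2}(u_\beta\circ F)$ has a $1/|w-(w_{1,0}+i\pi)|$ singularity at the preimage of $(\beta,0)$ (inherited from the blow-up of $\nabla u_\beta$ at its N\'eel-wall point) and picks up additional weights from $F$ near the preimages of $0$ and $\pm 1$. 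A pull-back change of variables using $|F'(w)|=|F(w)|\sqrt{|1-F(w)^2|}$ shows the integrand lies in $L^1(S)$ in each of the four relevant regimes (near $y=0$, $y=\beta$, $y=\pm 1$, and $|y|\to\infty$), so Fubini applies and the above computation is rigorous.
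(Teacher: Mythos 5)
Your argument is correct, and while it begins with the same M\"obius reduction as the paper (conformal invariance under $\Phi_b$ reduces everything to $\int_{\R_+^2}\nabla u\cdot\nabla u_q\,dx$ with $|q|=\varrho(b,c)$), the evaluation of that integral proceeds along a genuinely different route. The paper integrates by parts on $(-1,1)\times\{0\}$, writing the integral as $-\PV\int_{-1}^1 u_q\,\dd{u}{x_2}\,dx_1$, and then computes $\dd{u}{x_2}(\cdot,0)$ explicitly from the holomorphic function $f$ as the derivative of $\log|x_1|-\log(1+\sqrt{1-x_1^2})$, so that the answer drops out of two principal-value integrals. You instead perform a second conformal pullback by $F$ to the strip $S$, where $u\circ F$ is affine in $w_2$, so the mixed Dirichlet integral collapses to $-\int_S\partial_{w_2}(u_\beta\circ F)\,dw$ and, via Fubini and the fundamental theorem of calculus, to $\pi$ times the length of the portion of the top edge where the (piecewise constant) trace of $u_\beta\circ F$ equals $-\pi/2$, namely $\pi\arcosh(1/\beta)$. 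What your approach buys is the complete avoidance of principal values and of the explicit boundary kernel $\dd{u}{x_2}$ — only the known Dirichlet traces of $u_\beta$ enter; the price is the integrability/Fubini check, which you handle correctly: the pointwise identity $|F'(w)|=|F(w)|\sqrt{|1-F(w)^2|}$ converts the question into the finiteness of $\int_{\R_+^2}|\nabla u_\beta(y)|\,|y|^{-1}|1-y^2|^{-1/2}\,dy$, and the local behaviours $|\nabla u_\beta|\sim|y-(\beta,0)|^{-1}$ at the wall point, $|\nabla u_\beta|\sim|y\mp1|^{-1/2}$ at the Dirichlet--Neumann corners, and $O(|y|^{-2})$ decay at infinity make each regime integrable in two dimensions. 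Both proofs are of comparable length; yours is arguably the more transparent geometric reading of where the number $\log\frac{1+\sqrt{1-\varrho^2}}{\varrho}$ comes from.
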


Then we compute the rescaled tail self-energy together with the tail-boundary interaction of a N\'eel wall located at a point $b\in (-1,1)$.

\begin{lemma}
\label{H^1_norm}
There exists a constant $C>0$ such that for every $b \in (-1, 1)$ and $r \in (0, 1 - |b|)$,
\begin{equation} \label{eqn:reduced_energy2}
\left|\int_{\R_+^2 \backslash B_r(b)} |\nabla u_b|^2 \, dx - \pi \log \left(\frac{2 - 2b^2}{r}\right)\right| \le C \left(\frac{|b|r}{1 - b^2} + \frac{r^2}{(1 - b^2 - |b|r)^2}\right).
\end{equation}
In particular, $$\lim_{r\searrow 0}\left( \int_{\R_+^2 \backslash B_r(b)} |\nabla u_b|^2 \, dx - \pi \log \frac 1 r\right)=\pi \log(2 - 2b^2).$$
\end{lemma}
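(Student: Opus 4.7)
}

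The plan is to reduce to the case $b=0$ by conformal invariance of the Dirichlet integral, and then use the explicit description of $u$ through the map $F:S\to \R_+^2$ from Proposition \ref{pro:uniq}. Concretely, I write $u_b = u\circ \Phi_{-b}$ and observe that $\Phi_{-b}\colon \R_+^2 \to \R_+^2$ is a conformal self-map with $\Phi_{-b}(b)=0$. Since Möbius transformations send circles to circles and preserve the real axis, the image $D:=\Phi_{-b}(B_r(b))$ is again a disk symmetric about $\R$, and conformal invariance yields
\[
\int_{\R_+^2\setminus B_r(b)} |\nabla u_b|^2\, dx = \int_{\R_+^2\setminus D^+} |\nabla u|^2\, dw.
\]
The strategy is then to sandwich $D$ between two concentric disks around $0$ and reduce everything to the baseline estimate around the origin.

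The first step is the baseline estimate: I would show that for every $\rho\in(0,1)$,
\[
\left| \int_{\R_+^2\setminus B_\rho(0)}|\nabla u|^2\, dz - \pi\log\frac{2}{\rho}\right| \le C\rho^2,
\]
with $C$ a universal constant. Using $\hat u = u\circ F = \tfrac{\pi}{2}-\Im w$, for which $|\nabla \hat u|\equiv 1$, conformal invariance gives that the integral equals the area of $S\setminus F^{-1}(B_\rho(0))$. By the identity \eqref{eqn:cosh}, the level set $\{|F(w)|=\rho\}$ is sandwiched between the vertical lines $\Re w = \tfrac12\arcosh(2/\rho^2\pm 1)$, so the area differs from $\tfrac{\pi}{2}\arcosh(2/\rho^2\pm 1)$ by at most $C\rho^2$. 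A short expansion of $\arcosh$ at $+\infty$ (using $\sqrt{(2/\rho^2\pm 1)^2-1} = (2/\rho^2)\sqrt{1\pm \rho^2}$) gives $\tfrac{\pi}{2}\arcosh(2/\rho^2\pm 1)=\pi\log(2/\rho)+O(\rho^2)$, which yields the baseline.

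The second step is to identify $D$ geometrically. A direct computation gives $\Phi_{-b}(b\pm r) = \pm r/(1-b^2\mp br)$, so the real-axis diameter of $D$ has endpoints with extreme distances to the origin
\[
R_{\min} = \frac{r}{1-b^2+|b|r}, \qquad R_{\max} = \frac{r}{1-b^2-|b|r}.
\]
The hypothesis $r<1-|b|$ gives $2|b|r<1-b^2$, so both radii are positive and comparable, and we have $B_{R_{\min}}(0)\subset D\subset B_{R_{\max}}(0)$. Applying the baseline estimate to $R_{\min}$ and $R_{\max}$ and using the monotonicity sandwich
\[
\int_{\R_+^2\setminus B_{R_{\max}}(0)}|\nabla u|^2 \le \int_{\R_+^2\setminus D^+}|\nabla u|^2 \le \int_{\R_+^2\setminus B_{R_{\min}}(0)}|\nabla u|^2
\]
reduces the task to comparing $\pi\log(2/R_{\min,\max})$ with $\pi\log((2-2b^2)/r)$. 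Since
\[
\log\frac{2}{R_{\min,\max}} = \log\frac{2-2b^2}{r} + \log\!\left(1\pm \frac{|b|r}{1-b^2}\right)
\]
and $|b|r/(1-b^2)\le 1/2$ by the hypothesis, the Taylor bound $|\log(1+x)|\le 2|x|$ for $|x|\le 1/2$ gives a correction of size $O(|b|r/(1-b^2))$, while the baseline error contributes $O(R_{\max}^2) = O(r^2/(1-b^2-|b|r)^2)$. Summing these produces exactly the bound claimed in the lemma, and the ``in particular'' statement follows by fixing $b$ and letting $r\searrow 0$, as both error terms vanish.

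The main obstacle is the baseline step: I must extract the $O(\rho^2)$ remainder rather than a bound that only behaves like $o(1)$ or $O(\rho)$, because it is this quadratic decay that becomes the second summand $r^2/(1-b^2-|b|r)^2$ after the sandwich (where $\rho$ is played by $R_{\max}$, which can approach $1$). The remaining work is routine Möbius bookkeeping and Taylor expansion.
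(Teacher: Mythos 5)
Your proposal is correct and follows essentially the same route as the paper: the baseline $O(\rho^2)$ estimate at the origin via the conformal map $F$ and the strip $S$ (the paper extracts the quadratic remainder by exponentiating the integral rather than expanding $\arcosh$, but the two computations are equivalent), followed by conformal invariance under $\Phi_{-b}$ and the sandwich of $\Phi_{-b}(B_r^+(b))$ between the concentric half-disks of radii $r/(1-b^2\pm|b|r)$. The bookkeeping of the two error terms matches the paper's, so nothing further is needed.
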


\begin{proof}[Proof of Lemma \ref{cross_product}]
Using the definition of the M\"obius transform $\Phi_b$,
it is easy to check that
\[
\Phi_b \circ \Phi_c = \Phi_{\frac{b + c}{1 + bc}}.
\]
Set $q = \frac{b - c}{1 - bc}$. Then $|q|=\varrho(b, c)$ and
\[
\begin{split}
\int_{\R_+^2} \nabla u_b \cdot \nabla u_c \, dx & = \int_{\R_+^2} \nabla u \cdot \nabla (u_c \circ \Phi_b) \, dx  \\
& = \int_{\R_+^2} \nabla u \cdot \nabla u_q \, dx \\
& = -\PV\int_{-1}^1 u_q \dd{u}{x_2} \, dx_1 \\
& = -\frac{\pi}{2} \PV\int_{-1}^q \dd{u}{x_2} \, dx_1 + \frac{\pi}{2}\PV\int_{q}^1 \dd{u}{x_2} \, dx_1.
\end{split}
\]
In order to determine $\dd{u}{x_2}$, we recall that
the function $f$ defined in \eqref{uniq_sol} is holomorphic in $\R_+^2$. Hence
we have
\[
f'(z) = \dd{u}{x_2} + i \dd{u}{x_1}, \quad z\in \R_+^2.
\]
In particular,
\[
\dd{u}{x_2} = \Re f'(z),  \quad z\in \R_+^2.
\]
Differentiating both sides of the equation
\[
\frac{\pi i}{2} - w = f(F(w)),
\]
we calculate
\[
f'(F(w)) = -\frac{1}{F'(w)} = -\frac{\cosh^2 w}{\sinh w}.
\]

Let $t \in (0, 1)$ and $s = \arcosh \frac{1}{t}$. Set
$w = s + i \pi$, so that $F(w) = t$. Then
$\cosh w = - \frac{1}{t}$ and $\sinh w = - \sinh s = -\sqrt{t^{-2} - 1}$.
Hence
\[
f'(t) = \frac{1}{t \sqrt{1 - t^2}} = \frac{d}{dt} \left(\log |t| - \log \left(1 + \sqrt{1 - t^2}\right)\right).
\]
That is, 
\begin{equation} \label{form_du_x2}
\dd{u}{x_2}(x_1, 0) = \frac{d}{dx_1} \left(\log |x_1| - \log \left(1 + \sqrt{1 - x_1^2}\right)\right), \quad x_1\in (0,1).
\end{equation}
By the odd symmetry of $u$ in $x_1$ (see Proposition \ref{pro:uniq}), we have the same equality on $(-1, 0) \times \{0\}$.
Thus
\[
\PV\int_{-1}^q \dd{u}{x_2} \, dx_1 = \log |q| - \log \left(1 + \sqrt{1 - q^2}\right)
\]
and
\[
\PV\int_q^1 \dd{u}{x_2} \, dx_1 = - \log |q| + \log \left(1 + \sqrt{1 - q^2}\right),
\]
where the principal value can be ignored for
exactly one of these integrals because there
is no singularity.
It follows that
\eqref{eqn:reduced_energy1} holds.
\end{proof}

\begin{proof}[Proof of Lemma \ref{H^1_norm}]
Let $r \in (0, 1)$ and consider the integral
\[
I_r = \int_{\R^2_+ \backslash B_r(0)} |\nabla u|^2 \, dx.
\]
In order to estimate $I_r$, we first study the image of
$\R_+^2 \backslash B_r(0)$ under $F^{-1}$ and then
perform the change of variables $x=F(w)=-1/\cosh(w)$. Recall identity
\eqref{eqn:cosh}, which implies that
\[
\begin{split}
\set{w \in S}{\Re w < \frac{1}{2} \arcosh\left(\frac{2}{r^2} - 1\right)} & \subset F^{-1}(\R_+^2 \backslash B_r(0)) \\
& \subset \set{w \in S}{\Re w < \frac{1}{2} \arcosh \left(\frac{2}{r^2} + 1\right)},
\end{split}
\]
where $S$ is the domain defined on page \pageref{def:S}. Recalling the function $\hat{u}(w)=u(x)$ for $x=F(w)$,
we see that
\[
I_r = \int_{F^{-1}(\R_+^2 \backslash B_r(0))} |\nabla \hat{u}|^2 \, dw \in \left[\frac{\pi}{2} \arcosh\left(\frac{2}{r^2} - 1\right), \frac{\pi}{2} \arcosh \left(\frac{2}{r^2} + 1\right)\right].
\]
Thus
\[
\frac{4}{r^2} - 2 \le e^{2I_r/\pi} + e^{-2I_r/\pi} \le \frac{4}{r^2} + 2.
\]
As $0 < e^{-2I_r/\pi} < 1$, this means that
\[
\frac{4}{r^2} - 3 \le e^{2I_r/\pi} \le \frac{4}{r^2} + 2
\]
and
\[
\frac{\pi}{2} \log \left(\frac{4}{r^2} - 3\right) \le I_r \le \frac{\pi}{2} \log \left(\frac{4}{r^2} + 2\right).
\]
In particular, there exists a universal constant $C_1>0$
such that
\be
\label{intermid}
\left|\int_{\R_+^2 \backslash B_r(0)} |\nabla u|^2 \, dx - \pi \log \frac{2}{r}\right| \le C_1 r^2, \quad r\in (0,1).
\ee

For $b \in (-1, 1)$ and $r \in (0, 1 - |b|)$, we have
\[
\int_{\R_+^2 \backslash B_r(b)} |\nabla u_b|^2 \, dx = \int_{\R_+^2 \backslash \Phi_{-b}(B_r^+(b))} |\nabla u|^2 \, dx.
\]
Thus we now examine the set $\Phi_{-b}(B_r^+(b))$.
Since $\Phi_{-b}$ is a M\"obius transform, it maps the
semicircle $\partial^+ B_r(b)$ to a semicircle, which
contains the points
\[
\Phi_{-b}(b - r) = -\frac{r}{1 - b^2 + br}<0 \quad \text{and} \quad \Phi_{-b}(b + r) = \frac{r}{1 - b^2 - br}>0.
\]
Since $\Phi_{-b}(b)=0$, we obtain
\[
B_{r/(1 - b^2 +|b|r)}^+(0) \subset \Phi_{-b}(B_r^+(b)) \subset B_{r/(1 - b^2 - |b|r)}^+(0).
\]
It then follows from \eqref{intermid} that 
\[
\begin{split}
\lefteqn{\pi \log \frac{2}{r} + \pi \log (1 - b^2 - |b|r) - \frac{C_1 r^2}{(1 - b^2 - |b|r)^2}} \qquad\qquad \\
& \le \int_{\R_+^2 \backslash B_r(b)} |\nabla u_b|^2 \, dx \\
& \le \pi \log \frac{2}{r} + \pi \log (1 - b^2 + |b|r) + \frac{C_1 r^2}{(1 - b^2 - |b|r)^2}.
\end{split}
\]
Hence \eqref{eqn:reduced_energy2} holds.
\end{proof}

\begin{proof}[Proof of Proposition \ref{pro:W_1}]
The formula follows directly from the definition of
$W_1$, the definition \eqref{energy_rescaled_stray} of
$E_{a, d}^*$, Proposition \ref{prop:minimiser},
and the last two lemmas.
\end{proof}

\subsection{The rescaled tail profile}
\label{sec:rescale_tail}

Since $u_{a, d}^*$ is the expected limit of the rescaled stray field potential,
assuming that condition
\eqref{eqn:u_boundary1} will be preserved in the limit,
we can derive an expected profile $\mu_{a, d}^*$ for the rescaled first component of
the tails of the N\'eel walls.

Using the unique solution $u$ of the problem \eqref{P1}--\eqref{P4}, we first define the logarithmically rescaled tail profiles
\be
\label{def_mu}
\mu(x_1) := -\PV\int_{-1}^{x_1} \dd{u}{x_2}(t, 0) \, dt \stackrel{\eqref{form_du_x2}}{=} \log \left(1 + \sqrt{1 - x_1^2}\right) - \log |x_1|, \quad x_1\in (-1,1)\setminus\{0\},
\ee
and
\be
\label{def_mub}
\mu_b(x_1) :=\mu\circ \Phi_{-b}(x_1)=\mu\left(\frac{x_1 - b}{1 - bx_1}\right), \quad x_1\in (-1,1)\setminus\{b\}.
\ee
Then for $u_b=u\circ \Phi_{-b}$, we find $\dd{u_b}{x_2} = \dd{u}{x_2} \Phi'_{-b}=-\mu_b'$ on $(-1, b) \times \{0\}$
and $(b, 1) \times \{0\}$, using the conformality
of $\Phi_{-b}$ and the fact that $\Phi'_{-b}$ is real on
$(-1, b) \times \{0\}$ and on $(b, 1) \times \{0\}$.
If we define
\[
\mu_{a, d}^* = \sum_{n = 1}^N \gamma_n \mu_{a_n}, \quad x_1\in (-1,1)\setminus\{a_1, \dots, a_N\},
\]
then we also have
\[
\dd{u_{a, d}^*}{x_2} = -(\mu_{a, d}^*)' \text{ on
$(-1, 1) \times \{0\}$ except at the singularities $a_k$, $k=1, \dots, N$.}
\]
Note that
$$\mu_{a_n}(a_k)=\mu_{a_k}(a_n)=\log \left(\frac{1 + \sqrt{1 - \varrho(a_n, a_k)^2}}{\varrho(a_n, a_k)}\right), \quad n\neq k.$$

We need to examine the behaviour of $\mu_{a, d}^*$ near the points
$a_n$, as this will be important for determining the energy of
the tail-core interaction.

\begin{proposition}
Set
\[
\lambda_n = \gamma_n \log(2 - 2a_n^2) + \sum_{k \not= n} \gamma_k \mu_{a_k}(a_n)
\]
for $n = 1, \ldots, N$. Then there exists a constant $C=C(a)>0$
such that
\begin{equation} \label{eqn:mu_estimate}
\left|\mu_{a, d}^*(a_n \pm r) - \lambda_n - \gamma_n \log \frac{1}{r}\right| \le C r
\end{equation}
for any $r \in (0, \rho(a)]$ and any $n = 1, \ldots, N$.
\end{proposition}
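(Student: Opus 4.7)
The plan is to split $\mu_{a,d}^*$ at the point $a_n$ into its singular contribution (the term $k = n$) and its regular contribution (the sum over $k \neq n$), and to expand each piece by hand around $a_n$.

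\textbf{Singular part.} First I would compute explicitly
\[
\Phi_{-a_n}(a_n \pm r) = \frac{\pm r}{1 - a_n^2 \mp a_n r},
\]
which for $r \in (0, \rho(a)]$ stays in a compact subset of $(-1, 1) \setminus \{0\}$ whose size is controlled by $\rho(a)$ and by the bound $|a_n| \le 1 - \rho(a)$. Next I would Taylor-expand $\mu(s) = \log(1 + \sqrt{1 - s^2}) - \log |s|$ near $s = 0$: since $\sqrt{1 - s^2} = 1 - s^2/2 + O(s^4)$, one obtains $\mu(s) = \log 2 - \log |s| + O(s^2)$. Plugging in $s = \Phi_{-a_n}(a_n \pm r)$ and using $\log(1 - a_n^2 \mp a_n r) = \log(1 - a_n^2) + O(r)$ (again uniformly in $r \le \rho(a)$), this yields
\[
\mu_{a_n}(a_n \pm r) = \log(2 - 2 a_n^2) + \log \tfrac{1}{r} + O(r),
\]
with the $O(r)$ constant depending only on $\rho(a)$.

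\textbf{Regular part.} For $k \neq n$, the function $\mu_{a_k}$ is real-analytic in a neighbourhood of $a_n$ of radius $\rho(a)$, because the argument $\Phi_{-a_k}(x_1)$ is bounded away from the singularity $0$ of $\mu$ on that neighbourhood (specifically $|\Phi_{-a_k}(x_1)| \ge \varrho(a_k, a_n)/2$ or so, which is positive). Hence by the mean value theorem applied on $[a_n - r, a_n + r]$,
\[
\bigl|\mu_{a_k}(a_n \pm r) - \mu_{a_k}(a_n)\bigr| \le r \sup_{|t - a_n| \le \rho(a)} |\mu_{a_k}'(t)| \le C(a) \, r.
\]

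\textbf{Combining.} Multiplying the singular estimate by $\gamma_n$ and the regular estimate by $\gamma_k$ (for $k \neq n$) and summing, one obtains
\[
\mu_{a,d}^*(a_n \pm r) = \gamma_n \bigl[\log(2 - 2 a_n^2) + \log \tfrac{1}{r}\bigr] + \sum_{k \neq n} \gamma_k \mu_{a_k}(a_n) + O(r) = \lambda_n + \gamma_n \log \tfrac{1}{r} + O(r),
\]
which is the desired inequality~\eqref{eqn:mu_estimate}.

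The main (and in fact only) obstacle is of a bookkeeping nature: one must ensure that the constants in both Taylor expansions are uniform in $r \in (0, \rho(a)]$. This is precisely why $\rho(a)$ is defined to control both the distance of each $a_n$ to the boundary $\pm 1$ (so that $1 - a_n^2 \mp a_n r$ stays uniformly positive) and the mutual distances between the $a_k$'s (so that the regular terms stay smooth on the interval where $r$ ranges). No further input is needed.
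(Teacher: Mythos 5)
Your proposal is correct and follows essentially the same route as the paper: the paper likewise treats the term $k=n$ via the explicit formula $\mu_{a_n}(a_n\pm r)=\log\bigl(1+\sqrt{1-\tfrac{r^2}{(1-a_n^2\mp a_n r)^2}}\bigr)+\log(1-a_n^2\mp a_n r)+\log\tfrac1r$ (your Taylor expansion of $\mu$ composed with $\Phi_{-a_n}$ yields exactly this), and the terms $k\neq n$ via smoothness of $\mu_{a_k}$ away from $a_k$ plus a Lipschitz bound, before summing. The uniformity bookkeeping you flag is exactly what $\rho(a)$ is for, and your handling of it is fine.
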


\begin{proof}
First we study $\mu_b$ again for
a fixed $b$. Obviously, for $c \in (-1, 1)$ with $b \not= c$,
this function is smooth at $c$, and therefore there exists
a constant $C_1>0$, depending only on $b$ and $c$, such that
\[
|\mu_b(c \pm r) - \mu_b(c)| \le C_1 r
\]
for any sufficiently small $r > 0$. We also have, by \eqref{def_mu} and \eqref{def_mub}, the formula
\[
\mu_b(b \pm r) = \log\left(1 + \sqrt{1 - \frac{r^2}{(1 - b^2 \mp br)^2}}\right) + \log(1 - b^2 \mp br) + \log \frac{1}{r}.
\]
Thus there exists a constant $C_2 = C_2(b)$ such that
\[
\left|\mu_b(b \pm r) - \log(2 - 2b^2) - \log \frac{1}{r}\right| \le C_2 r
\]
for $r \in (0, 1-|b|)$. If we sum up for $b=a_k$, $k=1, \dots, N$,
then the conclusion follows.
\end{proof}

We will prove in Section \ref{sect:lower} that the function
\[
\cos \alpha + \frac{\mu_{a, d}^*}{\log \frac{1}{\delta}}
\]
gives an approximation for the profile of the first component $m_{1, \eps}$ for a minimiser
$m_\eps$ of $E_\epsilon$ over $M(a, d)$. In a standard N\'eel wall with
rotation angle $2\alpha$ (or $2\pi - 2\alpha$), the function $m_{1, \eps}$ would be required
to make a transition from $\cos \alpha$ to $1$ (or $-1$) and back.
If we superimpose several N\'eel walls, then this is no longer
true. Instead, in an interval $(a_n - r, a_n + r)$, up to a small error,
$m_{1, \eps}$ is now required, by \eqref{eqn:mu_estimate}, to make a transition from
\[
\cos \alpha + \frac{ \lambda_n+\gamma_n  \log \frac{1}{r}}{\log \frac{1}{\delta}}
\]
to $\pm 1$ and back. We discount the term involving $\log \frac{1}{r}$,
as it will be cancelled by a similar term elsewhere. 
The contribution of the remaining term to the energy
(to leading order, rescaled by $(\log \delta)^2$) is then
$- \pi \gamma_n \lambda_n$ near the point $a_n$ (see Remark \ref{rem:core_estimate} below). Thus in total,
we have a correction term of the form
\[
W_2(a, d) = -\pi \sum_{n = 1}^N \gamma_n \lambda_n = -\pi \sum_{n = 1}^N \left(\gamma_n^2 \log(2 - 2a_n^2) + \sum_{k \not= n} { \gamma_n}\gamma_k \mu_{a_k}(a_n)\right).
\]
Using the above explicit expression for $\mu_{a_k}(a_n)$,
we find
\[
W_2(a, d) = -\pi \sum_{n = 1}^N \gamma_n^2 \log(2 - 2a_n^2) - \pi \sum_{n = 1}^N \sum_{k \not= n} \gamma_k \gamma_n \log \left(\frac{1 + \sqrt{1 - \varrho(a_k, a_n)^2}}{\varrho(a_k, a_n)}\right).
\]
This corresponds to the sum of the tail-core interactions described in
Section \ref{sect:result}. Note that
$$W_2(a, d)=-2W_1(a, d).$$
We finally define $W(a, d) := W_1(a, d) + W_2(a, d).$ That is,
\[
W(a, d) = -\frac{\pi}{2} \sum_{n = 1}^N \gamma_n^2 \log(2 - 2a_n^2) - \frac{\pi}{2} \sum_{n = 1}^N \sum_{k \not= n} \gamma_k \gamma_n \log \left(\frac{1 + \sqrt{1 - \varrho(a_k, a_n)^2}}{\varrho(a_k, a_n)}\right).
\]

\section{The Euler-Lagrange equation} \label{sect:Euler-Lagrange}

Since we study the number $\inf_{M(a, d)} E_\epsilon$
for a given $a \in A_N$ and $d \in \{\pm 1\}^N$, it is useful
to consider minimisers of $E_\epsilon$ in $M(a, d)$ (see Proposition \ref{pro:exist_unique}). The problem gives
rise to an Euler-Lagrange equation, which is most easily
expressed in terms of the continuous function $\varphi : (-1, 1) \to \R$
with $\varphi(-1) = \alpha$ and $m = (\cos \varphi, \sin \varphi)\in W^{1,2}((-1,1), \Ss^1)$.
Let $u = U(m)$ be the function defined on page \pageref{def:U}. Then the equation is
\begin{equation} \label{eqn:Euler-Lagrange}
\epsilon \varphi''(x_1) = \dd{u}{x_1}(x_1, 0) \sin \varphi(x_1) \quad \text{for } x_1 \in (-1, 1) \backslash \{a_1, \ldots, a_N\}.
\end{equation}
For the derivation of \eqref{eqn:Euler-Lagrange}, let $\zeta\in C^\infty_0(-1, 1)$ with $\zeta(a_k)=0$ for $k=1, \dots, N$ be an infinitesimal variation of $m_1$. Then
\begin{align*}
\left.\frac{d}{dt}\right|_{t=0} \left(\frac 1 2  \int_{\R^2_+} |\nabla U(m_1+t\zeta)|^2\, dx\right) &=
\int_{\R^2_+} \nabla U(m_1)\cdot \nabla U(\zeta)\, dx\\
&\stackrel{\eqref{weak_stray}}{=}\int_{-1}^1\zeta' U(m_1) \, dx_1=-\int_{-1}^1\zeta \dd{u}{x_1} \, dx_1.
\end{align*}
Equation \eqref{eqn:Euler-Lagrange} is now obtained with the usual computations.

We use the abbreviation $u'$ for the derivative
of the trace of $u$ with respect to $(-1, 1) \times \{0\}$.
Then we have the following shorthand form of \eqref{eqn:Euler-Lagrange}:
\[
\epsilon \varphi'' = u' \sin \varphi \quad \text{on } (-1, 1) \backslash \{a_1, \ldots, a_N\}.
\]
We now analyse this equation. More precisely, we prove an interior
$W^{2,2}$-estimate for solutions of \eqref{eqn:Euler-Lagrange} and we prove a
Pohozaev identity. As a consequence, we eventually find that the exchange
energy in the core of a N\'eel wall is of order
$O(1/(\log \delta)^2)$.

\subsection{An interior $W^{2,2}$-estimate}

We have the following interior $W^{2,2}$-estimate for a solution of the
Euler-Lagrange equation and for the corresponding stray field potential.
This estimate will be used in Theorem 
\ref{thm:core_convergence} below in order to find the specific profile of the N\'eel wall.

\begin{lemma} \label{lemma:higher_estimates}
Let $0\leq r<r'<R'<R$. There exists a constant $C>0$ (depending only on $r'-r$ and $R-R'$) such that
the following holds true. Let $\epsilon > 0$ and set $I=(-R,-r)\cup (r, R)$. Suppose that the functions 
$u \in W^{1,2}(B_R^+(0)\setminus B_r(0))$ and $\varphi \in W^{1,2}(I)$
solve the system 
\begin{alignat*}{2}
\Delta u & = 0 &\quad& \text{in $B_R^+(0)\setminus B_r(0)$}, \\
\dd{u}{x_2} & = \varphi' \sin \varphi && \text{on $I \times \{0\}$}, \\
\epsilon \varphi'' & = u' \sin \varphi && \text{in $I$}.
\end{alignat*}
Further suppose that $\sin \varphi\neq 0$ in $I$. Then
\begin{multline*}
\int_{B_{R'}^+(0)\setminus B_{r'}(0)} |\nabla^2 u|^2 \, dx + \epsilon \int_{(-R',-r')\cup (r', R')} \bigg((\varphi'')^2 + (\varphi')^4(1 + \cot^2 \varphi)\bigg) \, dx_1 \\
\le C \bigg(\epsilon \int_I (\varphi')^2 \, dx_1+\int_{B_R^+(0)\setminus B_r(0)} |\nabla u|^2 \, dx\bigg).
\end{multline*}
\end{lemma}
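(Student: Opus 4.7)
My plan is to establish the estimate by tangentially differentiating the Laplace equation, testing against a cutoff, and then using the Euler-Lagrange equation to identify the boundary term that arises on $I \times \{0\}$.

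First, by a standard bootstrap argument---based on the fact that $\sin\varphi$ stays bounded away from $0$ on compact subsets of $I$, turning the coupled system into a non-degenerate semilinear problem---I may assume that $u$ and $\varphi$ are as smooth as needed to justify the manipulations below. Fix a smooth cutoff $\eta \in C_c^\infty(\R^2)$ with $\eta \equiv 1$ on $\overline{B_{R'} \setminus B_{r'}}$ and $\mathrm{supp}\,\eta \subset B_R \setminus \overline{B_r}$. Setting $v := \partial_{x_1} u$, the identity $|\nabla^2 u|^2 = 2|\nabla v|^2$ (a consequence of $\Delta u = 0$, which forces $\partial_{x_2x_2} u = -\partial_{x_1 x_1} u$) reduces the task to an estimate for $\int \eta^2 |\nabla v|^2 \, dx$.

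Since $v$ is harmonic in $B_R^+ \setminus \overline{B_r}$, the Dirichlet and Neumann traces of $v$ on $I \times \{0\}$ are respectively $v = u' = \epsilon\varphi''/\sin\varphi$ (by the Euler-Lagrange equation) and $v_{x_2} = (\varphi'\sin\varphi)' = \varphi''\sin\varphi + (\varphi')^2 \cos\varphi$. The outward normal on $I \times \{0\}$ is $(0,-1)$, so testing $\Delta v = 0$ against $\eta^2 v$ via Green's identity gives
\[
\int \eta^2 |\nabla v|^2 \, dx + \int_I \eta^2 \bigl[\epsilon(\varphi'')^2 + \epsilon(\varphi')^2 \varphi'' \cot\varphi\bigr] dx_1 = -2 \int \eta v \, \nabla\eta \cdot \nabla v \, dx.
\]
The cotangent contribution is rewritten using
\[
(\varphi')^2 \varphi'' \cot\varphi = \frac{d}{dx_1}\!\left(\tfrac{1}{3}(\varphi')^3 \cot\varphi\right) + \frac{(\varphi')^4}{3\sin^2\varphi}
\]
together with $1/\sin^2\varphi = 1+\cot^2\varphi$; integrating by parts in $x_1$ along $I$ (endpoint contributions vanish because $\eta$ is compactly supported away from $\{\pm r, \pm R\}$) yields
\[
\int_I \eta^2 \epsilon(\varphi')^2\varphi''\cot\varphi \, dx_1 = \frac{\epsilon}{3}\int_I \eta^2 (\varphi')^4(1+\cot^2\varphi)\, dx_1 - \frac{2\epsilon}{3}\int_I \eta\eta'(\varphi')^3 \cot\varphi \, dx_1.
\]

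After combining, every term on the left-hand side is non-negative, and the right-hand side is absorbed in the standard way: the bulk term $-2\int \eta v \nabla\eta\cdot\nabla v$ is controlled by Cauchy-Schwarz at the cost of $C \int |\nabla\eta|^2 |\nabla u|^2 \, dx$; the remaining boundary term is handled via the Young inequality $|\eta\eta'(\varphi')^3\cot\varphi| \le \tfrac{1}{4}\eta^2 (\varphi')^4 \cot^2\varphi + (\eta')^2 (\varphi')^2$, the first piece being absorbed into the $(1+\cot^2\varphi)$-integral on the left and the second contributing $C\epsilon\int_I (\varphi')^2 \, dx_1$. The claimed inequality now follows from $|\nabla^2 u|^2 = 2|\nabla v|^2$ together with $\eta \equiv 1$ on the inner half-annulus, with constants depending only on $r' - r$ and $R - R'$ through $\eta$. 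The main technical point---and the only thing that could have gone wrong---is the sign of the $\int_I \eta^2 \epsilon(\varphi'')^2$ boundary contribution: the downward outward normal on $I \times \{0\}$ combined with the Euler-Lagrange substitution $v = \epsilon\varphi''/\sin\varphi$ produces exactly the $+$ sign needed for all three quadratic quantities to appear on the left with positive coefficients, so that the estimate closes rather than producing an uncontrollable cancellation.
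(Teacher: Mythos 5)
Your proof is correct and follows essentially the same route as the paper's: differentiate tangentially ($v=\partial u/\partial x_1$), test the harmonic equation for $v$ against $\eta^2 v$, use the Euler--Lagrange equation to turn the boundary term into $-\epsilon\int\eta^2[(\varphi'')^2+(\varphi')^2\varphi''\cot\varphi]$, integrate the cotangent term by parts to produce the quartic term, and absorb the error terms by Young's inequality. The sign check at the end and the identity $|\nabla^2u|^2=2|\nabla v|^2$ also match the paper's argument.
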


\begin{proof}
We first note that $\varphi$ is smooth on $I$ (see \cite{Ignat_Knupfer}) and therefore
$u$ is smooth in $B_R^+(0)\setminus B_r(0)$ up to the boundary $I\times\{0\}$. 
Consider the function $v = \dd{u}{x_1}$ on $B_R^+(0)\setminus B_r(0)$. We have
\[
v = \frac{\epsilon \varphi''}{\sin \varphi} \quad \text{and} \quad \dd{v}{x_2} = - \frac{\partial^2}{\partial x_1^2} \big( \cos \varphi \big)\, 
\textrm{ on $I \times \{0\}$}.
\]
Let $\eta \in C_0^\infty(B_R^+(0)\setminus B_r(0))$.
Since $v$ is harmonic in $B_R^+(0)\setminus B_r(0)$, integration by parts yields
\[
\begin{split}
\int_{B_R^+(0)} \eta^2 |\nabla v|^2 \, dx & = \int_I \eta^2 v\frac{d^2}{dx_1^2} (\cos \varphi) \, dx_1 - 2 \int_{B_R^+(0)} \eta v \nabla \eta \cdot \nabla v \, dx \\
& = -\epsilon \int_I \eta^2 \frac{\varphi''}{\sin \varphi} \left(\varphi'' \sin \varphi + (\varphi')^2 \cos \varphi\right) \, dx_1- 2 \int_{B_R^+(0)} \eta v \nabla \eta \cdot \nabla v \, dx \\
& = - \epsilon \int_I \eta^2 (\varphi'')^2 \, dx_1 - \epsilon \int_I \eta^2  \frac{\left((\varphi')^3\right)'}{3} \cot \varphi \, dx_1 
- 2 \int_{B_R^+(0)} \eta v \nabla \eta \cdot \nabla v \, dx \\
& = - \epsilon \int_I \eta^2 (\varphi'')^2 \, dx_1 - \frac{\epsilon}{3} \int_I \eta^2 (\varphi')^4 (1 + \cot^2 \varphi) \, dx_1
+ \frac{2\epsilon}{3} \int_I \eta \eta' (\varphi')^3 \cot \varphi \, dx_1\\ & \quad  - 2 \int_{B_R^+(0)} \eta v \nabla \eta \cdot \nabla v \, dx.
\end{split}
\]
By Young's inequality,
\[
\int_I \eta \eta' (\varphi')^3 \cot \varphi \, dx_1 \le \frac{1}{4} \int_I \eta^2 (\varphi')^4 \cot^2 \varphi \, dx_1 + \int_I (\eta')^2 (\varphi')^2 \, dx_1
\]
and
\[
-\int_{B_R^+(0)} \eta v \nabla \eta \cdot \nabla v \, dx \le \frac{1}{4} \int_{B_R^+(0)} \eta^2 |\nabla v|^2 \, dx + \int_{B_R^+(0)} |\nabla \eta|^2 v^2 \, dx.
\]
Therefore,
\begin{align}
\label{H2estim_local}
\int_{B_R^+(0)} \eta^2 |\nabla v|^2 \, dx + &\epsilon \int_I \eta^2 \left((\varphi'')^2  + (\varphi')^4(1 + \cot^2 \varphi)\right) \, dx_1 \\
\nonumber
&\le 12 \int_{B_R^+(0)} v^2 |\nabla \eta|^2 \, dx + 4\epsilon \int_I (\eta')^2 (\varphi')^2 \, dx_1.
\end{align}
As $u$ is harmonic, we observe that
that
\[
\frac{\partial^2 u}{\partial x_2^2} = - \dd{v}{x_1},
\]
so that
\[
\frac 1 2 |\nabla^2 u|^2= \left|\dd{v}{x_1}\right|^2+ \left|\dd{v}{x_2}\right|^2 \quad \textrm{in }\, B_R^+(0)\setminus B_r(0).
\]
Choosing suitable cut-off functions $\eta$, we can now easily derive the desired inequality.
\end{proof}

\subsection{A Pohozaev identity}

As in the theory of Ginzburg-Landau vortices, a
variant of the identity due to Pohozaev \cite{Pohozaev:65}
is useful for our problem. For a function $u : B_1^+(0) \to \R$,
we use the notation
\[
\partial_\rho u = \frac{x}{|x|} \cdot \nabla u.
\]

\begin{lemma} \label{lemma:Pohozaev}
Let $\epsilon > 0$.
Suppose that the functions $u \in W^{1,2}(B_1^+(0))$ and $\varphi \in W^{1,2}(-1,1)$
solve the system 
\begin{alignat}{2}
\Delta u & = 0 &\quad& \text{in $B_1^+(0)$}, \label{eqn:Pohozaev_harmonic} \\
\dd{u}{x_2} & = \varphi' \sin \varphi && \text{on $(-1,1) \times \{0\}$}, \label{eqn:Pohozaev_boundary} \\
\epsilon \varphi'' & = u' \sin \varphi && \text{in $(-1, 0)$ and $(0, 1)$}. \label{eqn:Pohozaev_Euler-Lagrange}
\end{alignat}
Then for any $r \in (0, 1)$,
\[
\epsilon \int_{-r}^{r} (\varphi')^2 \, dx_1 = r\epsilon (\varphi'(r))^2 + r\epsilon (\varphi'(-r))^2
+ r\int_{\partial^+ B_r(0)} \left(|\nabla u|^2 - 2\left(\partial_\rho u\right)^2\right) \, d\sigma.
\]
\end{lemma}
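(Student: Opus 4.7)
The plan is to apply the classical Pohozaev multiplier trick, i.e.\ to multiply $\Delta u = 0$ by the radial multiplier $x\cdot\nabla u$ and integrate over the half-disc $B_r^+(0)$. First I would integrate by parts: in dimension two, the two bulk contributions $\int |\nabla u|^2\,dx$ produced by the two integrations by parts cancel exactly, leaving
\[
0 = \int_{\partial B_r^+(0)} (x\cdot\nabla u)\,\partial_\nu u\,d\sigma - \tfrac12\!\int_{\partial B_r^+(0)} (x\cdot\nu)|\nabla u|^2\,d\sigma.
\]
I would then split $\partial B_r^+(0)$ into the semicircle $\partial^+ B_r(0)$, on which $\nu = x/|x|$ and so the two boundary integrals combine into $r\int_{\partial^+ B_r(0)}\!\bigl((\partial_\rho u)^2 - \tfrac12|\nabla u|^2\bigr)d\sigma$, and the flat diameter $(-r,r)\times\{0\}$, on which $\nu = -e_2$ and $x\cdot\nu = 0$, so that only $-\int_{-r}^{r}\! x_1 u'\,\partial_{x_2}u\,dx_1$ survives. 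Using the boundary condition \eqref{eqn:Pohozaev_boundary}, this last integral equals $-\int_{-r}^{r} x_1 u'\varphi'\sin\varphi\,dx_1$.

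Next I would plug in the Euler-Lagrange equation \eqref{eqn:Pohozaev_Euler-Lagrange} to rewrite $u'\sin\varphi = \epsilon\varphi''$, turning the flat-boundary term into $-\epsilon\int_{-r}^{r} x_1\varphi'\varphi''\,dx_1 = -\tfrac{\epsilon}{2}\int_{-r}^{r} x_1 \bigl((\varphi')^2\bigr)'\,dx_1$. A further integration by parts in $x_1$ produces the endpoint contributions $\tfrac{\epsilon r}{2}\bigl((\varphi'(r))^2+(\varphi'(-r))^2\bigr)$ together with the bulk term $\tfrac{\epsilon}{2}\int_{-r}^{r}(\varphi')^2\,dx_1$. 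Rearranging and multiplying by $2$ gives exactly the claimed identity.

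The main technical obstacle is that \eqref{eqn:Pohozaev_Euler-Lagrange} only holds on $(-1,0)\cup(0,1)$, so the last integration by parts must be performed on $(-r,-\eta)\cup(\eta,r)$ and I must justify that the boundary contributions $\tfrac{\epsilon}{2}\eta(\varphi'(\pm\eta))^2$ vanish in the limit $\eta\to 0$. To do this cleanly I would carry out the whole Pohozaev computation not on $B_r^+(0)$ but on the truncated domain $B_r^+(0)\setminus B_\eta^+(0)$, which introduces extra boundary contributions on the small semicircle $\partial^+ B_\eta(0)$, and then pass to the limit along a carefully chosen sequence $\eta_k\to 0$ for which
\[
\eta_k\!\int_{\partial^+ B_{\eta_k}(0)}\!|\nabla u|^2\,d\sigma \to 0 \qquad\text{and}\qquad \eta_k(\varphi'(\pm\eta_k))^2 \to 0.
\]
Existence of such a sequence is standard: from $u\in W^{1,2}(B_1^+(0))$ one gets via Fubini that $\eta\mapsto \int_{\partial^+ B_\eta(0)}|\nabla u|^2\,d\sigma$ lies in $L^1(0,r)$, so the nonnegative function $\eta\int_{\partial^+B_\eta(0)}|\nabla u|^2\,d\sigma$ has liminf zero at $0$; similarly $\varphi'\in L^2(-r,r)$ forces $\eta(\varphi'(\pm\eta))^2$ to have liminf zero along a further subsequence. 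Once the limit is taken along this common sequence, the truncated identity collapses to the stated one; all remaining steps are routine.
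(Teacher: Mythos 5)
Your proposal is correct and follows essentially the same route as the paper: the same Pohozaev multiplier identity (the paper phrases it as $\div\bigl(\tfrac12|\nabla u|^2x-(x\cdot\nabla u)\nabla u\bigr)=0$ integrated over the half-annulus $B_r^+(0)\setminus B_s^+(0)$), the same use of \eqref{eqn:Pohozaev_boundary} and \eqref{eqn:Pohozaev_Euler-Lagrange} to convert the flat-boundary term into $\epsilon\int x_1\varphi'\varphi''$, and the same removal of the inner boundary by choosing a sequence $s_k\searrow 0$ along which $s_k\bigl(\epsilon(\varphi'(\pm s_k))^2+\int_{\partial^+B_{s_k}(0)}|\nabla u|^2\,d\sigma\bigr)\to 0$, which exists since the sum of these quantities is in $L^1$ in the radial variable. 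The only cosmetic point is that you should extract the good sequence from the single $L^1$ function given by the sum of all the inner boundary contributions at once, rather than by successive subsequences.
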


\begin{proof}
As $u$ is harmonic, we calculate
\[
\div \left(\frac{1}{2} |\nabla u|^2 x - (x \cdot \nabla u) \nabla u\right) = 0 \quad \textrm{in $B_1^+(0)$.}
\]
For any $r, s \in (0, 1)$ with $s < r$, it follows that
\[
\begin{split}
0 & = \int_{\partial^+ B_r(0)} \left(\frac{r}{2} |\nabla u|^2 - r \left(\partial_\rho u\right)^2\right) \, d\sigma - \int_{\partial^+ B_s(0)} \left(\frac{s}{2} |\nabla u|^2 - s \left(\partial_\rho u\right)^2\right) \, d\sigma \\
& \quad + \int_{-r}^{-s} x_1 \dd{u}{x_1} \dd{u}{x_2} \, dx_1 + \int_s^r x_1 \dd{u}{x_1} \dd{u}{x_2} \, dx_1.
\end{split}
\]
Using \eqref{eqn:Pohozaev_boundary} and \eqref{eqn:Pohozaev_Euler-Lagrange},
we compute
\[
\begin{split}
\int_{-r}^{-s} x_1 \dd{u}{x_1} \dd{u}{x_2} \, dx_1 & = \epsilon\int_{-r}^{-s} x_1 \varphi'' \varphi' \, dx_1 = \frac{\epsilon}{2} \int_{-r}^{-s} x_1 \frac{d}{dx_1} (\varphi')^2 \, dx_1 \\
& = \frac{\epsilon r}{2} (\varphi'(-r))^2 - \frac{\epsilon s}{2} (\varphi'(-s))^2 - \frac{\epsilon}{2} \int_{-r}^{-s} (\varphi')^2 \, dx_1.
\end{split}
\]
Similarly,
\[
\int_s^r x_1 \dd{u}{x_1} \dd{u}{x_2} \, dx_1 = \frac{\epsilon r}{2} (\varphi'(r))^2 - \frac{\epsilon s}{2} (\varphi'(s))^2 - \frac{\epsilon}{2} \int_s^r (\varphi')^2 \, dx_1.
\]
Hence
\begin{multline*}
\frac{\epsilon}{2} \int_{-r}^{-s} (\varphi')^2 \, dx_1 + \frac{\epsilon}{2} \int_s^r (\varphi')^2 \, dx_1 
= \frac{r}{2} \left(\epsilon(\varphi'(r))^2 + \epsilon(\varphi'(-r))^2 + \int_{\partial^+ B_r(0)} \left(|\nabla u|^2 - 2 \left(\partial_\rho u\right)^2\right) \, d\sigma\right) \\
- \frac{s}{2} \left(\epsilon(\varphi'(s))^2 + \epsilon(\varphi'(-s))^2 + \int_{\partial^+ B_s(0)} \left(|\nabla u|^2 - 2 \left(\partial_\rho u\right)^2\right) \, d\sigma\right).
\end{multline*}
Since $\varphi' \in L^2(-1, 1)$ and $|\nabla u| \in L^2(B_1^+(0))$,
there exists a sequence $s_k \searrow 0$ such that
\[
s_k \left(\epsilon(\varphi'(s_k))^2 + \epsilon(\varphi'(-s_k))^2 + \int_{\partial^+ B_{s_k}(0)} |\nabla u|^2 \, d\sigma\right) \to 0.
\]
Therefore, we obtain the desired identity.
\end{proof}

As a consequence, we obtain the following estimate, which implies that the exchange
energy in the core of a N\'eel wall is of order $O(1/(\log \delta)^2)$.

\begin{lemma} \label{lemma:core_estimate}
Suppose that $\epsilon \in (0, \frac{1}{2}]$ and
the functions $u \in W^{1,2}(B_1^+(0))$ and $\varphi \in W^{1,2}(-1,1)$
solve the system \eqref{eqn:Pohozaev_harmonic}--\eqref{eqn:Pohozaev_Euler-Lagrange}.
Let
\[
F = \frac{\epsilon}{2} \int_{-1}^1 (\varphi')^2 \, dx_1 + \frac{1}{2} \int_{B_1^+(0)} |\nabla u|^2 \, dx.
\]
Then
\begin{equation} \label{eqn:core_estimate1}
\frac{\epsilon}{2} \int_{-\delta}^\delta (\varphi')^2 \, dx_1 \le \frac{F}{\log \frac{1}{\delta}}.
\end{equation}
If $\varphi(0) \in \pi\Z$, then
\begin{equation} \label{eqn:core_estimate2}
\int_{-\delta}^\delta \sin^2 \varphi \, dx_1 \le 8\delta F.
\end{equation}
\end{lemma}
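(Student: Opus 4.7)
The plan is to derive both estimates from the Pohozaev identity in Lemma~\ref{lemma:Pohozaev}. The key observation is that dividing the Pohozaev identity by $r$ and integrating over $r \in [\delta, 1]$ against $dr/r$ produces a factor of $\log(1/\delta)$, which converts a bulk exchange energy integral into the core bound we need. Part (2) then follows from (1) via Cauchy--Schwarz, using the vanishing of $\sin \varphi$ at the origin.

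For (1), I would apply Lemma~\ref{lemma:Pohozaev} at each $r \in [\delta, 1)$, and drop the nonpositive term $-2(\partial_\rho u)^2$ to get
\[
\epsilon \int_{-r}^r (\varphi')^2 \, dx_1 \le r \epsilon (\varphi'(r))^2 + r \epsilon (\varphi'(-r))^2 + r \int_{\partial^+ B_r(0)} |\nabla u|^2 \, d\sigma.
\]
Dividing by $r$ and integrating over $r \in [\delta, 1]$, the right-hand side is bounded by
\[
\epsilon \int_{-1}^1 (\varphi')^2 \, dx_1 + \int_{B_1^+(0)} |\nabla u|^2 \, dx = 2F,
\]
where the boundary integral term reassembles into the full Dirichlet energy via polar coordinates. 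The left-hand side, using monotonicity of $r \mapsto \int_{-r}^r (\varphi')^2 \, dx_1$, is at least
\[
\epsilon \int_{-\delta}^\delta (\varphi')^2 \, dx_1 \int_\delta^1 \frac{dr}{r} = \epsilon \log \frac{1}{\delta} \int_{-\delta}^\delta (\varphi')^2 \, dx_1,
\]
which yields \eqref{eqn:core_estimate1}.

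For (2), the assumption $\varphi(0) \in \pi \Z$ gives $\sin \varphi(0) = 0$, so by the fundamental theorem of calculus and Cauchy--Schwarz,
\[
\sin^2 \varphi(x_1) = \left| \int_0^{x_1} \cos \varphi(t) \, \varphi'(t) \, dt \right|^2 \le |x_1| \int_{-\delta}^\delta (\varphi')^2 \, dt
\]
for $|x_1| \le \delta$. Integrating over $(-\delta, \delta)$ gives $\int_{-\delta}^\delta \sin^2 \varphi \, dx_1 \le \delta^2 \int_{-\delta}^\delta (\varphi')^2 \, dt$. Applying (1) and using $\delta = \epsilon \log(1/\epsilon)$, this becomes
\[
\int_{-\delta}^\delta \sin^2 \varphi \, dx_1 \le \frac{2\delta^2 F}{\epsilon \log \frac{1}{\delta}} = 2 \delta F \cdot \frac{\log(1/\epsilon)}{\log(1/\delta)}.
\]
Writing $L = \log(1/\epsilon) \ge \log 2$, the elementary inequality $L/2 \ge \log L$ (whose minimum at $L = 2$ equals $1 - \log 2 > 0$) yields $\log(1/\delta) = L - \log L \ge L/2$, so the ratio is at most $2$ and we obtain \eqref{eqn:core_estimate2} (in fact with constant $4$ rather than $8$). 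The main nuance is not difficulty but care: one has to verify that the function $r \mapsto \int_{-r}^r (\varphi')^2 \, dx_1$ really is monotone (trivial) and that $\delta < 1$ so that integration up to $r = 1$ makes sense—both of which hold for $\epsilon \in (0, \tfrac{1}{2}]$.
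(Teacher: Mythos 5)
Your proposal is correct and follows essentially the same route as the paper: inequality \eqref{eqn:core_estimate1} is obtained by integrating the Pohozaev identity against $dr/r$ over $[\delta,1]$ (the paper phrases the final step as "there exists an $r$" rather than invoking monotonicity directly, but these are equivalent), and \eqref{eqn:core_estimate2} follows from the fundamental theorem of calculus, Cauchy--Schwarz, and the bound $\log\frac1\epsilon\le 2\log\frac1\delta$. Your variant of the second step, integrating $|x_1|$ over $(-\delta,\delta)$ instead of bounding $|\sin\varphi|$ pointwise, even yields the slightly better constant $4$ in place of $8$.
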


\begin{proof}
It follows from Lemma \ref{lemma:Pohozaev} that
\[
\frac{\epsilon}{2} \int_\delta^1 \frac{1}{r} \int_{-r}^r (\varphi')^2 \, dx_1 \, dr \le F.
\]
Thus there exists an $r \in (\delta, 1]$
such that
\[
\frac{\epsilon}{2} \int_{-r}^r (\varphi')^2 \, dx_1 \le \frac{F}{\log \frac{1}{\delta}}.
\]
Inequality \eqref{eqn:core_estimate1} then follows immediately.

For the second inequality, we note that for $|x_1| \le \delta$,
\[
|\sin \varphi(x_1)| = \left|\int_0^{x_1} \varphi'(t) \cos \varphi(t) \, dt\right| \le \left(|x_1| \int_0^{x_1} (\varphi')^2 \, dt\right)^{1/2} \le \sqrt{\frac{2\delta F}{\epsilon \log \frac{1}{\delta}}} \le 2\sqrt{F}.
\]
(Here we use that fact that $\log \frac{1}{\epsilon} \le 2\log \frac{1}{\delta}$
for $0 < \epsilon \le 1$.) Thus \eqref{eqn:core_estimate2}
follows immediately as well.
\end{proof}

Later we will prove estimates similar to \eqref{eqn:core_estimate1} and \eqref{eqn:core_estimate2} even
without making use of the Euler-Lagrange equation, but assuming suitable control of the energy instead
(see Theorem \ref{thm:stray_field} and Remark \ref{rem:non_min}).

\section{The core}

In this section, we study what happens near a single
N\'eel wall, rescaled to unit size. Several technical difficulties arise in the analysis of the local
behaviour of the magnetization, due to the nonlocal nature of the
magnetostatic energy. For this reason, we first introduce a modified
energy functional where the stray field is considered on a half ball
(instead of $\R^2_+$) with a Neumann boundary condition. We prove energy
estimates (upper and lower bounds) for this modified functional together
with statements about its behaviour under small perturbations of the boundary data.
The analysis of the minimisers of this functional is essential because it yields good approximation results for the N\'eel wall profile
(see Theorem \ref{thm:core_convergence}) and for the core energy
(see Theorem \ref{thm:core_energy_limit}).

\subsection{A modified functional}

For $\gamma \in (0, 2)$, we define the convex set 
$$M_\gamma=\{\mu \in W^{1, 2}(-1, 1) \, :\,  \mu(0) = 1, \, \, \mu(\pm 1) \le 1 - \gamma\}.$$
We think of $\mu$ as the first component
of the magnetisation near a N\'eel wall with transition angle
$2\arccos(1 - \gamma)$. For $\mu \in M_\gamma$, consider the convex set  
\[
W^{1, 2}_\mu(B_1^+(0))= \{w \in W^{1, 2}(B_1^+(0)) \colon w(x_1, 0) = \mu(x_1) \text{ for } x_1 \in (-1, 1) \text{ and } w \le 1 - \gamma \textrm{ on } \partial^+ B_1(0)\}. 
\]
Clearly $W^{1, 2}_\mu(B_1^+(0))\neq \emptyset$ and there exists a unique
function where the infimum
\[
\inf_{w \in W_\mu^{1, 2}(B_1^+(0))} \int_{B_1^+(0)} |\nabla w|^2 \, dx
\]
is attained, owing to the strict convexity of the Dirichlet functional.

Now we define the following modified energy functional for $\mu \in M_\gamma$:
\[
E_\epsilon^\gamma(\mu) = \frac{\epsilon}{2} \int_{-1}^1 \frac{(\mu')^2}{1 - \mu^2} \, dx_1 + 
\frac{1}{2} \inf_{w \in W_\mu^{1, 2}(B_1^+(0))} \int_{B_1^+(0)} |\nabla w|^2 \, dx.
\]

\begin{proposition}
\label{prop_min}
The functional $E_\epsilon^\gamma$ admits a unique minimizer 
$\mu \in M_\gamma$, which satisfies $\mu(\pm 1) = 1 - \gamma$
and $1\ge \mu \ge 1 - \gamma$ in $(-1,1)$. Moreover, if
$v \in W_\mu^{1, 2}(B_1^+(0))$ is the unique function with
\[
\int_{B_1^+(0)} |\nabla v|^2 \, dx = \inf_{w \in W_\mu^{1, 2}(B_1^+(0))} \int_{B_1^+(0)} |\nabla w|^2 \, dx,
\]
then $1-\gamma\leq v\leq 1$ in $B_1^+(0)$ and $v$ is the unique
solution in $W^{1, 2}(B_1^+(0))$ of the boundary value problem
\begin{alignat}{2}
\Delta v & = 0 &\quad& \text{in $B_1^+(0)$}, \label{eqn:core_harmonicv} \\
v(x_1, 0) & = \mu(x_1) && \text{for $x_1 \in (-1, 1)$}, \label{eqn:core_boundary1v} \\
v & = 1 - \gamma && \text{on $\partial^+ B_1(0)$}. \label{eqn:core_boundary2v}
\end{alignat}
\end{proposition}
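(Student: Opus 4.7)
My plan is to establish existence of $\mu$ by the direct method in the calculus of variations, uniqueness by strict convexity of $E_\epsilon^\gamma$, and the pointwise bounds together with the identification of $v$ by monotone truncation arguments combined with uniqueness for the Dirichlet problem.

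For existence, I take a minimising sequence $(\mu_n)\subset M_\gamma$. Since $W^{1,2}(-1,1)\hookrightarrow C^0([-1,1])$ and the exchange integral is finite, $|\mu_n|\leq 1$ pointwise; I then replace $\mu_n$ by $\tilde\mu_n=\max(\mu_n,1-\gamma)\in M_\gamma$, which decreases the exchange part because the integrand vanishes wherever the truncation is active. For the stray-field term I truncate the associated minimiser $v_{\mu_n}$ analogously: the candidate $\tilde w_n=\max(v_{\mu_n},1-\gamma)$ has trace $\tilde\mu_n$ on the bottom, equals $1-\gamma$ on $\partial^+B_1(0)$ (because $v_{\mu_n}\leq 1-\gamma$ there), and satisfies $\int|\nabla\tilde w_n|^2\leq\int|\nabla v_{\mu_n}|^2$ by the standard truncation inequality. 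Thus $\tilde w_n\in W_{\tilde\mu_n}^{1,2}(B_1^+(0))$ and $E_\epsilon^\gamma(\tilde\mu_n)\leq E_\epsilon^\gamma(\mu_n)$, so I may assume $1-\gamma\leq\mu_n\leq 1$. The energy bound then yields a uniform $W^{1,2}$-bound on $\mu_n$ and on $v_{\mu_n}$; passing to a subsequence converging weakly in $W^{1,2}$ and uniformly, I obtain limits $\mu\in M_\gamma$ and $v\in W_\mu^{1,2}(B_1^+(0))$, and lower semicontinuity of both quadratic pieces shows that $\mu$ minimises $E_\epsilon^\gamma$.

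Uniqueness follows from strict convexity. The exchange integrand $(v,w)\mapsto v^2/(1-w^2)$ is strictly convex on $\R\times(-1,1)$ (the fact already used in Proposition~\ref{pro:exist_unique}), and the stray-field part is convex in $\mu$ because, for $\mu_0,\mu_1\in M_\gamma$ with minimisers $v_0,v_1$, the combination $(1-t)v_0+tv_1$ is an admissible competitor for $(1-t)\mu_0+t\mu_1$ (the upper-bound constraint is linear). With uniqueness in hand, the truncation argument forces $\mu\geq 1-\gamma$, and combined with $\mu(\pm 1)\leq 1-\gamma$ this gives $\mu(\pm 1)=1-\gamma$. The same truncation trick applied directly to $v$ yields $1-\gamma\leq v\leq 1$ in $B_1^+(0)$: $\max(v,1-\gamma)$ and $\min(v,1)$ are both admissible competitors in $W_\mu^{1,2}(B_1^+(0))$ (the bottom condition uses $1-\gamma\leq\mu\leq 1$) with no larger Dirichlet energy. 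The trace of $v$ on $\partial^+B_1(0)$ is then both $\leq 1-\gamma$ (by the constraint) and $\geq 1-\gamma$ (by the interior bound), hence $v=1-\gamma$ there; so $v$ solves the Dirichlet problem \eqref{eqn:core_harmonicv}--\eqref{eqn:core_boundary2v}, and uniqueness in $W^{1,2}(B_1^+(0))$ is standard.

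The main delicacy is that the stray-field part of $E_\epsilon^\gamma$ is itself defined as an infimum, so every monotone truncation of $\mu$ must be paired with a compatible truncation of a competitor for the stray-field subproblem, and each convex-combination argument must use a competitor built from competitors on either side. The key observation making this work is that the constraint $w\leq 1-\gamma$ on $\partial^+B_1(0)$ is preserved both by lower truncation of $w$ at the level $1-\gamma$ (which turns the inequality into the equality $w=1-\gamma$ on the top arc) and by convex combinations, so truncations and averages of admissible competitors remain admissible.
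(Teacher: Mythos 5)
Your proposal is correct and follows essentially the same route as the paper: direct method for existence, cut-off (truncation) at the levels $1-\gamma$ and $1$ for the pointwise bounds on $\mu$ and $v$ (hence $\mu(\pm 1)=1-\gamma$ and the identification of $v$ with the solution of the Dirichlet problem), and convexity for uniqueness. You simply spell out the details that the paper compresses into ``a simple cut-off argument,'' in particular the useful observation that each truncation of $\mu$ must be paired with the corresponding truncation of the stray-field competitor and that admissibility of the top-arc constraint is preserved under both truncation and convex combination.
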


\begin{proof}
The direct method of the calculus of variations leads
to existence of a minimizer $\mu \in M_\gamma$ of $E_\epsilon^\gamma$.
Moreover, if $\mu \in M_\gamma$ is a minimiser of $E_\epsilon^\gamma$,
then using a simple cut-off argument at $1-\gamma$ and $1$,
we see that $\mu(\pm 1) = 1 - \gamma$
and $1\ge \mu \ge 1 - \gamma$ in $(-1,1)$. This implies that the corresponding minimizer $v$ of the Dirichlet energy
satisfies $1-\gamma\leq v\leq 1$ in $B_1^+(0)$ and solves
\eqref{eqn:core_harmonicv}--\eqref{eqn:core_boundary2v}. Obviously, this boundary value problem has a unique solution in $W^{1, 2}(B_1^+(0))$.
Since the function $(x_1, x_2) \mapsto \frac{x_1^2}{1-x_2^2}$, for $(x_1, x_2) \in \R \times (-1, 1)$,
is strictly convex, we deduce that $E_\epsilon^\gamma$ admits in fact a {\it unique} minimizer 
$\mu \in M_\gamma$. 
\end{proof}

Let $\mu \in M_\gamma$ be the minimizer of $E_\epsilon^\gamma$ and $v$ be the
solution of \eqref{eqn:core_harmonicv}--\eqref{eqn:core_boundary2v}. 
Since $\curl \nabla^\perp v = 0$, there exists a function
$u \in W^{1, 2}(B_1^+(0))$ with $\nabla^\perp v = - \nabla u$.
This is then a solution of
\begin{align}
\Delta u & = 0 \quad \text{in $B_1^+(0)$}, \label{eqn:core_harmonic} \\
\dd{u}{x_2} & = - \mu' \quad \text{on $(-1, 1) \times \{0\}$}, \label{eqn:core_bd1}\\
x \cdot \nabla u & = 0 \quad \text{on $\partial B_1^+(0)$} \label{eqn:core_boundary2}.
\end{align}
Note that $u$ is determined uniquely up to a constant
by these conditions. On the other hand, given a function
$u \in W^{1, 2}(B_1^+(0))$ satisfying \eqref{eqn:core_harmonic}--\eqref{eqn:core_boundary2},
we can reconstruct a corresponding function $v \in W_\mu^{1, 2}(B_1^+(0))$
with $\nabla^\perp v = -\nabla u$. It follows that the minimiser $\mu$ of
$E_\epsilon^\gamma$ automatically minimises the quantity
\[
\frac{\epsilon}{2} \int_{-1}^1 \frac{(\mu')^2}{1 - \mu^2} \, dx_1 + \frac{1}{2} \int_{B_1^+(0)} |\nabla u|^2 \, dx,
\]
where $u$ is determined (up to a constant) by
\eqref{eqn:core_harmonic}--\eqref{eqn:core_boundary2}. Since $\mu$ plays the role of the first component
of the magnetisation near a N\'eel wall, then $u$ roughly
corresponds to a stray-field potential associated to $\mu$ in $B_1^+(0)$.

The Euler-Lagrange equation for the minimiser of $E_\epsilon^\gamma$ is
therefore
\begin{equation} \label{eqn:core_Euler-Lagrange}
\epsilon \varphi'' = u' \sin \varphi \quad \text{in $(-1, 0)$ and $(0, 1)$}
\end{equation}
for any continuous function $\varphi : (-1, 1) \to \R$ with $\mu = \cos \varphi$,
similarly to Section \ref{sect:Euler-Lagrange}.

\subsection{Energy estimates}

We first prove the following preliminary estimate with
a direct construction. This result is similar to \cite{DKO} (see also \cite{Ig}).

\begin{proposition} \label{prop:core_construction}
Let $\beta \in (0,1)$. Then there exists a constant $C_0>0$ (depending on $\beta$) such that for
every $\gamma \in (\beta, 2 - \beta)$
and every $\epsilon \in (0, \frac{1}{2}]$,
\[
\inf_{M_\gamma} E_\epsilon^\gamma \le \frac{\pi \gamma^2}{2\log \frac{1}{\delta}} + \frac{C_0}{\left(\log \frac{1}{\delta}\right)^2}.
\]
\end{proposition}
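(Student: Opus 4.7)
The plan is to exhibit an explicit competitor pair $(\mu, w)$ with $\mu \in M_\gamma$ and $w \in W_\mu^{1,2}(B_1^+(0))$ and to bound $\frac{\epsilon}{2}\int_{-1}^{1}(\mu')^2/(1-\mu^2)\,dx_1 + \frac{1}{2}\int_{B_1^+(0)}|\nabla w|^2\,dx$ term by term. Abbreviate $L = \log(1/\delta)$. I would split $B_1^+(0)$ into a tail annulus $\{\delta \le |x| \le 1\}$ and a core half-disk $B_\delta^+(0)$, and define
\[
w(x) = \begin{cases} 1 - \gamma + \dfrac{\gamma(1 - 1/L)}{L}\log\dfrac{1}{|x|} & \text{if } \delta \le |x| \le 1, \\[1.5ex] 1 - \dfrac{\gamma}{L}\Bigl(\dfrac{|x|}{\delta}\Bigr)^{2} & \text{if } |x| \le \delta. \end{cases}
\]
The tail piece is harmonic, equals $1 - \gamma$ on $\partial^+B_1(0)$, and takes the constant value $1 - \gamma/L$ on $\partial^+B_\delta(0)$; the core piece matches that same constant on $\partial^+B_\delta(0)$ and reaches $1$ at the origin. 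Its trace $\mu := w(\blank, 0)$ therefore belongs to $M_\gamma$, and by construction $w \in W_\mu^{1,2}(B_1^+(0))$.

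Direct radial computation gives $\int_{\{\delta \le |x| \le 1\}}|\nabla w|^2\,dx = \pi\gamma^2(1-1/L)^2/L$ and $\int_{B_\delta^+(0)}|\nabla w|^2\,dx = \pi\gamma^2/L^2$, so the magnetostatic half-sum equals $\pi\gamma^2/(2L) - \pi\gamma^2/(2L^2) + O(1/L^3)$. For the exchange part the key uniform lower bound is $1 - \mu^2 \ge \beta(1-\mu) \ge \beta\gamma/L$ on the tail (using $\gamma \le 2 - \beta$ and $\mu \le 1 - \gamma/L$ there); combined with $(\mu')^2 \le \gamma^2/(x_1^2L^2)$, the substitution $t = \log(1/|x_1|)$ yields $\int_{\{\delta \le |x_1| \le 1\}}(\mu')^2/(1 - \mu^2)\,dx_1 \le C(\beta)/(\delta L)$. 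In the core, the integrand simplifies to $4\gamma/(L\delta^2(2 - (\gamma/L)(x_1/\delta)^2))$, which is bounded by $4\gamma/(L\delta^2)$ and integrates to $O(\gamma/(L\delta))$. Multiplying each by $\epsilon/2$ and using $\epsilon/\delta \le 1/L$, the total exchange contribution is $O(1/(\beta L^2))$.

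The only nontrivial design choice is the factor $(1 - 1/L)$ in the tail, and this is the main obstacle. Without it, the tail profile $\mu_T(x_1) = 1 - \gamma + \gamma\log(1/|x_1|)/L$ would saturate at $\mu = 1$ precisely at $|x_1| = \delta$, forcing $1 - \mu^2$ to vanish at the interface and making $(\mu')^2/(1 - \mu^2)$ non-integrable on any neighbourhood of $\pm\delta$. Inserting the factor $(1-1/L)$ lowers the peak tail value to $1 - \gamma/L$, so $1 - \mu^2$ is bounded below by a multiple of $1/L$ uniformly across the interface; a quadratic core then bridges smoothly from $1 - \gamma/L$ on $\partial^+B_\delta(0)$ to $1$ at the origin without re-creating the singularity. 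Adding the exchange bound $O(1/(\beta L^2))$ to the magnetostatic bound $\pi\gamma^2/(2L) + O(1/L^2)$ then yields the claimed inequality with $C_0$ depending only on $\beta$.
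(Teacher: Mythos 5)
Your construction is correct and follows essentially the same route as the paper: a direct competitor with a logarithmic tail truncated at the scale $\delta$, extended radially to the half-disk, with the exchange term controlled by a uniform lower bound on $1-\mu^2$ of order $1/\log\frac1\delta$ near the core; the only difference is that you cap the core with an explicit quadratic (and the damping factor $1-1/L$) where the paper uses the smooth profile $1-\gamma\,\frac{\log(x_1^2+\delta^2)-\log\delta^2}{\log(1+\delta^2)-\log\delta^2}$ throughout. Two harmless imprecisions: in the core the factor $2-\frac{\gamma}{L}\bigl(\frac{x_1}{\delta}\bigr)^2$ is only bounded below by $\beta$ (not by $1$) when $\gamma$ is close to $2-\beta$ and $L$ close to $1$, so the pointwise bound should read $4\gamma/(\beta L\delta^2)$; and $\epsilon/\delta=1/\log\frac1\epsilon\le C/L$ holds with a universal constant $C$ on $(0,\tfrac12]$ rather than with $C=1$ --- neither affects the conclusion.
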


\begin{proof}
Consider the function $\mu:(-1, 1)\to [1-\gamma, 1]$ given by
\[
\mu(x_1) = 1 - \gamma \frac{\log (x_1^2 + \delta^2) - \log \delta^2}{\log(1 + \delta^2) - \log \delta^2}\in [1-\gamma, 1] \quad \textrm{for } x_1\in (-1, 1).
\]
Then $1 + \mu \ge 2 - \gamma$ and thus
\[
1 - (\mu(x_1))^2 \ge \gamma(2 - \gamma) \frac{\log (x_1^2 + \delta^2) - \log \delta^2}{\log(1 + \delta^2) - \log \delta^2}.
\]
Therefore,
\[
\begin{split}
\int_{-1}^1 \frac{(\mu')^2}{1 - \mu^2} \, dx_1 & \le \frac{2\gamma^2}{\gamma (2 - \gamma) \log \sqrt{\frac{1}{\delta^2} + 1}} \int_{-1}^1 \frac{x_1^2}{(x_1^2 + \delta^2)^2 \log \left(\frac{x_1^2}{\delta^2} + 1\right)} \, dx_1 \\
& \le \frac{2\gamma^2}{\delta \gamma (2 - \gamma) \log \sqrt{\frac{1}{\delta^2} + 1}} \int_{-\infty}^\infty  \frac{t^2}{(t^2 + 1)^2 \log (t^2 + 1)} \, dt.
\end{split}
\]

Define $w : B_1^+(0) \to \R$ by
\[
w(r \cos \theta, r \sin \theta) = \mu(r)
\]
for $0 < r \le 1$ and $0 < \theta < \pi$. Then
\[
\begin{split}
\int_{B_1^+(0)} |\nabla w|^2 \, dx & = \frac{\pi \gamma^2}{\left(\log \sqrt{\frac{1}{\delta^2} + 1}\right)^2} \int_0^1 \frac{r^3}{(r^2 + \delta^2)^2} \, dr \\
& = \frac{\pi \gamma^2}{2\left(\log \sqrt{\frac{1}{\delta^2} + 1}\right)^2} \int_{\delta^2}^{1 + \delta^2} \frac{t - \delta^2}{t^2} \, dt \le \frac{\pi \gamma^2}{\log \sqrt{\frac{1}{\delta^2} + 1}}.
\end{split}
\]
Hence
\[
\begin{split}
E_\epsilon^\gamma(\mu) & \le \frac{\pi \gamma^2}{2\log \sqrt{\frac{1}{\delta^2} + 1}} \\
& \quad + \frac{\gamma^2}{\gamma (2 - \gamma) \log \frac{1}{\epsilon} \log \sqrt{\frac{1}{\delta^2} + 1}} \int_{-\infty}^\infty  \frac{t^2}{(t^2 + 1)^2 \log (t^2 + 1)} \, dt,
\end{split}
\]
which implies the statement of the proposition.
\end{proof}

We can match the leading order term in this inequality with an
estimate from below. Moreover, we obtain more information about the
behaviour of the minimiser $\mu$ of $E_\epsilon^\gamma$: in particular, we have a uniform $\dot{W}^{1,2}$-estimate for the difference
between the rescaled stray field potential $u \log \frac 1 \delta$ (associated to $\mu$ via \eqref{eqn:core_harmonic}--\eqref{eqn:core_boundary2})
and the map
\[
(x_1, x_2) \mapsto \gamma \left(\arctan\left(\frac{x_2}{x_1}\right) - \frac{\pi x_1}{2|x_1|}\right), \quad (x_1, x_2)\in B_1^+(0).
\]
First, however, we need some information on the regularity of solutions of the Euler-Lagrange equation, especially at the boundary.

\begin{lemma} \label{lemma:core_regularity}
Let $\gamma \in (0, 2)$. Suppose that $\varphi \in W^{1, 2}(-1, 1)$ with $\cos \varphi(-1) = \cos \varphi(1) = 1 - \gamma$.
Further suppose that $u \in W^{1, 2}(B_1^+(0))$ is a function such that \eqref{eqn:core_harmonic}--\eqref{eqn:core_Euler-Lagrange}
are satisfied for $\mu = \cos \varphi$. Then $\nabla u$ is continuous in $B_1^+(0)$ and has a continuous extension to
$\partial B_1^+(0) \backslash \{0\}$.
\end{lemma}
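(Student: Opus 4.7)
The plan is to bootstrap the regularity of $\mu=\cos\varphi$ and $u$ starting from the hypothesis $\varphi\in W^{1,2}(-1,1)$, alternating elliptic estimates for $u$ with $1$-D Sobolev regularity for $\varphi$ coming from the ODE \eqref{eqn:core_Euler-Lagrange}. I split $\partial B_1^+(0)\setminus\{(0,0)\}$ into the open curved arc, the open flat diameter (minus the origin), and the two corners $(\pm 1,0)$. Interior regularity of $\nabla u$ in $B_1^+(0)$ is immediate since $u$ is harmonic. For the open arc, the condition $x\cdot\nabla u=0$ says $\partial u/\partial r=0$ on $|z|=1$, so the Kelvin reflection $\tilde u(z)=u(1/\bar z)$ has matching tangential and radial derivatives across $|z|=1$ and is harmonic on each side; hence $\tilde u$ is harmonic in a neighborhood of the open arc in $\R_+^2$, and $\nabla u$ extends continuously there.

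For the open flat diameter, let $K\subset(-1,0)\cup(0,1)$ be compact. Starting from $\mu'=-\sin\varphi\cdot\varphi'\in L^2(K)$, standard Neumann elliptic regularity ($L^2$ Neumann data on a smooth boundary segment) gives $u\in H^{3/2}$ locally up to $K\times\{0\}$, so the trace $u'=\partial u/\partial x_1(\,\cdot\,,0)$ is in $L^2(K)$. Substituting into \eqref{eqn:core_Euler-Lagrange} and using $\sin\varphi\in L^\infty$ yields $\varphi''\in L^2(K)$, hence $\varphi\in W^{2,2}(K)\subset C^{1,1/2}(K)$ and $\mu'\in C^{0,1/2}(K)$. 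A Schauder Neumann boundary estimate then upgrades $\nabla u$ to $C^{0,1/2}$ up to $K\times\{0\}$, giving continuity of $\nabla u$ on all of $(-1,1)\times\{0\}\setminus\{(0,0),(\pm 1,0)\}$.

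The main obstacle is the analysis at the corners $(\pm 1,0)$, where $B_1^+(0)$ has a right angle and the flat-side Neumann datum $-\mu'(\pm 1)$ need not vanish, so a direct Schauder estimate does not apply. My plan is to exploit the Kelvin reflection $\tilde u$ from the first step, which is defined on a full upper half-disc around each corner and turns the corner into an interior point of a smooth boundary. A chain-rule computation gives the Neumann datum of $\tilde u$ on the real axis as
\[
\tilde g(x_1)=\begin{cases} -\mu'(x_1), & |x_1|<1, \\ -\mu'(1/x_1)/x_1^2, & |x_1|>1. \end{cases}
\]
Applied to $\tilde u$ on a one-sided interval $[1-\delta,1]$ adjacent to the corner, the previous paragraph's bootstrap first yields $u'\in L^2(1-\delta,1)$ from the $L^2$ reflected Neumann datum, then $\varphi\in W^{2,2}(1-\delta,1)\subset C^{1,1/2}([1-\delta,1])$ up to the endpoint, so $\mu'(1)$ is well defined; both formulas above then converge to $-\mu'(1)$ as $x_1\to 1^\pm$, giving $\tilde g\in C^{0,1/2}$ on a neighborhood of $1$. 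A final Schauder estimate on the smooth half-disc around $(1,0)$ produces $\nabla\tilde u\in C^{0,1/2}$ up to $(1,0)$, and since $\tilde u=u$ on $B_1^+(0)$, $\nabla u$ extends continuously to $(1,0)$; the argument at $(-1,0)$ is identical.
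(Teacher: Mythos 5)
Your proposal is correct and follows essentially the same route as the paper: the Kelvin reflection of $u$ across the circular arc (equivalently, the odd-inversion extension of $\mu-1+\gamma$ to $\R$) reduces everything to a half-plane Neumann problem, after which one bootstraps between the trace regularity of $u'$ and the Euler--Lagrange equation $\epsilon\varphi''=u'\sin\varphi$ to get H\"older continuity of $\mu'$ and then of $\nabla u$. The only cosmetic differences are that you treat the arc, the flat part, and the corners separately and work with $u'\in L^2$ on the trace, whereas the paper performs the reflection once globally and settles for $u'\in L^p_{\loc}$, $p<2$, which suffices for the same bootstrap.
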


\begin{proof}
Let $\tilde{\mu} = \cos \varphi - 1 + \gamma$ and consider the extension of $u$
to $\R_+^2$ and of $\tilde{\mu}$ to $\R$ given by
\[
u(x) = u\left(\frac{x}{|x|^2}\right) \quad \text{for } x \in \R_+^2 \text{ with } |x| > 1
\]
and
\[
\tilde{\mu}(x_1) = - \tilde{\mu}\left(\frac{1}{x_1}\right) \quad \text{for } x_1 \in (-\infty, -1) \cup (1, \infty).
\]
Then we have $\Delta u = 0$ in $\R_+^2$ and $\dd{u}{x_2} = - \tilde{\mu}'$ on $\R \times \{0\}$.
Since $\tilde{\mu}' \in L_\loc^2(\R)$, we conclude that
$u'(\blank, 0) \in L_\loc^p(\R)$ for any $p \in [1, 2)$
with standard regularity theory. Thus by \eqref{eqn:core_Euler-Lagrange}, we have
$\varphi'' \in L^p(-1, 1)$, and it follows that $\tilde{\mu}'$ is H\"older continuous on
$[-1, -r] \cup [r, 1]$ for every $r \in (0, 1)$. The extension is then locally H\"older continuous in $\R \backslash \{0\}$, and
using standard regularity theory once more, we conclude that $\nabla u$ is
continuous away from 0.
\end{proof}

\begin{theorem} \label{thm:core_energy_estimate}
For any $\beta \in (0, 1)$, there exists a constant $C>0$ (depending on $\beta$) such that the following holds true for every $\eps\in (0, \frac 1 2]$.
Suppose that $\gamma \in (\beta, 2 - \beta)$ and
$\mu \in M_\gamma$ is the minimiser of $E_\epsilon^\gamma$.
Then
\begin{equation} \label{eqn:core_energy_estimate}
\left|E_\epsilon^\gamma(\mu) - \frac{\pi \gamma^2}{2 \log \frac{1}{\delta}}\right| \le \frac{C}{\left(\log \frac{1}{\delta}\right)^2}. 
\end{equation}
Let $u \in W^{1, 2}(B_1^+(0))$ be the solution of
\eqref{eqn:core_harmonic}--\eqref{eqn:core_boundary2}.
Then
\begin{equation} \label{eqn:core_u_estimates}
\epsilon \int_{-1}^1 \frac{(\mu')^2}{1 - \mu^2} \, dx_1 + \int_{B_1^+(0) \backslash B_\delta(0)} \left|\nabla u(x) - \frac{\gamma x^\perp}{|x|^2 \log \frac{1}{\delta}}\right|^2 \, dx + \int_{B_\delta(0)} |\nabla u|^2\, dx \le \frac{C}{\left(\log \frac{1}{\delta}\right)^2}
\end{equation}
and
\begin{equation} \label{eqn:core_boundary_estimate}
\int_{\partial^+ B_1(0)} |\nabla u|^2 \, d\sigma \le \frac{C}{\left(\log \frac{1}{\delta}\right)^2}.
\end{equation}
\end{theorem}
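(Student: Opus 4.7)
The upper bound half of \eqref{eqn:core_energy_estimate} is Proposition~\ref{prop:core_construction}, so the bulk of the work is the matching lower bound on the magnetostatic part $\frac12\int_{B_1^+}|\nabla u|^2\,dx$; the estimates \eqref{eqn:core_u_estimates} and \eqref{eqn:core_boundary_estimate} will then fall out by expansion plus elliptic regularity. For the lower bound I would proceed by duality against an explicit comparison field. Introduce
\[
u_*(x)=\frac{\gamma}{\log(1/\delta)}\Bigl(\arctan(x_2/x_1)-\frac{\pi x_1}{2|x_1|}\Bigr),
\]
which is harmonic on $\R_+^2\setminus\{0\}$, purely angular (so $x\cdot\nabla u_*=0$ on every arc centred at $0$), and whose bottom trace equals $\mp\pi\gamma/(2\log(1/\delta))$ on $\{x_2=0,\ \pm x_1>0\}$. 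A direct polar computation (analogous to Lemma~\ref{H^1_norm}) gives $\int_{B_1^+\setminus B_r}|\nabla u_*|^2\,dx=\pi\gamma^2\log(1/r)/(\log(1/\delta))^2$, which matches the target $\pi\gamma^2/\log(1/\delta)$ at $r=\delta$. Multiplying $u_*$ by a smooth radial cut-off $\chi(|x|)$ with $\chi\equiv 0$ on $[0,\delta]$ and $\chi\equiv 1$ on $[2\delta,1]$ produces a test field $\tilde u_*\in W^{1,2}(B_1^+)$ that still satisfies $x\cdot\nabla\tilde u_*=0$ on $\partial^+B_1$ and whose Dirichlet energy equals $\pi\gamma^2/\log(1/\delta)+O((\log(1/\delta))^{-2})$.

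Integrating by parts against $u$ using \eqref{eqn:core_harmonic}--\eqref{eqn:core_boundary2} and the normalisation $\mu(0)=1$, $\mu(\pm 1)=1-\gamma$ from Proposition~\ref{prop_min} then yields
\[
\int_{B_1^+}\nabla u\cdot\nabla\tilde u_*\,dx=\int_{-1}^1\mu'(x_1)\,\tilde u_*(x_1,0)\,dx_1=\frac{\pi\gamma^2}{\log(1/\delta)}+O\bigl((\log(1/\delta))^{-2}\bigr),
\]
where the core contribution $\int_\delta^{2\delta}\mu\chi'\,dx_1\approx 1$ is controlled via Lemma~\ref{lemma:core_estimate} applied to the Euler-Lagrange equation \eqref{eqn:core_Euler-Lagrange} (which, given the a priori upper bound on $F$, forces $\mu$ to be close to $1$ on the core). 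Cauchy-Schwarz then gives $\int|\nabla u|^2\ge\pi\gamma^2/\log(1/\delta)-C(\log(1/\delta))^{-2}$, and combining with Proposition~\ref{prop:core_construction} proves \eqref{eqn:core_energy_estimate} and, by subtraction, the exchange-energy bound in \eqref{eqn:core_u_estimates}. The $\dot W^{1,2}$ estimate in \eqref{eqn:core_u_estimates} follows by expanding
\[
\int_{B_1^+}|\nabla(u-\tilde u_*)|^2\,dx=\int|\nabla u|^2-2\int\nabla u\cdot\nabla\tilde u_*+\int|\nabla\tilde u_*|^2;
\]
each of the three terms matches $\pi\gamma^2/\log(1/\delta)$ up to $O((\log(1/\delta))^{-2})$, hence so does the signed sum, and replacing $\tilde u_*$ by $u_*$ outside $B_\delta$ is lossless.

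The bound on $\int_{B_\delta}|\nabla u|^2$ requires a finer argument: I would pick a good radius $r\in(\delta,2\delta)$ by Fubini on the outer $\dot W^{1,2}$ bound, and then combine the Pohozaev identity (Lemma~\ref{lemma:Pohozaev}) with \eqref{eqn:core_estimate1} and the explicit form of $u_*$ on $\partial^+B_r$ to propagate control into $B_r$. For the boundary trace \eqref{eqn:core_boundary_estimate}, on the collar $B_1^+\setminus\overline{B_{1/2}^+}$ the datum $\mu$ stays uniformly away from $\pm 1$ (since $\mu\in[1-\gamma,1]$ and $\gamma\in(\beta,2-\beta)$), so $\sin\varphi$ is bounded below there; one can extend $u$ by conformal reflection across $\partial^+B_1$ (which is permitted by the Neumann condition) and apply Lemma~\ref{lemma:higher_estimates} in a two-sided neighbourhood, obtaining a $W^{2,2}$-bound of size $(\log(1/\delta))^{-2}$, whence a standard trace inequality gives the claim. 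The main obstacle throughout is preserving the sharp $(\log(1/\delta))^{-2}$ scale: every cut-off, Fubini choice, and Pohozaev application must be engineered so as not to lose a logarithmic factor, and the delicate step is the logarithmically failing interpolation hidden in the cross term $\int\mu'\tilde u_*\,dx_1$.
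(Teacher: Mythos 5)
Your treatment of \eqref{eqn:core_energy_estimate} and of the first two terms in \eqref{eqn:core_u_estimates} is essentially the paper's argument: the paper tests against the same comparison field (its $\xi$ equals your $u_*$ for $|x|\ge\delta$ and is interpolated linearly in $r$ on $B_\delta^+$ instead of being cut off), derives the cross-term lower bound $\int\nabla\xi\cdot\nabla u\ge\pi\gamma^2/\log\frac1\delta-C(\log\frac1\delta)^{-2}$ using $\mu(0)=1$, $\mu(\pm1)=1-\gamma$ and Lemma~\ref{lemma:core_estimate} to control the core contribution, and then expands $\int|\nabla u-\nabla\xi|^2$ exactly as you do. Two of your peripheral steps, however, are off.

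First, the bound on $\int_{B_\delta}|\nabla u|^2$ does not need a ``finer argument,'' and the one you propose would not work: the Pohozaev identity of Lemma~\ref{lemma:Pohozaev} controls $\epsilon\int_{-r}^r(\varphi')^2$ in terms of boundary data on $\partial^+B_r$; it says nothing about $\int_{B_r^+}|\nabla u|^2$, so it cannot ``propagate control into $B_r$.'' But your own expansion already finishes the job: once $\int_{B_1^+}|\nabla(u-\tilde u_*)|^2\le C(\log\frac1\delta)^{-2}$ is established, the fact that $\tilde u_*\equiv0$ on $B_\delta^+$ (or that $\int_{B_\delta^+}|\nabla\xi|^2=O((\log\frac1\delta)^{-2})$ for the paper's interpolant) gives $\int_{B_\delta^+}|\nabla u|^2\le C(\log\frac1\delta)^{-2}$ immediately.

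Second, your route to \eqref{eqn:core_boundary_estimate} has a genuine gap. The claim that $\sin\varphi$ is bounded below on the collar ``since $\mu\in[1-\gamma,1]$'' is not justified: that inclusion allows $\mu=1$ (i.e.\ $\sin\varphi=0$) anywhere, and ruling this out near $x_1=\pm1$ requires an argument such as Lemma~\ref{lemma:m1_estimate} combined with the already-proved $\dot W^{1,2}$ estimate. Moreover, the reflection-plus-$W^{2,2}$-plus-trace strategy must be controlled up to the corner points $(\pm1,0)$ of $\partial B_1^+$, where interior estimates for the reflected function do not apply; a trace inequality on a two-sided collar that avoids the corners does not bound the full integral over $\partial^+B_1$. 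The paper avoids all of this: since $x\cdot\nabla u=0$ on $\partial^+B_1$, the Pohozaev identity at $r\nearrow1$ (justified by the boundary regularity of Lemma~\ref{lemma:core_regularity}) reads
\[
\int_{\partial^+B_1(0)}|\nabla u|^2\,d\sigma=\epsilon\int_{-1}^1(\varphi')^2\,dx_1-\epsilon(\varphi'(1))^2-\epsilon(\varphi'(-1))^2\le\epsilon\int_{-1}^1(\varphi')^2\,dx_1\le\frac{C}{\left(\log\frac1\delta\right)^2},
\]
using the exchange-energy bound you have already obtained. I would replace your last paragraph by this observation.
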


\begin{proof}
We already know, by Proposition \ref{prop:core_construction},
that there exists a constant $C_0>0$, depending only on $\beta$, such
that
\begin{equation} \label{eqn:upper_energy_estimate}
E_\epsilon^\gamma(\mu) \le \frac{\pi \gamma^2}{2 \log \frac{1}{\delta}} + \frac{C_0}{\left(\log \frac{1}{\delta}\right)^2}.
\end{equation}
We want to prove an estimate from below of the same order. To this end,
we choose some $\varphi \in W^{1, 2}(-1, 1)$ with $\mu = \cos \varphi$.
Consider the function $\xi : B_1^+(0) \to \R$,
defined in polar coordinates by
\[
\xi(r \cos \theta, r \sin \theta) = \begin{cases}
\displaystyle \frac{\gamma (\theta - \pi/2)}{\log \frac{1}{\delta}} & \text{if $r \ge \delta$}, \\[4\jot]
\displaystyle \frac{\gamma r(\theta - \pi/2)}{\delta \log \frac{1}{\delta}} & \text{if $0 < r < \delta$}.
\end{cases}
\] 
According to Proposition \ref{prop_min}, we have
\[
\begin{split}
\frac{\gamma^2 \pi}{\log \frac{1}{\delta}} & = \frac{\gamma \pi}{2\log \frac{1}{\delta}} \left(\int_{-1}^0 \mu' \, dx_1 - \int_0^1 \mu' \, dx_1\right) \\
& = \int_{-1}^1 \xi(x_1, 0) \mu'(x_1) \, dx_1 - \int_{-\delta}^\delta \left(\frac{\gamma \pi x_1}{2|x_1| \log \frac{1}{\delta}} + \xi(x_1, 0)\right) \mu'(x_1) \, dx_1.
\end{split}
\]
Using Lemma \ref{lemma:core_estimate} and \eqref{eqn:upper_energy_estimate},
we find a constant $C_1 = C_1(\beta)$
such that
\[
\left|\int_{-\delta}^\delta \left(\frac{\gamma \pi x_1}{2|x_1| \log \frac{1}{\delta}} + \xi(x_1, 0)\right) \mu'(x_1) \, dx_1\right| \le \frac{\gamma \pi}{2\log \frac{1}{\delta}} \int_{-\delta}^\delta |\varphi'| |\sin \varphi| \, dx_1 \le \frac{C_1}{\left(\log \frac{1}{\delta}\right)^2}.
\]
Moreover, by \eqref{eqn:core_harmonic}--\eqref{eqn:core_boundary2}, we have
\[
\int_{-1}^1 \xi(x_1, 0) \mu'(x_1) \, dx_1 {=} - \int_{(-1, 1) \times \{0\}} \xi \dd{u}{x_2} \, dx_1 = \int_{B_1^+(0)} \nabla \xi \cdot \nabla u \, dx.
\]
Combining these estimates, we obtain
\begin{equation} \label{eqn:mixed_term}
\int_{B_1^+(0)} \nabla \xi \cdot \nabla u \, dx \ge \frac{\gamma^2 \pi}{\log \frac{1}{\delta}} - \frac{C_1}{\left(\log \frac{1}{\delta}\right)^2}.
\end{equation}

We compute
\[
|\nabla \xi|^2=\frac{\gamma^2}{\delta^2 (\log \delta)^2}\left(1+(\theta-\frac{\pi}{2})^2\right) \quad \text{if $r<\delta$}
\]
and
\[
|\nabla \xi|^2=\frac{\gamma^2}{r^2 \left(\log \frac{1}{\delta}\right)^2} \quad \text{if $r>\delta$.}
\]
This implies
\[
\int_{B_1^+(0)} |\nabla \xi|^2 \, dx = \frac{\gamma^2 \pi}{\log \frac{1}{\delta}} + \frac{\gamma^2 \pi}{\left(\log \frac{1}{\delta}\right)^2} \left(\frac{1}{2} + \frac{\pi^2}{24}\right) \le \frac{\gamma^2 \pi}{\log \frac{1}{\delta}} + \frac{\gamma^2 \pi}{\left(\log \frac{1}{\delta}\right)^2}.
\]
Hence, by \eqref{eqn:upper_energy_estimate} and \eqref{eqn:mixed_term},
\[
\int_{B_1^+(0)} |\nabla u - \nabla \xi|^2 \, dx \le 2E_\epsilon^\gamma(\mu) - \frac{\gamma^2 \pi}{\log \frac{1}{\delta}} + \frac{2C_1 + \pi{\gamma^2}}{\left(\log \frac{1}{\delta}\right)^2} \le \frac{C_2}{\left(\log \frac{1}{\delta}\right)^2},
\]
where $C_2 = 2C_0 +2C_1+ \pi{\gamma^2}$. It follows in particular that
\[
\int_{B_\delta^+(0)} |\nabla u|^2 \, dx \le 2\int_{B_\delta^+(0)} |\nabla u - \nabla \xi|^2 \, dx + 2\int_{B_\delta^+(0)} |\nabla \xi|^2 \, dx \le \frac{2C_2 + 2\pi{\gamma^2}}{\left(\log \frac{1}{\delta}\right)^2}
\]
and
\[
\int_{B_1^+(0) \backslash B_\delta(0)} \left|\nabla u(x) - \frac{\gamma x^\perp}{|x|^2 \log \frac{1}{\delta}}\right|^2 \, dx\leq \frac{C_2}{\left(\log \frac{1}{\delta}\right)^2},
\]
since $\nabla \xi=\frac{\gamma x^\perp}{|x|^2 \log \frac{1}{\delta}}$ if $r>\delta$. 
Furthermore, we have
\[
\begin{split}
\int_{B_1^+(0)} |\nabla u|^2 \, dx & = \int_{B_1^+(0)} |\nabla u - \nabla \xi|^2 \, dx - \int_{B_1^+(0)} |\nabla \xi|^2 \, dx 
+ 2\int_{B_1^+(0)} \nabla \xi \cdot \nabla u \, dx \\
& \ge \frac{\gamma^2 \pi}{\log \frac{1}{\delta}} - \frac{\gamma^2 \pi + 2C_1}{\left(\log \frac{1}{\delta}\right)^2}.
\end{split}
\]
If we combine this with \eqref{eqn:upper_energy_estimate}, then we obtain
\[
\epsilon \int_{-1}^1 \frac{(\mu')^2}{1 - \mu^2} \, dx_1 \le 2E_\epsilon^\gamma(\mu) - 
\int_{B_1^+(0)} |\nabla u|^2 \, dx \le \frac{2C_0 + 2C_1 + \gamma^2 \pi}{\left(\log \frac{1}{\delta}\right)^2}.
\]
Thus we have proved inequalities \eqref{eqn:core_energy_estimate} and \eqref{eqn:core_u_estimates}.

Finally, we apply Lemma \ref{lemma:Pohozaev} and let $r\nearrow 1$. Then by Lemma \ref{lemma:core_regularity},
we obtain \eqref{eqn:core_boundary_estimate}.
\end{proof}

\subsection{Behaviour under small perturbations}

Next we want to understand how the number $\inf_{M_\gamma} E_\epsilon^\gamma$
changes when we vary $\gamma$, and how the energy
changes when we perturb the boundary condition
\eqref{eqn:core_boundary2}. In particular, we prove the
following statements.

\begin{proposition} \label{prop:change_gamma}
Let $\beta \in (0,1)$. There exists a constant $C(\beta)>0$ such that for all
$\gamma_1, \gamma_2 \in (\beta, 2 - \beta)$ and every $\eps\in (0, \frac 12]$,
\[
\inf_{M_{\gamma_2}} E_\epsilon^{\gamma_2} - \frac{\pi \gamma_2^2}{2\log \frac{1}{\delta}}\le \inf_{M_{\gamma_1}} E_\epsilon^{\gamma_1} - \frac{\pi \gamma_1^2}{2\log \frac{1}{\delta}} + \frac{C|\gamma_2 - \gamma_1|}{\left(\log \frac{1}{\delta}\right)^2}.
\]
\end{proposition}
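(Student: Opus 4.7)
The strategy is to produce an explicit competitor for the $\gamma_2$-problem by rescaling the minimiser of the $\gamma_1$-problem. Let $\mu_1 \in M_{\gamma_1}$ be the minimiser of $E_\epsilon^{\gamma_1}$ and let $v_1 \in W^{1,2}_{\mu_1}(B_1^+(0))$ be the unique Dirichlet minimiser associated to it, so that $E_\epsilon^{\gamma_1}(\mu_1) = \frac{\epsilon}{2}\int_{-1}^1 \frac{(\mu_1')^2}{1-\mu_1^2}\,dx_1 + \frac{1}{2}\int_{B_1^+(0)}|\nabla v_1|^2\,dx$. I define
\[
\mu_2 = 1 - \frac{\gamma_2}{\gamma_1}(1-\mu_1), \qquad w_2 = 1 - \frac{\gamma_2}{\gamma_1}(1-v_1).
\]
By Proposition \ref{prop_min}, $\mu_1 \in [1-\gamma_1,1]$ and $v_1 \in [1-\gamma_1,1]$. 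A direct computation shows $\mu_2(0)=1$, $\mu_2(\pm 1)=1-\gamma_2$ and $\mu_2 \in [1-\gamma_2,1]$, so $\mu_2 \in M_{\gamma_2}$. Similarly $w_2(\cdot,0)=\mu_2$ and $w_2 = 1-\gamma_2$ on $\partial^+ B_1(0)$, so $w_2 \in W^{1,2}_{\mu_2}(B_1^+(0))$ is an admissible competitor.

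Next I compute the two terms of $E_\epsilon^{\gamma_2}(\mu_2)$. Since $\nabla w_2 = \frac{\gamma_2}{\gamma_1}\nabla v_1$, the Dirichlet part is exactly $\frac{\gamma_2^2}{\gamma_1^2}\cdot\frac{1}{2}\int_{B_1^+(0)}|\nabla v_1|^2\,dx$. For the exchange part, using $(1-\mu_1) = \frac{\gamma_1}{\gamma_2}(1-\mu_2)$, a short algebraic manipulation gives
\[
\frac{(\mu_2')^2}{1-\mu_2^2} = \frac{\gamma_2(1+\mu_1)}{\gamma_1(1+\mu_2)}\cdot\frac{(\mu_1')^2}{1-\mu_1^2}, \qquad \frac{\gamma_2(1+\mu_1)}{\gamma_1(1+\mu_2)} - \frac{\gamma_2}{\gamma_1} = \frac{(1-\mu_2)(\gamma_2-\gamma_1)}{\gamma_1(1+\mu_2)}.
\]
Because $\gamma_1,\gamma_2 \in (\beta,2-\beta)$, we have $1+\mu_2 \ge 2-\gamma_2 \ge \beta$ and $1-\mu_2 \le \gamma_2 \le 2-\beta$, so the last factor is bounded by $C(\beta)|\gamma_2-\gamma_1|$. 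Thus
\[
E_\epsilon^{\gamma_2}(\mu_2) \le \left(\frac{\gamma_2}{\gamma_1} + C|\gamma_2-\gamma_1|\right) A + \frac{\gamma_2^2}{\gamma_1^2} B,
\]
where $A := \frac{\epsilon}{2}\int\frac{(\mu_1')^2}{1-\mu_1^2}\,dx_1$ and $B := \frac{1}{2}\int|\nabla v_1|^2\,dx$.

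To conclude, I rewrite the right-hand side as $A+B + \bigl(\frac{\gamma_2}{\gamma_1}-1+C|\gamma_2-\gamma_1|\bigr)A + \bigl(\frac{\gamma_2^2}{\gamma_1^2}-1\bigr)B$. The contribution from the exchange term is $\le C|\gamma_2-\gamma_1|\,A \le \frac{C|\gamma_2-\gamma_1|}{(\log(1/\delta))^2}$ by the exchange bound in \eqref{eqn:core_u_estimates}. For the Dirichlet contribution, the matching estimates \eqref{eqn:core_energy_estimate} and \eqref{eqn:core_u_estimates} together yield
\[
\left|B - \frac{\pi\gamma_1^2}{2\log\frac{1}{\delta}}\right| \le \frac{C}{\left(\log\frac{1}{\delta}\right)^2},
\]
so $\bigl(\frac{\gamma_2^2}{\gamma_1^2}-1\bigr)B \le \frac{\pi(\gamma_2^2-\gamma_1^2)}{2\log(1/\delta)} + \frac{C|\gamma_2-\gamma_1|}{(\log(1/\delta))^2}$, where the sign bookkeeping must be done carefully in the cases $\gamma_2>\gamma_1$ and $\gamma_2<\gamma_1$ but succeeds in both. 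Adding everything and using $A+B = E_\epsilon^{\gamma_1}(\mu_1) = \inf_{M_{\gamma_1}}E_\epsilon^{\gamma_1}$, after rearranging $\frac{\pi\gamma_2^2}{2\log(1/\delta)}$ to the left, one arrives at the claimed inequality.

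The main obstacle is really the algebraic/accounting step of turning the multiplicative perturbation of the exchange density into an additive error of the correct order: the bound has to absorb $\frac{\pi(\gamma_2^2-\gamma_1^2)}{2\log(1/\delta)}$ exactly from the Dirichlet part (which forces the use of the two-sided estimate for $B$), leaving only a next-order error $O(|\gamma_2-\gamma_1|/(\log(1/\delta))^2)$; this is precisely what the exchange estimate in Theorem~\ref{thm:core_energy_estimate} allows, since the exchange term $A$ is already at the correct lower order $O(1/(\log(1/\delta))^2)$.
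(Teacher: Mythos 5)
Your proof is correct and takes essentially the same route as the paper's: the same affine rescaling competitor $\mu_2 = 1 - \tfrac{\gamma_2}{\gamma_1}(1-\mu_1)$, $v_2 = 1 - \tfrac{\gamma_2}{\gamma_1}(1-v_1)$, with the error terms controlled by the two-sided estimates of Theorem~\ref{thm:core_energy_estimate}. The only differences are in the bookkeeping (you bound the exchange factor by $\tfrac{\gamma_2}{\gamma_1}+C|\gamma_2-\gamma_1|$ rather than $\tfrac{\gamma_2^2}{\gamma_1^2}+C|\gamma_2-\gamma_1|$ and work directly with $A$ and $B$ instead of the paper's auxiliary function $g$), which incidentally lets you avoid the paper's preliminary reduction to small $|\gamma_2-\gamma_1|$.
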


In other words, the function
\begin{equation} \label{eqn:def_g}
g: (0, 1) \to \R, \ \gamma \mapsto \inf_{M_{\gamma}} E_\epsilon^{\gamma} - \frac{\pi \gamma^2}{2\log \frac{1}{\delta}},
\end{equation}
is locally Lipschitz continuous with Lipschitz constant of order $O(1/(\log \delta)^2)$.

\begin{proof}
First note that it suffices to prove the inequality when $|\gamma_2 - \gamma_1|$ is small,
for otherwise it follows from Theorem \ref{thm:core_energy_estimate}.

Let $\mu_1 \in M_{\gamma_1}$ be the minimiser of $E_\epsilon^{\gamma_1}$
and let $v_1 \in W^{1, 2}(B_1^+(0))$
be the corresponding solution of \eqref{eqn:core_harmonicv}--\eqref{eqn:core_boundary2v} in Proposition \ref{prop_min}.
Define
\[
\mu_2 = \frac{\gamma_2}{\gamma_1}(\mu_1 - 1) + 1
\]
and
\[
v_2 = \frac{\gamma_2}{\gamma_1}(v_1 - 1) + 1.
\]
Then $\mu_2 \in M_{\gamma_2}$ and $v_2$ is the unique solution of
\eqref{eqn:core_harmonicv}--\eqref{eqn:core_boundary2v} associated to $\mu_2$, so that,
by Proposition~\ref{prop_min},
\[
E_\epsilon^{\gamma_2}(\mu_2) = \frac{\epsilon}{2} \int_{-1}^1 \frac{(\mu_2')^2}{1 - \mu_2^2} \, dx_1 + \frac{1}{2} \int_{B_1^+(0)} |\nabla v_2|^2 \, dx.
\]
We have
\[
1 + \mu_2 = \frac{\gamma_2}{\gamma_1}(1 + \mu_1) + \frac{2(\gamma_1 - \gamma_2)}{\gamma_1}
\]
and
\[
1 - \mu_2 = \frac{\gamma_2}{\gamma_1} (1 - \mu_1).
\]
Hence
\[
\frac{(\mu_2')^2}{1 - \mu_2^2} = \frac{\frac{(\mu_1')^2}{1 - \mu_1^2}}{1 + \frac{2(\gamma_1 - \gamma_2)}{\gamma_2(1 + \mu_1)}}.
\]
Under the assumptions of the proposition, we have $\beta < \gamma_1, \gamma_2 < 2 - \beta$ and
$\beta < 1 + \mu_1 \le 2$ throughout $(-1, 1)$. Therefore, we have a constant
$C_1 = C_1(\beta)$ such that
\[
\frac{1}{1 + \frac{2(\gamma_1 - \gamma_2)}{\gamma_2(1 + \mu_1)}} \le \frac{\gamma_2^2}{\gamma_1^2} + C_1 |\gamma_2 - \gamma_1|,
\]
provided that $|\gamma_2 - \gamma_1|$ is small enough.

We now have
\[
E_\epsilon^{\gamma_2}({\mu}_2) \le \left(\frac{\gamma_2^2}{\gamma_1^2} + C_1|\gamma_ 2 - \gamma_1|\right) \frac{\epsilon}{2} \int_{-1}^1 \frac{(\mu_1')^2}{1 - \mu_1^2} \, dx_1 + \frac{\gamma_2^2}{2\gamma_1^2} \int_{B_+^1(0)} |\nabla v_1|^2 \, dx.
\]
Combining this with \eqref{eqn:core_energy_estimate}
and \eqref{eqn:core_u_estimates},
we obtain the inequality
\[
g(\gamma_2)\leq g(\gamma_1) + \frac{\gamma_2^2-\gamma_1^2}{\gamma_1^2} g(\gamma_1) + \frac{C_2|\gamma_1-\gamma_2 |}{\left(\log \frac{1}{\delta}\right)^2}
\]
for a constant $C_2 = C_2(\beta)$ by Theorem \ref{thm:core_energy_estimate},
where $g$ is the function defined in \eqref{eqn:def_g}.
Finally, we know that
\[
g(\gamma_1) \le \frac{C_3}{\left(\log \frac{1}{\delta}\right)^2}
\]
for another constant $C_3$ depending only on $\beta$ (by Theorem \ref{thm:core_energy_estimate} again).
Hence we obtain the desired inequality.
\end{proof}

\begin{remark}
\label{rem:core_estimate}
Recall the discussion in Section \ref{sec:rescale_tail}. Consider a N\'eel wall at the point $a_n$ such that $m_1$
makes a transition from $1 - \gamma_n$ to $1$ and back. If we modify the wall, changing $\gamma_n$ to
$\gamma_n - \zeta_n$ with $\zeta_n = \frac{\lambda_n}{\log \frac 1 \delta}$, then the change of the energy (to leading order, rescaled by $(\log \delta)^2$) is
$$\left(\log \frac{1}{\delta}\right)^2 \left(\inf_{M_{\gamma_n-\zeta_n}} E_\epsilon^{\gamma_n-\zeta_n}-\inf_{M_{\gamma_n}} E_\epsilon^{\gamma_n}\right)= -\pi \gamma_n \lambda_n+o(1)
\quad \textrm{as}\quad \eps\searrow 0.$$
This is the phenomenon that leads to the core-tail interaction term in
the renormalised energy.
\end{remark}

\begin{lemma} \label{lemma:core_lower_estimate}
Let $C_0 > 0$, $\beta \in (0,1)$ and $q > 2$. Then there exists a constant $C>0$ (depending only on $C_0$, $\beta$ and $q$) such
that for any $\gamma \in (\beta, 2 - \beta)$, any $\eps\in (0, \frac 1 2]$, and
any $\eta \in (0, C_0)$, the following holds true.
Suppose that $\mu \in M_\gamma$ and $u \in W^{1, 2}(B_1^+(0))$ with
$\Delta u = 0$ in $B_1^+(0)$ and $\dd{u}{x_2} = - \mu'$ on $(-1, 1) \times \{0\}$.
Suppose further that
\[
\|x \cdot \nabla u\|_{L^q(\partial^+ B_1(0))} \le \frac{\eta}{\log \frac{1}{\delta}}
\]
and
\begin{equation} \label{eqn:difference_to_v_0}
\left\|\nabla u - \frac{\gamma x^\perp}{|x|^2 \log \frac{1}{\delta}}\right\|_{L^2(B_1^+(0) \backslash B_\delta(0))} + \|\nabla u\|_{L^2(B_\delta^+(0))} \le \frac{C_0}{\log \frac{1}{\delta}}.
\end{equation}
Then
\[
\frac{\epsilon}{2} \int_{-1}^1 \frac{(\mu')^2}{1 - \mu^2} \, dx_1 + \frac{1}{2} \int_{B_1^+(0)} |\nabla u|^2 \, dx \ge \inf_{M_\gamma} E_\epsilon^\gamma - \frac{C\eta}{\left(\log \frac{1}{\delta}\right)^2}.
\]
\end{lemma}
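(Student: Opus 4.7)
The plan is to produce a valid competitor for $E_\epsilon^\gamma(\mu)$ by perturbing $u$ on the outer semicircle so that it becomes the stream function of some admissible potential, and then to show that the energy cost of the perturbation is $O(\eta/(\log(1/\delta))^2)$. Since $\mu\in M_\gamma$, this will give $\inf_{M_\gamma} E_\epsilon^\gamma\le E_\epsilon^\gamma(\mu)\le \frac{\epsilon}{2}\int (\mu')^2/(1-\mu^2)\,dx_1+\frac{1}{2}\int |\nabla u|^2\,dx+C\eta/(\log(1/\delta))^2$. First I would compute $c:=(\mu(-1)-\mu(1))/\pi$; integrating $\Delta u=0$ against $1$ and using the boundary data gives $\int_{\partial^+ B_1(0)} x\cdot\nabla u\,d\sigma=\mu(-1)-\mu(1)=c\pi$, so the hypothesis on $\|x\cdot\nabla u\|_{L^q}$ forces $|c|\le C\eta/\log(1/\delta)$. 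Then I solve the Neumann problem $\Delta h=0$ in $B_1^+(0)$, $\partial h/\partial x_2=0$ on $(-1,1)\times\{0\}$, $x\cdot\nabla h=c-x\cdot\nabla u$ on $\partial^+ B_1(0)$; compatibility $\int_{\partial B_1^+}\partial_\nu h\,d\sigma=c\pi-c\pi=0$ holds by construction, and standard Neumann regularity gives $\|\nabla h\|_{L^2(B_1^+)}\le C\|c-x\cdot\nabla u\|_{L^q(\partial^+ B_1)}\le C\eta/\log(1/\delta)$.

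Setting $\tilde u:=u+h$, the function $\tilde u$ is harmonic with $\partial\tilde u/\partial x_2=-\mu'$ on the bottom and $x\cdot\nabla\tilde u\equiv c$ on $\partial^+ B_1(0)$. Using the Cauchy--Riemann construction of Section~4.1, I would build a stream function $\tilde v\in W^{1,2}(B_1^+(0))$ with $\nabla\tilde v=(-\partial_{x_2}\tilde u,\partial_{x_1}\tilde u)$; normalising so $\tilde v(1,0)=\mu(1)$, the bottom condition yields $\tilde v(x_1,0)=\mu(x_1)$ while the condition on the semicircle translates to $\tilde v(\cos\theta,\sin\theta)=\mu(1)+c\theta$, a linear interpolation between $\mu(1)$ and $\mu(-1)$. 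Both endpoints are $\le 1-\gamma$ since $\mu\in M_\gamma$, so $\tilde v\le 1-\gamma$ on $\partial^+ B_1(0)$ and therefore $\tilde v\in W_\mu^{1,2}(B_1^+(0))$ is an admissible competitor, giving $E_\epsilon^\gamma(\mu)\le \frac{\epsilon}{2}\int (\mu')^2/(1-\mu^2)\,dx_1+\frac{1}{2}\int|\nabla\tilde u|^2\,dx$.

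It remains to control $\int|\nabla\tilde u|^2-\int|\nabla u|^2=2\int\nabla u\cdot\nabla h+\int|\nabla h|^2$. The last term is $O(\eta^2/(\log(1/\delta))^2)\le C\eta/(\log(1/\delta))^2$. For the cross term I would decompose $\nabla u=\nabla u_0+\nabla u_1$ on $B_1^+(0)\setminus B_\delta(0)$, where $u_0(x)=(\gamma/\log(1/\delta))(\arctan(x_2/x_1)-\pi x_1/(2|x_1|))$ is the explicit model tail; by \eqref{eqn:difference_to_v_0}, Cauchy--Schwarz handles $\int_{B_\delta^+}\nabla u\cdot\nabla h$ and $\int_{B_1^+\setminus B_\delta}\nabla u_1\cdot\nabla h$, each being $\le C\eta/(\log(1/\delta))^2$. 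The delicate piece is $\int_{B_1^+\setminus B_\delta}\nabla u_0\cdot\nabla h$: since $u_0$ is harmonic in $B_1^+\setminus\{0\}$, its gradient is purely angular and $x\cdot\nabla u_0\equiv 0$ on every centred circle, integration by parts collapses to a boundary term on the bottom,
\[
\int_{B_1^+\setminus B_\delta}\nabla u_0\cdot\nabla h=-\frac{\gamma}{\log(1/\delta)}\int_{\delta\le|x_1|\le 1}\frac{h(x_1,0)}{x_1}\,dx_1.
\]
The Neumann condition $\partial_{x_2} h=0$ lets me even-reflect $h$ to a function harmonic on $B_1(0)$, so interior estimates yield $\|h(\cdot,0)\|_{C^3([-1/2,1/2])}\le C\|\nabla h\|_{L^2(B_1^+)}\le C\eta/\log(1/\delta)$; writing $h(x_1,0)-h(-x_1,0)=2x_1\,\partial_{x_1}h(0,0)+O(x_1^3)$ (the symmetric part integrates to zero) and controlling the far contribution by the $L^\infty$ bound on $h(\cdot,0)$ yields $|\int h(x_1,0)/x_1\,dx_1|\le C\eta/\log(1/\delta)$, and hence $|\int\nabla u_0\cdot\nabla h|\le C\eta/(\log(1/\delta))^2$. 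The main obstacle is exactly this last bound: a direct Cauchy--Schwarz loses a factor $\sqrt{\log(1/\delta)}$ because $\|\nabla u_0\|_{L^2(B_1^+\setminus B_\delta)}\sim 1/\sqrt{\log(1/\delta)}$, so one must exploit both the angular structure of $u_0$ and the Neumann regularity of $h$ at the bottom.
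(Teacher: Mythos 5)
Your argument is correct, but it takes a genuinely different route from the paper's. You repair the potential $u$ itself: you quantify the flux defect $c=(\mu(-1)-\mu(1))/\pi$, solve an auxiliary Neumann problem for $h$ so that $x\cdot\nabla(u+h)\equiv c$ on the arc, and then the stream function of $u+h$ is exactly affine in $\theta$ there, hence an admissible competitor; the price is the cross term $\int\nabla u\cdot\nabla h$, whose dangerous part $\int\nabla u_0\cdot\nabla h$ you control by integrating by parts onto the bottom (using that the model tail has vanishing radial derivative) and exploiting the near-odd cancellation of $h(x_1,0)/x_1$ together with Schwarz reflection and interior estimates for $h$. The paper instead keeps $u$, takes its conjugate stream function $\tilde w$ with trace $\mu$ on the bottom, and observes that integrating $x\cdot\nabla u$ along the arc gives $\tilde w(\cos\theta,\sin\theta)-\mu(\pm1)\le \theta^{1-1/q}\eta/\log\frac1\delta$; it then subtracts the explicit barrier $g(x)=2\eta x_2^{1-1/q}/\log\frac1\delta$, which vanishes on the bottom and dominates this excess on the arc (since $2\sin\theta\ge\theta$), so $\tilde w-g$ is admissible. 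Its cross term $\int\nabla u\cdot\nabla^\perp g$ is disposed of by one H\"older estimate with exponents $p\in(2,q)$ and $p/(p-1)<2$, using that \eqref{eqn:difference_to_v_0} bounds $\nabla u$ in $L^{p'}$ for $p'<2$. The paper's route is shorter and avoids both the auxiliary PDE and the cancellation argument; yours produces a competitor whose arc trace is exactly controlled (not merely dominated after subtracting a barrier) and isolates the flux defect $c$ explicitly, at the cost of the more delicate estimate for $\int\nabla u_0\cdot\nabla h$, where a naive Cauchy--Schwarz would indeed lose a factor $\sqrt{\log\frac1\delta}$ as you note. Minor points to tidy: fix the additive constant in $h$ (it drops out of $\int_{\delta\le|x_1|\le1}h(x_1,0)/x_1\,dx_1$ by symmetry), and near $|x_1|=1$ replace the claimed $L^\infty$ bound on $h(\cdot,0)$ by an $L^1$ or $L^2$ trace bound, since $h$ need not be bounded up to the corners.
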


\begin{proof}
Consider the function $\tilde{w} \in W^{1, 2}(B_1^+(0))$ with
$\nabla \tilde{w} = \nabla^\perp u$ and $\tilde{w}(x_1, 0) = \mu(x_1)$
for $x_1 \in (-1, 1)$. Let $S_\theta = \set{(\cos t, \sin t)}{t \in (0, \theta)}$
for $0 < \theta < {\frac\pi 2}$. Note that
\[
\tilde{w}(\cos \theta, \sin \theta) - \mu(1) = \int_{S_\theta} x \cdot \nabla u \, d\sigma \le \frac{\theta^{1 - 1/q} \eta}{\log \frac{1}{\delta}}, \quad \theta\in (0, {\frac\pi 2}).
\]
Similarly, we find that
\[
\tilde{w}(- \cos \theta, \sin \theta) - \mu(-1) \le \frac{\theta^{1 - 1/q} \eta}{\log \frac{1}{\delta}}, \quad \theta\in (0, {\frac\pi 2}).
\]
Thus if we define
\[
g(x) = \frac{2\eta x_2^{1 - 1/q}}{\log \frac{1}{\delta}}
\]
and
\[
w = \tilde{w} - g,
\]
then we have that $w\leq \max\{\mu(\pm 1)\}\leq 1-\gamma$ on $\partial B_1^+(0)$ (because $2\sin \theta\geq \theta$ for $\theta\in (0, \frac \pi 2)$). Moreover, $w \in W_\mu^{1, 2}(B_1^+(0))$. Indeed, we have
\[
\int_{B_1^+(0)} |\nabla w|^2 \, dx = \int_{B_1^+(0)} |\nabla u|^2 \, dx + 2\int_{B_1^+(0)} \nabla u \cdot \nabla^\perp g \, dx + \int_{B_1^+(0)} |\nabla g|^2 \, dx.
\]
For any $p \in [1, q)$, we have a constant $C_1 = C_1(p)$ such that
\[
\|\nabla g\|_{L^p(B_1^+(0))} \le \frac{C_1 \eta}{\log \frac{1}{\delta}}.
\]
For $p>2$, inequality \eqref{eqn:difference_to_v_0} gives another constant $C_2 = C_2(p, C_0)$ such that
\[
\|\nabla u\|_{L^{p/(p - 1)}(B_1^+(0))} \le \frac{C_2}{\log \frac{1}{\delta}}.
\]
We conclude, using H\"older's inequality, that
\[
\int_{B_1^+(0)} \nabla u \cdot \nabla^\perp g \, dx\leq \frac{C_1 C_2 \eta}{\left(\log \frac{1}{\delta}\right)^2},
\]
using some fixed $p\in (2, q)$. Therefore,
\[
\int_{B_1^+(0)} |\nabla w|^2 \, dx \le \int_{B_1^+(0)} |\nabla u|^2 \, dx + \frac{C_3 \eta}{\left(\log \frac{1}{\delta}\right)^2}
\]
for a number $C_3>0$ that depends only on $q$ and $C_0$. Now the statement of the lemma follows.
\end{proof}

\begin{corollary} \label{cor:core_estimate}
Let $\beta \in (0, \frac 2 3)$, $C_0> 0$, and $q > 2$. Then there exists a constant $C>0$ such
that for any $\gamma \in (2\beta, 2 - 2\beta)$, any $\eps\in (0, \frac 1 2]$, any $\eta \in (0, C_0)$, and any $\zeta \in (-C_0, C_0)$,
the following holds true.
Suppose that $\mu \in W^{1, 2}(-1, 1)$ with
$\mu(\pm 1) \le 1 - \gamma + \frac{\zeta}{\log \frac{1}{\delta}}$ and $\mu(0) = 1$.
Let $u \in W^{1, 2}(B_1^+(0))$ be a function with
$\Delta u = 0$ in $B_1^+(0)$ and $\dd{u}{x_2} = - \mu'$ on $(-1, 1) \times \{0\}$.
Suppose that
\[
\|x \cdot \nabla u\|_{L^q(\partial^+ B_1(0))} \le \frac{\eta}{\log \frac{1}{\delta}}
\]
and
\[
\left\|\nabla u - \frac{\gamma x^\perp}{|x|^2 \log \frac{1}{\delta}}\right\|_{L^2(B_1^+(0) \backslash B_\delta(0))} + \|\nabla u\|_{L^2(B_\delta^+(0))} \le \frac{C_0}{\log \frac{1}{\delta}}.
\]
Then
\[
\frac{\epsilon}{2} \int_{-1}^1 \frac{(\mu')^2}{1 - \mu^2} \, dx_1 + \frac{1}{2} \int_{B_1^+(0)} |\nabla u|^2 \, dx \ge \inf_{M_\gamma} E_\epsilon^\gamma - \frac{\pi \gamma \zeta+C\eta}{\left(\log \frac{1}{\delta}\right)^2} - \frac{C}{\left(\log \frac{1}{\delta}\right)^3}.
\]
\end{corollary}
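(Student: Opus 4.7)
\medskip

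\noindent\textbf{Proof proposal.} The plan is to reduce the corollary to Lemma \ref{lemma:core_lower_estimate}, for which the rigid boundary condition $\mu(\pm 1)\le 1-\gamma$ is needed, by a Lipschitz-type comparison along the one-parameter family $\{\inf E_\epsilon^{\gamma'}\}_{\gamma'}$ encoded in Proposition \ref{prop:change_gamma}. The natural ansatz is to shift the parameter: set
\[
\tilde\gamma = \gamma - \frac{\zeta}{\log \frac{1}{\delta}}.
\]
By the hypothesis $\mu(\pm 1) \le 1 - \gamma + \zeta/\log\frac{1}{\delta} = 1 - \tilde\gamma$ together with $\mu(0)=1$, so $\mu \in M_{\tilde\gamma}$. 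Moreover, provided $\delta$ is small enough (depending only on $\beta$ and $C_0$) we have $\tilde\gamma \in (\beta, 2-\beta)$, which lies in the allowed range for Lemma \ref{lemma:core_lower_estimate} and Proposition \ref{prop:change_gamma}; the remaining range $\delta$ not small is trivial since the left hand side is non-negative and the right hand side is bounded in absolute value.

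Next I would check that the structural assumption \eqref{eqn:difference_to_v_0} persists when $\gamma$ is replaced by $\tilde\gamma$. Using the triangle inequality and the explicit computation
\[
\left\|\tfrac{x^\perp}{|x|^2}\right\|_{L^2(B_1^+(0)\setminus B_\delta(0))}^2 = \pi \log \tfrac{1}{\delta},
\]
the extra term is
\[
\frac{|\gamma-\tilde\gamma|}{\log\frac{1}{\delta}}\left\|\frac{x^\perp}{|x|^2}\right\|_{L^2(B_1^+(0)\setminus B_\delta(0))} \le \frac{\sqrt{\pi}\,|\zeta|}{(\log\frac{1}{\delta})^{3/2}},
\]
which is $o(1/\log\frac{1}{\delta})$, so \eqref{eqn:difference_to_v_0} holds with a constant $\tilde C_0$ depending only on $C_0$. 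Applying Lemma \ref{lemma:core_lower_estimate} to the pair $(\mu,u)$ with parameter $\tilde\gamma$ yields
\[
\frac{\epsilon}{2}\int_{-1}^1 \frac{(\mu')^2}{1-\mu^2}\,dx_1 + \frac{1}{2}\int_{B_1^+(0)}|\nabla u|^2\,dx \ge \inf_{M_{\tilde\gamma}} E_\epsilon^{\tilde\gamma} - \frac{C\eta}{(\log\frac{1}{\delta})^2}.
\]

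Finally, Proposition \ref{prop:change_gamma} applied with $\gamma_1=\tilde\gamma$ and $\gamma_2=\gamma$ gives
\[
\inf_{M_\gamma} E_\epsilon^\gamma - \inf_{M_{\tilde\gamma}} E_\epsilon^{\tilde\gamma} \le \frac{\pi(\gamma^2-\tilde\gamma^2)}{2\log\frac{1}{\delta}} + \frac{C|\gamma-\tilde\gamma|}{(\log\frac{1}{\delta})^2}.
\]
Expanding
\[
\gamma^2-\tilde\gamma^2 = (\gamma+\tilde\gamma)(\gamma-\tilde\gamma) = \frac{2\gamma\zeta}{\log\frac{1}{\delta}} - \frac{\zeta^2}{(\log\frac{1}{\delta})^2},
\]
the first term of the right hand side becomes $\pi\gamma\zeta/(\log\frac{1}{\delta})^2$ plus a term of order $1/(\log\frac{1}{\delta})^3$; the second is of order $|\zeta|/(\log\frac{1}{\delta})^3$, which is controlled by $C_0$. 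Rearranging the inequality for $\inf E_\epsilon^{\tilde\gamma}$ and substituting into the previous bound produces exactly the stated estimate. The only subtle point, and the step that requires slight care, is the verification that the rescaled-vortex assumption \eqref{eqn:difference_to_v_0} survives the shift $\gamma\mapsto\tilde\gamma$; everything else is essentially an algebraic combination of the two inputs.
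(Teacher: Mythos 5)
Your proposal is correct and follows essentially the same route as the paper: replace $\gamma$ by $\gamma-\zeta/\log\frac1\delta$ so that $\mu$ lands in the admissible class, apply Lemma~\ref{lemma:core_lower_estimate}, and transfer back using the Lipschitz estimate of Proposition~\ref{prop:change_gamma}. The only difference is that you explicitly verify that hypothesis \eqref{eqn:difference_to_v_0} survives the shift (a $O((\log\frac1\delta)^{-3/2})$ perturbation), a point the paper leaves implicit; this check is correct and worth recording.
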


\begin{proof} Set $\tilde{\gamma}=\zeta/\log \frac{1}{\delta}$. 
We have $\mu \in M_{\gamma - \tilde{\gamma}}$, thus for $\eps$ small enough, we can first apply
Lemma \ref{lemma:core_lower_estimate}  for $\gamma - \tilde{\gamma}\in (\beta, 2-\beta)$ instead of $\gamma$. We obtain
\[
\frac{\epsilon}{2} \int_{-1}^1 \frac{(\mu')^2}{1 - \mu^2} \, dx_1 + \frac{1}{2} \int_{B_1^+(0)} |\nabla u|^2 \, dx \ge \inf_{M_{\gamma - \tilde{\gamma}}} E_\epsilon^{\gamma -\tilde{\gamma}} - \frac{C_1 \eta}{\left(\log \frac{1}{\delta}\right)^2}
\]
for a constant $C_1 = C_1(\beta, C_0, q)$.
From Proposition \ref{prop:change_gamma}, it follows that
\[
\inf_{M_{\gamma - \tilde{\gamma}}} E_\epsilon^{\gamma - \tilde{\gamma}} - \inf_{M_\gamma} E_\epsilon^{\gamma} \ge -\frac{\pi \gamma\zeta}{\left(\log \frac{1}{\delta}\right)^2} -
\frac{C_2}{\left(\log \frac{1}{\delta}\right)^3}
\]
for another constant $C_2 = C_2(\beta, C_0)$.
Now it suffices to combine these estimates.
\end{proof}

\subsection{The profile near the core}

Minimisers of $E_\epsilon^\gamma$ give a good approximation
of the behaviour of N\'eel walls near the core. When we let $\epsilon \searrow 0$,
then, after renormalisation, we have convergence of those minimisers to a
specific profile. More precisely, we have the following.

\begin{theorem} \label{thm:core_convergence}
For $\epsilon \in (0, \frac{1}{2}]$ and $\gamma\in (0, 2)$, let $\mu_\epsilon \in M_\gamma$
be the minimiser of $E_\epsilon^\gamma$. Let $u_\epsilon$ be the corresponding
solution of \eqref{eqn:core_harmonic}--\eqref{eqn:core_boundary2}
with
\be
\label{eqn:average}
\int_{B_1^+(0)\setminus B_{1/2}(0)} u_\epsilon \, dx = 0.
\ee
Suppose that $0<r< R < 1$. Then, as $\eps\searrow 0$,
\[
u_\epsilon(x) \log \frac{1}{\delta} \rightharpoonup \gamma \left(\arctan\left(\frac{x_2}{x_1}\right) - \frac{\pi x_1}{2|x_1|}\right) \quad \textrm{weakly in $W^{2, 2}(B_R^+(0) \backslash B_r(0))$}
\]
 and
\[
(\mu_\epsilon(x_1) - 1 + \gamma) \log \frac{1}{\delta} \to \gamma \log \frac{1}{|x_1|} \quad \textrm{strongly in $W^{1,2}((-R, -r) \cup (r, R))$}.
\]
\end{theorem}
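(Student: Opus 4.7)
The plan is to rescale by $\log\frac{1}{\delta}$, convert the $L^{2}$-estimates of Theorem~\ref{thm:core_energy_estimate} into uniform bounds on the rescaled potential $\tilde u_\eps := u_\eps\log\frac{1}{\delta}$, upgrade these via the interior $W^{2,2}$-estimate of Lemma~\ref{lemma:higher_estimates}, extract a weak limit, and identify it as $\gamma\xi$. Throughout, write $\xi(x) = \arctan(x_2/x_1) - \pi x_1/(2|x_1|)$ and $\tilde\mu_\eps := (\mu_\eps - 1 + \gamma)\log\frac{1}{\delta}$.

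Multiplying \eqref{eqn:core_u_estimates} by $(\log\frac{1}{\delta})^{2}$ yields $\int_{B_1^+\setminus B_\delta}|\nabla\tilde u_\eps - \gamma\nabla\xi|^{2}\,dx + \int_{B_\delta^+}|\nabla\tilde u_\eps|^{2}\,dx \le C$, while \eqref{eqn:core_boundary_estimate} rescales to $\int_{\partial^+B_1}|\nabla\tilde u_\eps|^{2}\,d\sigma \le C$. Since $\xi$ is odd in $x_1$ and has zero mean over $B_1^+\setminus B_{1/2}$, the normalisation \eqref{eqn:average} combined with the Poincar\'e inequality gives a uniform bound on $\tilde u_\eps$ in $W^{1,2}(B_1^+\setminus B_r)$ for each $r>0$. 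To upgrade to a $W^{2,2}$-bound, I apply Lemma~\ref{lemma:higher_estimates} on $B_R^+\setminus B_r$: its right-hand side is of order $(\log\frac{1}{\delta})^{-2}$, the exchange piece by \eqref{eqn:core_u_estimates} and the magnetic piece since $\int_{B_1^+\setminus B_r}\gamma^{2}/(|x|^{2}(\log\frac{1}{\delta})^{2})\,dx = O((\log\frac{1}{\delta})^{-2})$ on the fixed annulus; multiplying the conclusion by $(\log\frac{1}{\delta})^{2}$ gives $W^{2,2}$-boundedness of $\tilde u_\eps$ on any $B_{R'}^+\setminus B_{r'}$. Lemma~\ref{lemma:higher_estimates} requires $\sin\varphi_\eps \neq 0$ on the relevant axis interval, which I verify by showing that the harmonic conjugate $v_\eps$ (with $\nabla^\perp v_\eps = -\nabla u_\eps$, $v_\eps = 1-\gamma$ on $\partial^+B_1$, and $v_\eps = \mu_\eps$ on the axis) tends uniformly to $1-\gamma$ on compact subsets of $\overline{B_1^+}\setminus\{0\}$; this combines $\int|\nabla v_\eps|^{2}\le C/\log\frac{1}{\delta}$ with boundary Harnack-type regularity for the nonnegative harmonic function $v_\eps - (1-\gamma)$ vanishing on the arc.

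By weak compactness and a diagonal argument, extract a subsequence $\tilde u_{\eps_k}\rightharpoonup u_*$ weakly in $W^{2,2}(B_R^+\setminus B_r)$ for every $0<r<R<1$. The limit $u_*$ is harmonic in $B_1^+\setminus\{0\}$, satisfies $x\cdot\nabla u_* = 0$ on $\partial^+B_1$, and by lower semicontinuity $\int_{B_1^+}|\nabla(u_* - \gamma\xi)|^{2}<\infty$. Since points have vanishing $W^{1,2}$-capacity in $\R^{2}$, the function $w := u_* - \gamma\xi$ extends to a harmonic function on $B_1^+$ with $x\cdot\nabla w = 0$ on $\partial^+B_1$. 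The main obstacle is showing $w\equiv 0$; I address it by working in parallel with the conjugate stream function $\tilde v_\eps := (v_\eps - 1 + \gamma)\log\frac{1}{\delta}$, which satisfies $\nabla\tilde v_\eps = \nabla^\perp\tilde u_\eps$, vanishes on $\partial^+B_1$, and equals $\tilde\mu_\eps$ on the axis. The identity $\nabla\log\frac{1}{|x|} = \nabla^\perp\xi$ transfers the gradient estimate to $\tilde v_\eps - \gamma\log\frac{1}{|x|}$, and the weak $W^{2,2}$-limit $v_*$ satisfies $v_* - \gamma\log\frac{1}{|x|}=0$ on $\partial^+B_1$. Passing the Cauchy--Riemann relation to the limit, $\tilde w := v_* - \gamma\log\frac{1}{|x|}$ is the harmonic conjugate of $w$ on $B_1^+$ with vanishing Dirichlet datum on the arc. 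A uniqueness argument for the resulting mixed boundary-value problem, combined with \eqref{eqn:average} and the bounded-at-origin control coming from the $W^{2,2}\hookrightarrow C^{0}$ embedding, forces $w\equiv 0$, whence $u_* = \gamma\xi$; by uniqueness of the limit the full sequence converges.

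Finally, the weak $W^{2,2}$-convergence of $\tilde u_\eps$ (and of $\tilde v_\eps$) combined with Rellich compactness and the continuity of the trace map $W^{1,2}\to L^{2}$ gives strong $L^{2}$-convergence of $\tilde\mu_\eps' = -\partial\tilde u_\eps/\partial x_2 \to -\gamma/x_1$ and of $\tilde\mu_\eps = \tilde v_\eps|_{\text{axis}} \to \gamma\log(1/|x_1|)$ on $(-R,-r)\cup(r,R)$, yielding the claimed strong $W^{1,2}$-convergence.
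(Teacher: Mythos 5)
Your overall architecture (rescale, use Theorem~\ref{thm:core_energy_estimate} for $W^{1,2}$ bounds, upgrade to $W^{2,2}$ via Lemma~\ref{lemma:higher_estimates}, extract a weak limit, identify it) matches the paper's, but the identification step has a genuine gap. The mixed boundary-value problem you set up for $w-\gamma\xi$ (equivalently for its conjugate $\tilde w=v_*-\gamma\log\frac{1}{|x|}$) carries boundary information only on the arc $\partial^+B_1(0)$: zero Neumann data for $w-\gamma\xi$, zero Dirichlet data for $\tilde w$. You impose \emph{nothing} on the diameter $(-1,1)\times\{0\}$, and without that the problem is massively underdetermined --- one can solve the Dirichlet problem on $B_1^+(0)$ with datum $0$ on the arc and an arbitrary finite-energy datum on the diameter, producing infinitely many admissible $\tilde w$; normalisation \eqref{eqn:average} and boundedness at the origin do not rescue uniqueness. (Knowing the trace of $\tilde v_\eps$ on the axis equals $\tilde\mu_\eps$ is of no help, since the limit of $\tilde\mu_\eps$ is precisely what the theorem asserts --- using it here would be circular.) The missing ingredient is the Euler--Lagrange equation \eqref{eqn:core_Euler-Lagrange}: combined with the bound $\eps\int(\varphi_\eps'')^2\,dx_1\le C/(\log\frac1\delta)^2$ from Lemma~\ref{lemma:higher_estimates} and the uniform lower bound on $|\sin\varphi_\eps|$, it gives $\int_r^R(u_\eps')^2\,dx_1\le C\eps/(\log\frac1\delta)^2$, so the tangential derivative of $u_\eps\log\frac1\delta$ on the flat boundary vanishes in the limit. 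This forces the trace of $w$ to be locally constant on $(-1,0)$ and $(0,1)$; then $w-\gamma\xi\in W^{1,2}(B_1^+(0))$ has an $H^{1/2}$ trace which admits no jump at $0$, hence is constant on all of $(-1,1)$, and only now does the mixed problem (constant Dirichlet on the diameter, zero Neumann on the arc) pin down $w-\gamma\xi$ as a constant, killed by \eqref{eqn:average}. Your proof must supply this flat-boundary condition or an equivalent.

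A second, smaller gap is your verification that $\sin\varphi_\eps\neq0$ away from the origin, needed to invoke Lemma~\ref{lemma:higher_estimates} at all. You propose to control $\mu_\eps=v_\eps(\blank,0)$ by combining the energy decay $\int|\nabla v_\eps|^2\le C/\log\frac1\delta$ with ``boundary Harnack'' for the nonnegative harmonic function $v_\eps-(1-\gamma)$ vanishing on the arc. But the quantity you need to control is the trace on the \emph{flat} boundary, where $v_\eps-(1-\gamma)$ does not vanish and where the maximum principle runs the wrong way: a nonnegative harmonic function can be uniformly small in the interior and still have large boundary values on a portion of the boundary (Poisson-kernel-type concentration). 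Energy decay gives at best $L^2$ control of the trace, not the pointwise bound $|\mu_\eps-1+\gamma|<\beta$ required to keep $\cos\varphi_\eps$ away from $\pm1$. The paper's Lemma~\ref{lemma:m1_estimate} is designed exactly for this: a logarithmic cut-off/capacity argument yielding $|\mu_\eps(x_1)-\mu_\eps(-1)|\le C(\log\frac1\delta)^{-1/2}$ pointwise; you should use it (or reproduce an argument of that type) rather than Harnack. The final passage from weak $W^{2,2}$ convergence to strong $W^{1,2}$ convergence of the traces is fine, and your use of the conjugate's vanishing on the arc to fix the additive constants $\lambda_\pm$ is a legitimate alternative to the paper's test-function computation --- but only once the limit has actually been identified.
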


\begin{remark}
For a related problem (concerning N\'eel walls in the
presence of an anisotropy but without the confinement to an interval),
Melcher \cite{Me1, Me2} proved similar results on the
profile of N\'eel walls (with methods different from ours).
\end{remark}

The proof of Theorem \ref{thm:core_convergence} relies on Lemma \ref{lemma:higher_estimates}. Therefore, we first need
to show that the assumption $\sin \varphi_\epsilon \neq 0$ in that lemma is satisfied
for a function $\varphi_\epsilon$ with $\mu_\epsilon = \cos \varphi_\epsilon$,
at least away from $x_1=0$, for sufficiently small values of
$\epsilon$. Because we will use this result in a slightly
different context as well, we formulate it more generally.

\begin{lemma} \label{lemma:m1_estimate}
There exists a constant $C>0$ such that the following holds true.
Suppose that $\Omega \subset \R_+^2$ is a
Lipschitz domain with $(-1, 1) \times \{0\} \subset \partial \Omega$
and let $\nu$ be the outer normal vector on $\partial \Omega$.
Let $\epsilon \in (0, \frac{1}{2}]$. Suppose that $\mu \in W^{1, 2}(-1, 1)$
and let $u \in W^{1, 2}(\Omega)$ be a solution of
\begin{alignat*}{2}
\Delta u & = 0 & \quad & \text{in $\Omega$}, \\
\dd{u}{x_2} & = - \mu' && \text{on $(-1, 1) \times \{0\}$}, \\
\nu \cdot \nabla u & = 0 && \text{on $\partial \Omega \backslash ((-1, 1) \times \{0\})$}.
\end{alignat*}
Let $x_1 \in (\delta - 1, 1 - \delta)$ and set
\[
\Sigma = \left(B_\delta^+(x_1,0)\cap \Omega\right) \cup \set{y \in \Omega \cap B_2^+(0)}{y_2 \ge |y_1 - x_1| \text{ or $|y|\in (\textstyle \frac 32,2)$}}.
\]
Then
\[
|\mu(x_1) - \mu(-1)| \le C \left(\log \frac{1}{\delta} \int_\Sigma |\nabla u|^2 \, dx\right)^{1/2} + 
C \left(\log \frac 1 \delta \int_{x_1 - \delta}^{x_1 + \delta} \eps (\mu')^2 \, dt\right)^{1/2}.
\]
\end{lemma}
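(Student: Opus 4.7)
The plan is to rewrite the scalar quantity $\mu(x_1)-\mu(-1)$ as a bulk integral $\int_\Omega \nabla\chi\cdot\nabla u\,dx$ against a carefully designed Lipschitz cutoff $\chi$, and then to estimate both sides by Cauchy--Schwarz. The factor $\log(1/\delta)$ will come from the logarithmic divergence of the angular Dirichlet energy of $\chi$ on the half annulus $\{\delta<|y-(x_1,0)|<3/2,\,y_2>0\}$, a logarithmically-failing interpolation estimate of the type alluded to in the introduction.

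First I would construct a Lipschitz function $\chi:\Omega\to[0,1]$ with three properties: (i) the trace $\chi(t,0)$ equals $1$ for $t\in(-1,x_1-\delta)$ and $0$ for $t\in(x_1-\delta/2,1)$; (ii) $\operatorname{supp}(\nabla\chi)\subset\Sigma$; (iii) $\|\nabla\chi\|_{L^2(\Omega)}^2\le C\log(1/\delta)$. In polar coordinates $(r,\theta)$ centred at $(x_1,0)$ with $\theta\in(0,\pi)$, I set $\chi(y)=\tilde f(\theta)\,\eta(r)\,\psi(|y|)$, where $\tilde f:[0,\pi]\to[0,1]$ is smooth with $\tilde f\equiv 0$ on $[0,\pi/4]$ and $\tilde f\equiv 1$ on $[3\pi/4,\pi]$, $\eta:[0,\infty)\to[0,1]$ is a radial cutoff with $\eta\equiv 0$ on $(0,\delta/2)$ and $\eta\equiv 1$ on $(\delta,\infty)$, and $\psi:[0,\infty)\to[0,1]$ is a cutoff with $\psi\equiv 1$ on $[0,3/2]$ and $\psi\equiv 0$ on $[2,\infty)$. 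The angular contribution dominates: $\int_{\delta<r<3/2}(\tilde f'(\theta))^2 r^{-2}\,dA\asymp \int_{\delta}^{3/2}\frac{dr}{r}\asymp\log(1/\delta)$, while the radial-cutoff annulus $\{\delta/2<r<\delta\}$ and the shell $\{3/2<|y|<2\}$ each contribute $O(1)$; these three pieces are precisely the parts of $\Sigma$ in which $\nabla\chi$ is supported.

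With $\chi$ in hand, Green's identity combined with $\Delta u=0$, $\partial_{x_2}u=-\mu'$ on $(-1,1)\times\{0\}$, and $\nu\cdot\nabla u=0$ on the remainder of $\partial\Omega$ gives
\[
\int_\Omega \nabla\chi\cdot\nabla u\,dx=\int_{-1}^1\chi(t,0)\,\mu'(t)\,dt=\mu(x_1-\delta)-\mu(-1)+\int_{x_1-\delta}^{x_1-\delta/2}\eta(x_1-t)\mu'(t)\,dt.
\]
Adding the telescoping identity $\mu(x_1)-\mu(x_1-\delta)=\int_{x_1-\delta}^{x_1}\mu'\,dt$ and rearranging, I obtain
\[
|\mu(x_1)-\mu(-1)|\le\Bigl|\int_\Omega\nabla\chi\cdot\nabla u\,dx\Bigr|+2\int_{x_1-\delta}^{x_1+\delta}|\mu'|\,dt.
\]
The first term is bounded by $C\bigl(\log(1/\delta)\int_\Sigma|\nabla u|^2\bigr)^{1/2}$ via (ii)--(iii) and Cauchy--Schwarz. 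For the second, Cauchy--Schwarz on an interval of length $2\delta$ yields $\int|\mu'|\,dt\le\sqrt{2\delta/\epsilon}\bigl(\epsilon\int(\mu')^2\,dt\bigr)^{1/2}$, and the elementary identity $\delta/\epsilon=\log(1/\epsilon)\le 2\log(1/\delta)$ (valid for $\epsilon\in(0,\tfrac12]$) converts this into the second term of the claimed inequality.

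The main obstacle is the simultaneous enforcement of (i)--(iii) in the construction of $\chi$: the geometry of $\Sigma$ (a small half-ball around $(x_1,0)$, an upper wedge, and a shell at $|y|\asymp 2$) is exactly what is required to accommodate an angular profile contributing $\log(1/\delta)$, a radial cutoff contributing $O(1)$, and a far-field cutoff making $\chi$ compactly supported. Any attempt to transition $\chi$ at intermediate radii outside $\Sigma$ would spoil property (ii), while avoiding the angular transition in the wedge would contradict property (i); the $\log(1/\delta)$ divergence is therefore optimal for this cutoff scheme.
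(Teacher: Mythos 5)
Your proposal is correct and follows essentially the same route as the paper: a Lipschitz test function whose gradient is supported in $\Sigma$ (with angular transition in the wedge, radial transition inside $B_\delta^+(x_1,0)$, and far-field cutoff in the shell $\tfrac32<|y|<2$), Dirichlet energy of order $\log\tfrac1\delta$, the weak form of the boundary value problem to convert $\int\chi\,\mu'\,dt$ into $\int_\Sigma\nabla\chi\cdot\nabla u\,dx$, and Cauchy--Schwarz plus $\delta/\epsilon=\log\tfrac1\epsilon\le 2\log\tfrac1\delta$ for the leftover piece near $x_1$. The only difference is cosmetic: the paper interpolates the two one-sided trace profiles via $(1-\eta(\theta))\psi(x_1+r)+\eta(\theta)\psi(x_1-r)$ rather than using your product ansatz, but the resulting estimates are identical.
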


\begin{proof}
Define
\[
\psi(t) = \begin{cases}
1 & \text{if $t \le x_1 - \delta$}, \\
\frac{x_1 - t}{2\delta} + \frac{1}{2} & \text{if $x_1 - \delta < t < x_1 + \delta$}, \\
0 & \text{if $t \ge x_1 + \delta$}.
\end{cases}
\]
Furthermore, let
\[
\eta(\theta) = \begin{cases}
0 & \text{if $\theta \le \frac{\pi}{4}$}, \\
\frac{2\theta}{\pi} - \frac{1}{2} & \text{if $\frac{\pi}{4} < \theta < \frac{3\pi}{4}$}, \\
1 & \text{if $\theta \ge \frac{3\pi}{4}$},
\end{cases}
\]
and define $\tilde{\chi} : \R_+^2 \to \R$ by
\[
\tilde{\chi}(x_1 + r \cos \theta, r \sin \theta) = (1 - \eta(\theta)) \psi(x_1 + r) + \eta(\theta) \psi(x_1 - r).
\]
Finally, let
\[
\chi(x) = \begin{cases}
\tilde{\chi}(x) & \text{if $|x| \le \frac 32$}, \\
2(2 - |x|) \tilde{\chi}(x) & \text{if $\frac 32 < |x| < 2$}, \\
0 & \text{if $|x| \ge 2$}.
\end{cases}
\]
Then $\Omega \cap \supp \nabla \chi \subset \overline{\Sigma}$ and
\[
\int_{\R_+^2} |\nabla \chi|^2 \, dx \le C_1 \log \frac{1}{\delta}
\]
for some universal constant $C_1$.

We have
\[
\mu(x_1) - \mu(-1) = \int_{-1}^{x_1} \mu'(t) \, dt.
\]
Moreover,
\[
\begin{split}
\left|\int_{-1}^{x_1} \mu'(t) \, dt - \int_{-1}^1 \chi(t, 0) \mu'(t) \, dt\right| & \le \int_{x_1 - \delta}^{x_1 + \delta} |\mu'| \, dt \\
& \le \left(2\delta \int_{x_1 - \delta}^{x_1 + \delta} (\mu')^2 \, dx\right)^{1/2}.
\end{split}
\]
Using the boundary value problem for $u$, we find that
\[
\left|\int_{-1}^1 \chi(t, 0) \mu'(t) \, dt\right| = \left|\int_{\Omega} \nabla \chi \cdot \nabla u \, dx\right| \le \left(C_1 \log \frac{1}{\delta} \int_{\Sigma} |\nabla u|^2 \, dx\right)^{1/2}.
\]
Combining these estimates, we obtain the
desired inequality.
\end{proof}

\begin{proof}[Proof of Theorem \ref{thm:core_convergence}]
We choose $\varphi_\epsilon : (-1, 1) \to [0, \pi)$ such
that $\varphi_\epsilon(0) = 0$ and $\mu_\epsilon = \cos \varphi_\epsilon$.
Using Theorem \ref{thm:core_energy_estimate} and Lemma \ref{lemma:m1_estimate} (applied for $\Omega=B_1^+(0)$),
we see that for any $r, R \in (0, 1)$ and any sufficiently small $\beta > 0$, we have
$|\mu_\epsilon -1 + \gamma| < \beta$ and $|\sin \varphi_\epsilon|\geq \beta$ in $(-R, -r) \cup (r, R)$ if $\epsilon > 0$ is small enough.
Therefore, we can apply Lemma \ref{lemma:higher_estimates}
in $(-R, -r)$ and in $(r, R)$.

By Theorem \ref{thm:core_energy_estimate}, the Poincar\'e inequality, and \eqref{eqn:average}, the functions
$u_\epsilon \log \frac{1}{\delta}$ are uniformly bounded in the space
$W^{1, 2}(B_1^+(0) \backslash B_r(0))$ for all $r > 0$.
Hence, by Lemma \ref{lemma:higher_estimates}, they are
uniformly bounded in $W^{2, 2}(B_R^+(0) \backslash B_r(0))$
whenever $0 < r < R < 1$. Hence there exists a sequence
$\epsilon_k \to 0$ as $k\to \infty$ such that we have weak convergence
\[
u_{\epsilon_k} \log \frac{1}{\delta_k} \rightharpoonup  w
\]
in $W^{2, 2}(B_R^+(0) \backslash B_r(0))$ for all $r, R \in (0, 1)$,
where $\delta_k = \epsilon_k \log \frac{1}{\epsilon_k}$.
The limit
\[
w \in \bigcap_{0<r<R< 1} W^{2, 2}(B_R^+(0) \backslash B_r(0))
\]
is harmonic in $B_1^+(0)$ and satisfies the boundary
condition $x \cdot \nabla w = 0$ on $\partial^+ B_1(0)$.
According to Lemma \ref{lemma:higher_estimates},
we also have
\[
\limsup_{k \to \infty} \left(\left(\log \frac{1}{\delta_k}\right)^2 \epsilon_k \int_r^R (\varphi_{\epsilon_k}'')^2 \, dx_1\right) < \infty.
\]
It follows from \eqref{eqn:core_Euler-Lagrange} that
\[
\limsup_{k \to \infty} \left(\frac{\left(\log \frac{1}{\delta_k}\right)^2}{\epsilon_k} \int_r^R (u_{\epsilon_k}')^2 \, dx_1\right) < \infty.
\]
Thus $w$ is constant on $(0, 1) \times \{0\}$, and we can
prove the same on $(-1, 0) \times \{0\}$.

Define
\[
\tilde{w}(x) = w(x) - \gamma \left(\arctan\left(\frac{x_2}{x_1}\right) - \frac{\pi x_1}{2|x_1|}\right), \quad x \in B_1^+(0).
\]
Since
\[
\int_{B_1^+(0)} \left|\nabla w - \frac{\gamma x^\perp}{|x|^2}\right|^2 \, dx < \infty
\]
by Theorem \ref{thm:core_energy_estimate}, it follows that
$\tilde{w} \in W^{1, 2}(B_1^+(0))$. We conclude that $\tilde{w}(\blank, 0) \in
H^{1/2}(-1, 1)$. Moreover, the trace $\tilde{w}(\blank, 0)$ is
constant on $(-1, 0)$ and on $(0, 1)$.
But $H^{1/2}(-1, 1)$ does not allow any jumps;
hence $\tilde{w}(\blank, 0)$ is in fact constant on $(-1, 1)$.
We also have
$\Delta \tilde{w} = 0$ in $B_1^+(0)$ and $x \cdot \nabla \tilde{w} = 0$
on $\partial^+ B_1(0)$. Thus it follows that $\tilde{w}$ is constant
in $B_1^+(0)$. Because of \eqref{eqn:average}, we have $\tilde{w} = 0$.
That is,
\[
w(x) = \gamma \left(\arctan\left(\frac{x_2}{x_1}\right) - \frac{\pi x_1}{2|x_1|}\right) \quad \textrm{in} \quad B_1^+(0).
\]
As the limit is thus independent of the sequence $\epsilon_k$,
this implies the first claim.

We have
\[
\log \frac{1}{\delta} \, \dd{u_\epsilon}{x_2} (x_1, 0) \to \frac{\gamma}{x_1}
\]
strongly in $L^2(-R, -r)$ and in $L^2(r, R)$.
But since $\mu_\epsilon' = - \dd{u_\epsilon}{x_2}$, it follows
that $(\mu_\epsilon - 1 + \gamma) \log \frac{1}{\delta}$
converges strongly in $W^{1,2}(-R, -r)$ to 
$\lambda_- - \gamma \log |x_1|$
and in $W^{1,2}(r, R)$
and to $\lambda_+ -\gamma \log |x_1|$ for two constants
$\lambda_-, \lambda_+ \in [-\infty, \infty]$. It remains to determine these constants.

Choose a function $\chi \in C_0^\infty(B_1(-1, 0))$ with $(-1, 0) \not\in \supp \nabla \chi$.
Then
\[
\lim_{\epsilon \searrow 0} \left(\log \frac{1}{\delta} \int_{B_1^+(0)} \nabla \chi \cdot \nabla u_\epsilon \, dx\right) = \gamma \int_{B_1^+(0)} \nabla \chi \cdot \frac{x^\perp}{|x|^2} \, dx = - \gamma \int_{-1}^1 \frac{\chi(x_1)}{x_1} \, dx_1.
\]
On the other hand,
\[
\int_{B_1^+(0)} \nabla \chi \cdot \nabla u_\epsilon \, dx = \int_{-1}^1 \mu_\epsilon' \chi \, dx_1 = - \int_{-1}^1 (\mu_\epsilon - 1 + \gamma) \chi' \, dx_1
\]
by an integration by parts. Hence
\[
\begin{split}
\lim_{\epsilon \searrow 0} \left(\log \frac{1}{\delta} \int_{B_1^+(0)} \nabla \chi \cdot \nabla u_\epsilon \, dx\right) & = \int_{-1}^1 (\gamma\log |x_1| - \lambda_-) \chi' \, dx_1 \\
& = \chi(-1, 0)\lambda_- - \gamma \int_{-1}^1 \frac{\chi(x_1)}{x_1} \, dx_1,
\end{split}
\]
and we obtain $\lambda_- = 0$. Similarly
we show that $\lambda_+ = 0$.
\end{proof}

\subsection{The core energy}

We can now determine the values of the function $e$ in
Theorem \ref{thm1}, albeit not explicitly. They arise as the limits in
the following result for $\gamma_\pm = 1 \mp \cos \alpha$.

\begin{theorem} \label{thm:core_energy_limit}
For any $\gamma \in (0, 2)$, the limit
\[
e_\gamma = \lim_{\epsilon \searrow 0} \left(\left(\log \frac{1}{\delta}\right)^2 \inf_{M_\gamma} E_\epsilon^\gamma - \frac{\pi \gamma^2}{2} \log \frac{1}{\delta}\right)
\]
exists.
\end{theorem}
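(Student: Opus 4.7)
By Theorem \ref{thm:core_energy_estimate}, the quantity
\[
f_\epsilon:=\left(\log\tfrac{1}{\delta}\right)^2\inf_{M_\gamma} E_\epsilon^\gamma-\tfrac{\pi\gamma^2}{2}\log\tfrac{1}{\delta}
\]
is uniformly bounded on $(0,\tfrac12]$. By Bolzano--Weierstrass, any sequence $\epsilon_k\searrow 0$ therefore admits a subsequence along which $f_{\epsilon_k}$ converges, and the problem reduces to showing that the limit value does not depend on the subsequence. The plan is a blow-up at the core scale $\delta$, identifying any such limit $L$ as the value of a uniquely solvable variational problem on $\R_+^2$.

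Let $\epsilon_k\to 0$ with $f_{\epsilon_k}\to L$, let $\mu_k$ be the minimizer of $E_{\epsilon_k}^\gamma$ (Proposition \ref{prop_min}), and let $u_k$ be the associated stray field. Define
\[
\hat u_k(y):=u_k(\delta_k y)\log\tfrac{1}{\delta_k}, \quad \hat w_k(y_1):=(\mu_k(\delta_k y_1)-1)\log\tfrac{1}{\delta_k}
\]
on $B_{1/\delta_k}^+(0)$ and $(-1/\delta_k,1/\delta_k)$. Conformal invariance of the Dirichlet energy in dimension two turns the estimates \eqref{eqn:core_u_estimates}--\eqref{eqn:core_boundary_estimate} into uniform bounds on $\hat u_k$ in $W^{1,2}(B_R^+(0))$ for every $R>0$, together with control of $\hat w_k$ on bounded intervals. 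Theorem \ref{thm:core_convergence} supplies the asymptotic matching $\hat u_k\to \xi$ and $\hat w_k(y_1)+\gamma\log|y_1|\to 0$ as $|y|\to\infty$, with $\xi(y)=\gamma(\arctan(y_2/y_1)-\pi y_1/(2|y_1|))$. Passing to a further subsequence yields weak limits $(\hat u_k,\hat w_k)\rightharpoonup (\hat u^*,\hat w^*)$ with $\hat u^*$ harmonic in $\R_+^2$ and $\partial_{y_2}\hat u^*=-(\hat w^*)'$ on $\R\times\{0\}$.

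A careful unscaling of $(\log(1/\delta_k))^2 E_{\epsilon_k}^\gamma(\mu_k)$---expanding the exchange integrand $(\mu_k')^2/(1-\mu_k^2)$ around $\mu_k=1$ (where $1-\mu_k^2\sim -2\hat w_k/\log(1/\delta_k)$) and splitting the stray-field integral between the rescaled core $B_1^+(0)$ and the outer annulus (where the divergence $\pi\gamma^2\log(1/\delta_k)$ of $\int|\nabla\xi|^2$ exactly cancels the subtracted term in $f_{\epsilon_k}$)---identifies
\[
L \;=\; \mathcal E(\hat w^*,\hat u^*), \quad \mathcal E(\hat w,\hat u):=\tfrac14\int_\R\tfrac{(\hat w')^2}{-\hat w}\,dy_1+\tfrac12\liminf_{R\to\infty}\Bigl(\int_{B_R^+(0)}|\nabla\hat u|^2\,dy-\pi\gamma^2\log R\Bigr),
\]
over pairs $(\hat w,\hat u)$ with $\hat u$ harmonic on $\R_+^2$, $\partial_{y_2}\hat u=-\hat w'$ on the boundary, and the matching at infinity. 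The liminf-inequality $L\geq \mathcal E(\hat w^*,\hat u^*)$ comes from weak lower semicontinuity; the matching upper bound $L\leq\inf \mathcal E$ is obtained by constructing, for any admissible competitor $(\hat w,\hat u)$ on $\R_+^2$, a sequence of finite-scale competitors via logarithmic interpolation on an intermediate annulus, following Proposition \ref{prop:core_construction}. Hence $L=\inf\mathcal E$.

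The main remaining obstacle is the uniqueness of $\inf\mathcal E$. For this, the substitution $g:=\sqrt{-\hat w}$ converts the exchange term $(\hat w')^2/(-\hat w)$ into $4(g')^2$, a standard Dirichlet energy, strictly convex in $g$ under the prescribed asymptotics $g(y_1)\sim\sqrt{\gamma\log|y_1|}$ at infinity; this forces $\hat w^*$ to be uniquely determined. Uniqueness of $\hat u^*$ then follows from the Neumann boundary data $\partial_{y_2}\hat u^*=-(\hat w^*)'$ combined with the asymptotic matching $\hat u^*-\xi\to 0$ at infinity, by arguments analogous to the uniqueness proofs in Proposition \ref{pro:uniq} and Proposition \ref{prop:minimiser}. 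Thus $L=\inf\mathcal E$ is independent of the subsequence, and $e_\gamma=\lim_{\epsilon\searrow 0} f_\epsilon$ exists, as claimed.
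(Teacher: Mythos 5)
Your proposal takes a genuinely different route from the paper, and it has a gap that I do not see how to close with the estimates available. The paper never identifies the limit as the value of a blow-up variational problem. Instead it proves an almost-monotonicity statement: rescaling the minimiser $(\mu,v)$ of $E_\epsilon^\gamma$ by a factor $R$ and interpolating logarithmically in the annulus produces a competitor for $E_{\tilde\epsilon}^\gamma$ with $\tilde\epsilon=\tilde\epsilon(R)<\epsilon$, with equality at $R=1$; differentiating at $R=1$ gives the one-sided bound $f_-'(\epsilon)\ge -C\log\log\frac1\epsilon/\bigl(\epsilon(\log\frac1\epsilon)^2\bigr)$, whose right-hand side is integrable near $0$, and this forces $\liminf f=\limsup f$. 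This sidesteps entirely the question of what the limit \emph{is}.

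The gap in your argument is the ``careful unscaling'' step identifying $L=\mathcal E(\hat w^*,\hat u^*)$. You split the stray-field energy at the core scale $\delta_k$ and assert that on the outer annulus $B_1^+\setminus B_{\delta_k}$ the energy equals $\pi\gamma^2\log\frac{1}{\delta_k}\cdot(\log\frac{1}{\delta_k})^{-2}$ up to $o\bigl((\log\frac{1}{\delta_k})^{-2}\bigr)$. The only uniform control there is \eqref{eqn:core_u_estimates}, i.e.\ $\|\nabla u-\nabla\xi\|_{L^2(B_1^+\setminus B_\delta)}\le C/\log\frac1\delta$ with $\nabla\xi=\gamma x^\perp/(|x|^2\log\frac1\delta)$, and since $\|\nabla\xi\|_{L^2(B_1^+\setminus B_\delta)}\sim(\log\frac1\delta)^{-1/2}$, the cross term in $\int|\nabla u|^2=\int|\nabla\xi|^2+2\int\nabla\xi\cdot(\nabla u-\nabla\xi)+\int|\nabla u-\nabla\xi|^2$ is only $O\bigl((\log\frac1\delta)^{-3/2}\bigr)$, which blows up like $(\log\frac1\delta)^{1/2}$ after multiplication by $(\log\frac1\delta)^2$. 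Theorem \ref{thm:core_convergence} pins down the energy to the required precision only on \emph{fixed} annuli $B_R^+\setminus B_r$ with $0<r<R<1$, and your blow-up controls only $B_{R\delta_k}^+$ with $R$ fixed; the energy in the intermediate ``neck'' $B_r^+\setminus B_{R\delta_k}$ carries an $O\bigl((\log\frac1\delta)^{-2}\bigr)$ correction to the leading $\pi\gamma^2\log\frac{r}{R\delta}\cdot(\log\frac1\delta)^{-2}$ that could a priori oscillate with $\epsilon$, and nothing in your argument (or in the paper's toolbox, short of the monotonicity device) excludes this. Two smaller points: once you have $L=\inf\mathcal E$ for every subsequential limit you are done, since $\inf\mathcal E$ is a number---uniqueness of the \emph{minimiser} is irrelevant; and the substitution $g=\sqrt{-\hat w}$ does not help with convexity, because the nonlocal term is quadratic in $\hat w=-g^2$ and hence quartic in $g$; the right observation would be joint convexity of $(\hat w',\hat w)\mapsto(\hat w')^2/(-\hat w)$ on $\{\hat w<0\}$, as in Proposition \ref{pro:exist_unique}.
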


\begin{definition}
\label{def:e}
The function $e : \{\pm 1\} \to \R$ is defined by
\[
e(-1)=e_{1 + \cos \alpha} \quad \text{and} \quad e(1) = e_{1 - \cos \alpha}.
\]
\end{definition}

\begin{proof}[Proof of Theorem \ref{thm:core_energy_limit}]
Define
\[
f(\epsilon) = \left(\log \frac{1}{\delta}\right)^2 \inf_{M_\gamma} E_\epsilon^\gamma - \frac{\pi \gamma^2}{2} \log \frac{1}{\delta}.
\]
Fix $\epsilon > 0$ small enough and choose a number $R \in (1, \frac{1}{\delta})$.
Let $\mu \in M_\gamma$ be the minimiser of $E_\epsilon^\gamma$
and let $v \in W^{1, 2}(B_1^+(0))$ be the solution of
\eqref{eqn:core_harmonicv}--\eqref{eqn:core_boundary2v}.
Define
\[
\tilde{\mu}(x_1) = \begin{cases}
\displaystyle \frac{\gamma \log \frac{1}{|x_1|}}{\log \frac{1}{\delta}} + 1 - \gamma & \text{if $\frac{1}{R} \le |x_1| < 1$}, \\[4\jot]
\displaystyle \left(1 - \frac{\log R}{\log \frac{1}{\delta}}\right) \mu(R x_1) + \frac{\log R}{\log \frac{1}{\delta}} & \text{if $ |x_1| \le \frac{1}{R}$},
\end{cases}
\]
and
\[
\tilde{v}(x) = \begin{cases}
\displaystyle \frac{\gamma \log \frac{1}{|x|}}{\log \frac{1}{\delta}} + 1 - \gamma & \text{if $\frac{1}{R} \le |x| < 1$}, \\[4\jot]
\displaystyle \left(1 - \frac{\log R}{\log \frac{1}{\delta}}\right) v(Rx) + \frac{\log R}{\log \frac{1}{\delta}} & \text{if $ |x| \le \frac{1}{R}$}.
\end{cases}
\]
Then we have $\tilde{\mu} \in M_\gamma$ and
$\tilde{v} \in W_{\tilde{\mu}}^{1, 2}(B_1^+(0))$. Moreover, by Proposition \ref{prop_min},
\[
1 + \tilde{\mu}(x_1) \ge 1 + \mu(R x_1)\geq 2-\gamma>0
\]
and
\[
1 - \tilde{\mu}(x_1) = \left(1 - \frac{\log R}{\log \frac{1}{\delta}}\right) (1 - \mu(R x_1))>0
\]
for $|x_1| \le \frac{1}{R}$. Therefore, we compute
\[
\frac{\epsilon}{R} \int_{-1/R}^{1/R} \frac{(\tilde{\mu}')^2}{1 - \tilde{\mu}^2} \, dx_1 \le \left(1 - \frac{\log R}{\log \frac{1}{\delta}}\right) \epsilon \int_{-1}^1 \frac{(\mu')^2}{1 - \mu^2} \, dx_1.
\]
Furthermore,
\[
\int_{B_{1/R}^+(0)} |\nabla \tilde{v}|^2 \, dx = \left(1 - \frac{\log R}{\log \frac{1}{\delta}}\right)^2 \int_{B_1^+(0))} |\nabla v|^2 \, dx.
\]
We also observe that for $x_1 \in [-1, -\frac{1}{R}) \cup (\frac{1}{R}, 1]$,
\[
1 - \tilde{\mu}^2 = \gamma \left(1 - \frac{\log \frac{1}{|x_1|}}{\log \frac{1}{\delta}}\right) \left(2 - \gamma + \frac{\gamma \log \frac{1}{|x_1|}}{\log \frac{1}{\delta}}\right) \ge \gamma (2 - \gamma) \left(1 - \frac{\log \frac{1}{|x_1|}}{\log \frac{1}{\delta}}\right).
\]
Hence
\[
\begin{split}
\int_{(-1,-1/R)\cup (1/R, 1)} \frac{(\tilde{\mu}')^2}{1 - \tilde{\mu}^2} \, dx_1 & \le \frac{2\gamma}{(2 - \gamma) \log \frac{1}{\delta}} \int_{1/R}^1 \frac{dx_1}{x_1^2 \left(\log \frac{1}{\delta} - \log \frac{1}{x_1}\right)} \\
& =  \frac{2\gamma}{(2 - \gamma) \log \frac{1}{\delta}} \int_1^R \frac{ds}{\log \frac{1}{\delta} - \log s}.
\end{split}
\]
Define
\[
g(R) = \frac{\gamma}{(2 - \gamma) \log \frac{1}{\delta}} \int_1^R \frac{ds}{\log \frac{1}{\delta} - \log s}.
\]
Finally, we compute
\[
\int_{B_1^+(0) \backslash B_{1/R}(0)} |\nabla \tilde{v}|^2 \, dx = \frac{\pi \gamma^2 \log R}{\left(\log \frac{1}{\delta}\right)^2}.
\]
Let
\[
\tilde{\epsilon} = \tilde{\epsilon}(R) = \frac{\epsilon}{R} \left(1 - \frac{\log R}{\log \frac{1}{\delta}}\right)<\eps.
\]
Then we have
\[
\begin{split}
E_{\tilde{\epsilon}}^\gamma(\tilde{\mu}) & \le \frac{\tilde{\epsilon}}{2} \int_{-1}^1 \frac{(\tilde{\mu}')^2}{1 - \tilde{\mu}^2} \, dx_1 + \frac{1}{2} \int_{B_1^+(0)} |\nabla \tilde{v}|^2 \, dx \\
& \le \left(1 - \frac{\log R}{\log \frac{1}{\delta}}\right)^2 E_\epsilon^\gamma(\mu) + 
\frac{\epsilon g(R)}{R} \left(1 - \frac{\log R}{\log \frac{1}{\delta}}\right) + \frac{\pi \gamma^2 \log R}{2\left(\log \frac{1}{\delta}\right)^2}.
\end{split}
\]
It follows that
\[
f(\tilde{\epsilon}) \le \left(\log \frac{1}{\tilde{\delta}}\right)^2 \left(\left(1 - \frac{\log R}{\log \frac{1}{\delta}}\right)^2 E_\epsilon^\gamma(\mu) + 
\frac{\epsilon g(R)}{R} \left(1 - \frac{\log R}{\log \frac{1}{\delta}}\right) + \frac{\pi \gamma^2 \log R}{2\left(\log \frac{1}{\delta}\right)^2}\right)
- \frac{\pi \gamma^2}{2} \log \frac{1}{\tilde{\delta}},
\]
where $\tilde{\delta} = \tilde{\epsilon} \log \frac{1}{\tilde{\epsilon}}$.
Since we have equality for $R = 1$, we can use this inequality
to estimate the left-hand superdifferential
\[
f_-'(\epsilon) = \liminf_{s \nearrow \epsilon} \frac{f(s) - f(\epsilon)}{s - \epsilon}.
\]
Indeed, note first that
\[
\left.\frac{d\tilde{\epsilon}}{dR}\right|_{R = 1} = - \epsilon \left(1 + \frac{1}{\log \frac{1}{\delta}}\right)
\]
and
\[
\left.\frac{d\tilde{\delta}}{dR}\right|_{R = 1} = \epsilon \left(1 - \log \frac{1}{\epsilon}\right) \left(1 + \frac{1}{\log \frac{1}{\delta}}\right).
\]
The above inequality therefore implies that
for all $\epsilon \in (0, e^{-2}]$,
\[
\begin{split}
- \epsilon \left(1 + \frac{1}{\log \frac{1}{\delta}}\right) f_-'(\epsilon) & \le 2\left(\frac{\log \frac{1}{\delta}}{\log \frac{1}{\epsilon}} \left(\log \frac{1}{\epsilon} - 1\right) \left(1 + \frac{1}{\log \frac{1}{\delta}}\right) - \log \frac{1}{\delta}\right) E_\epsilon^\gamma(\mu) \\
& \quad + \frac{\gamma \epsilon}{2 - \gamma} + \frac{\pi \gamma^2}{2} + \frac{\pi \gamma^2}{2 \log \frac{1}{\epsilon}} \left(1 - \log \frac{1}{\epsilon}\right) \left(1 + \frac{1}{\log \frac{1}{\delta}}\right) \\
& = \left(\log \log \frac{1}{\epsilon} - 1\right) \left(\frac{2E_\epsilon^\gamma(\mu)}{\log \frac{1}{\epsilon}} - \frac{\pi \gamma^2}{2 \log \frac{1}{\epsilon} \log \frac{1}{\delta}}\right) + \frac{\epsilon \gamma}{2 - \gamma} \\
& \le \frac{C \log \log \frac{1}{\epsilon}}{\left(\log \frac{1}{\epsilon}\right)^2}
\end{split}
\]
for a constant $C = C(\gamma)$ by Proposition \ref{prop:core_construction}.
Hence
\[
f_-'(\epsilon) \ge - \frac{C \log \log \frac{1}{\epsilon}}{\epsilon \left(\log \frac{1}{\epsilon}\right)^2}.
\]
Note that
\[
\int_0^{e^{-2}} \frac{\log \log \frac{1}{\epsilon}}{\epsilon \left(\log \frac{1}{\epsilon}\right)^2} \, d\epsilon < \infty.
\]
Thus if we denote
\[
e_\gamma = \liminf_{\epsilon \searrow 0} f(\epsilon),
\]
then for any $\eta > 0$ we can find a number $\epsilon_0 > 0$
such that
\[
f(\epsilon_0) \le e_\gamma + \frac{\eta}{2}
\]
and at the same time,
\[
\int_\epsilon^{\epsilon_0} f_-'(s) \, ds \ge - \frac{\eta}{2}
\]
for any $\epsilon \in (0, \epsilon_0]$. It then follows that
\[
f(\epsilon) \le f(\epsilon_0) - \int_\epsilon^{\epsilon_0} f_-'(s) \, ds \le e_\gamma + \eta.
\]
Hence we have in fact
\[
e_\gamma = \lim_{\epsilon \searrow 0} f(\epsilon),
\]
as required.
\end{proof}

\section{Several walls}

We now consider a given $a \in A_N$ and $d \in \{\pm 1\}^N$
and we study magnetisations $m \in M(a, d)$.
In particular, we want to estimate $\inf_{M(a, d)} E_\epsilon$
and derive some inequalities for the magnetisation and the
stray field in terms of the energy excess
\[
E_\epsilon(m) - \inf_{M(a, d)} E_\epsilon.
\]

\subsection{Upper bound for the minimal energy}

The purpose of this section is to prove the following
upper bound for the energy by a direct construction. It is a generalization of Proposition \ref{prop:core_construction} to configurations with several walls $a\in A_N$.

\begin{proposition} \label{prop:construction}
For every $R \in (0, 1]$ there exists a constant $C_0>0$
such that for all $\epsilon \in (0, \frac{1}{2}]$, $a \in A_N$ with $\rho(a) \ge R$, and $d \in \{\pm 1\}^N$, the inequality
\[
\inf_{M(a, d)} E_\epsilon \le \frac{\pi}{2\log \frac{1}{\delta}} \sum_{n = 1}^N (d_n - \cos \alpha)^2 + \frac{C_0}{\left(\log \frac{1}{\delta}\right)^2}
\]
holds true.
\end{proposition}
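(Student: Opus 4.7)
The plan is to exhibit an explicit competitor $m^{*}\in M(a,d)$ obtained by superposing localized N\'eel-wall profiles, one at each $a_n$. Since $\rho(a)\ge R$, the half-disks $B_R^+(a_n,0)$ are pairwise disjoint and compactly contained in $(-1,1)\times[0,\infty)$, so each wall can be housed inside its own half-disk. Setting $\gamma_n = d_n-\cos\alpha$, define the symmetric logarithmic profile
\[
\mu_n(x_1) = d_n-\gamma_n\,\frac{\log\bigl((x_1-a_n)^2+\delta^2\bigr)-\log\delta^2}{\log(R^2+\delta^2)-\log\delta^2},\qquad |x_1-a_n|\le R,
\]
so that $\mu_n(a_n)=d_n$, $\mu_n(a_n\pm R)=\cos\alpha$, and $|\mu_n|\le 1$. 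Let $m_1^{*}=\mu_n$ on each $(a_n-R,a_n+R)$ and $m_1^{*}=\cos\alpha$ elsewhere in $(-1,1)$; take $m_2^{*}=\pm\sqrt{1-(m_1^{*})^2}$ with signs matched consistently between the walls (this is possible since $m_2^{*}$ vanishes at each $a_n$). Then $m^{*}\in M(a,d)$.

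The exchange energy of $m^{*}$ splits as a sum over the disjoint intervals $(a_n-R,a_n+R)$. After the change of variables $y=(x_1-a_n)/R$ each summand becomes the exchange integral for the Proposition~\ref{prop:core_construction} profile with parameter $\delta/R$ in place of $\delta$ and with $\gamma=|\gamma_n|$; the same pointwise estimate carried out in the proof of Proposition~\ref{prop:core_construction} then gives a contribution of order $O\bigl((\log\tfrac1\delta)^{-2}\bigr)$ per wall. For the magnetostatic term I would use the identity \eqref{stray_uniq} together with the variational description of the homogeneous $\dot H^{1/2}(\R)$-seminorm as the infimum of half-space Dirichlet energies---equivalently, the harmonic conjugate of $U(m^{*})$ is the harmonic extension of the trace $m_1^{*}$ (extended by $\cos\alpha$ on $\R\setminus(-1,1)$), and the latter minimises the Dirichlet energy among all $\dot W^{1,2}(\R^2_+)$-functions with that trace. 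Hence for any admissible test function $v$,
\[
\tfrac12\int_{\R^2_+}|\nabla U(m^{*})|^2\,dx\;\le\;\tfrac12\int_{\R^2_+}|\nabla v|^2\,dx.
\]

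Such a test function is built by setting $v(x)=\mu_n\bigl(a_n+|x-(a_n,0)|\bigr)$ on each $B_R^+(a_n,0)$ and $v\equiv\cos\alpha$ outside. Continuity of $v$ across the semicircle $\partial^+B_R(a_n,0)$ follows from $\mu_n(a_n\pm R)=\cos\alpha$, and $v-\cos\alpha$ has compact support with bounded gradient, so $v\in\dot W^{1,2}(\R^2_+)$ with the correct boundary trace. Because the half-disks are disjoint, the Dirichlet integral decomposes into a sum, and each summand is a scale-invariant copy of the Dirichlet energy estimated in the proof of Proposition~\ref{prop:core_construction} (with parameter $\delta/R$), so
\[
\tfrac12\int_{B_R^+(a_n,0)}|\nabla v|^2\,dx\;\le\;\frac{\pi\gamma_n^2}{2\log\tfrac1\delta}+\frac{C}{(\log\tfrac1\delta)^2}.
\]
Summing over $n$ and adding the exchange estimate yields the desired inequality, with $C_0$ depending only on $R$ since $N$ and $\alpha$ are fixed.

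\textbf{The main obstacle} is justifying the dual variational characterization of the magnetostatic energy as the minimum of Dirichlet energies over $\dot W^{1,2}(\R^2_+)$-extensions of a prescribed trace, and checking that the piecewise-defined $v$ is indeed admissible (continuous across the circular arcs, with finite global Dirichlet energy and the correct trace on all of $\R\times\{0\}$). Once this comparison principle is in place, the remaining work is a direct adaptation of the single-wall computation in Proposition~\ref{prop:core_construction} on each rescaled half-disk; the replacement of $\delta$ by $\delta/R$ only shifts $\log\tfrac1\delta$ by a bounded $R$-dependent constant, which is absorbed into the $O\bigl((\log\tfrac1\delta)^{-2}\bigr)$ error term.
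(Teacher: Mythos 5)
Your proposal is correct and follows essentially the same route as the paper: the same logarithmic profile on disjoint half-disks of radius $R$ around each $a_n$, the same exchange-energy estimate as in Proposition~\ref{prop:core_construction}, and the same passage to the harmonic conjugate of $U(m)$ so that the magnetostatic energy becomes the minimal Dirichlet energy over $\dot W^{1,2}(\R^2_+)$-extensions of the trace $m_1-\cos\alpha$, which is then bounded by the radially symmetric competitor. The ``main obstacle'' you flag is resolved in the paper exactly as you sketch, by writing $\nabla v=\nabla^{\perp}U(m)$ and invoking Dirichlet's principle for $v$.
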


\begin{proof}
We will in fact prove a more explicit estimate, showing that
there exists an $m \in M(a, d)$ with
\begin{equation} \label{eqn:construction_estimate}
E_\epsilon(m) \le \frac{\sum_{n = 1}^N (d_n - \cos \alpha)^2}{2\log \sqrt{\frac{R^2}{\delta^2} + 1}} \left(\pi + \frac{2}{\sin^2 \alpha \log \frac{1}{\epsilon}} \int_{-\infty}^\infty \frac{t^2}{(t^2 + 1)^2 \log (t^2 + 1)} \, dt\right).
\end{equation}
This will clearly imply the statement of the proposition.
The proof is similar to the proof of Proposition
\ref{prop:core_construction}.
Let $\gamma_n = d_n - \cos \alpha$. Define
\[
f(x_1) = \frac{\log (x_1^2 + \delta^2) - \log (R^2 + \delta^2)}{\log \delta^2 - \log (R^2 + \delta^2)}
\]
and $m_1:(-1, 1)\to [-1, 1]$, given by
\[
m_1(x_1) = \begin{cases}
\cos \alpha + \gamma_n f(x_1 - a_n) & \text{if $x_1 \in (a_n - R, a_n + R)$ for $n = 1, \ldots, N$,} \\
\cos \alpha & \text{else}.
\end{cases}
\]
Then there exists a function $m_2 : (-1,1) \to [-1, 1]$ such that
$m = (m_1, m_2) \in M(a, d)$.
Suppose that $x_1 \in (a_n - R, a_n + R)$.
If $d_n = -1$, then $1 - m_1(x_1) \ge 1 - \cos \alpha$ and
\[
1 + m_1(x_1) = (1 + \cos \alpha)\left(1 - f(x_1 - a_n)\right).
\]
If $d_n = 1$, then $1 + m_1(x_1) \ge 1 + \cos \alpha$ and
\[
1 - m_1(x_1) = (1 - \cos \alpha)\left(1 - f(x_1 - a_n)\right).
\]
In both cases,
\[
1 - (m_1(x_1))^2 \ge \sin^2 \alpha \left(1 - f(x_1 - a_n)\right).
\]
Thus as in the proof of Proposition \ref{prop:core_construction},
we can estimate
\[
\begin{split}
\int_{a_n - R}^{a_n + R} |m'|^2 \, dx_1 & \le \frac{2\gamma_n^2}{\sin^2 \alpha \log \sqrt{\frac{R^2}{\delta^2} + 1}} \int_{-R}^R \frac{x_1^2}{(x_1^2 + \delta^2)^2 \log \left(\frac{x_1^2}{\delta^2} + 1\right)} \, dx_1 \\
& \le \frac{2\gamma_n^2}{\delta \sin^2 \alpha \log \sqrt{\frac{R^2}{\delta^2} + 1}} \int_{-\infty}^\infty  \frac{t^2}{(t^2 + 1)^2 \log (t^2 + 1)} \, dt.
\end{split}
\]
Summing over $n$, we find
\[
\int_{-1}^1 |m'|^2 \, dt \le \frac{2\sum_{n = 1}^N \gamma_n^2}{\delta \sin^2 \alpha \log \sqrt{\frac{R^2}{\delta^2} + 1}} \int_{-\infty}^\infty  \frac{t^2}{(t^2 + 1)^2 \log (t^2 + 1)} \, dt.
\]

Now consider the function $u = U(m)$ as defined on page \pageref{def:U}.
Since $\curl \nabla^\perp u = 0$,
there exists a function $v \in \dot{W}^{1, 2}(\R_+^2)$ such
that $\nabla v = \nabla^\perp u$ in $\R_+^2$. Since this means that
$v' = m_1'$ on $(-1, 1) \times \{0\}$, we can choose $v$
such that $v = m_1 - \cos \alpha$ on $(-1, 1) \times \{0\}$.
Then we also have $v = 0$ on $(-\infty, -1) \times \{0\}$ and on
$(1, \infty) \times \{0\}$, and of course $\Delta v = 0$ in $\R_+^2$.
Furthermore, the function has finite Dirichlet energy, and it
follows that it is the unique minimiser of the Dirichlet energy
under these boundary conditions.

Define $w : \R_+^2 \to \R$ by
\[
w(a_n + r \cos \theta, r \sin \theta) = m_1(a_n + r) - \cos \alpha \quad \textrm{for } 0 < r \le R \text{ and } 0 \le \theta \le \pi, \ n=1, \dots, N,
\]
while $w = 0$ in $\Omega_R(a)$. Then we compute, similarly to the
proof of Proposition \ref{prop:core_construction}, that
\[
\begin{split}
\int_{B_R^+(a_n, 0)} |\nabla w|^2 \, dx & = \frac{\pi \gamma_n^2}{\left(\log \sqrt{\frac{R^2}{\delta^2} + 1}\right)^2} \int_0^R \frac{r^3}{(r^2 + \delta^2)^2} \, dr \\
& = \frac{\pi \gamma_n^2}{2\left(\log \sqrt{\frac{R^2}{\delta^2} + 1}\right)^2} \int_{\delta^2}^{R^2 + \delta^2} \frac{r - \delta^2}{r^2} \, dr \le \frac{\pi \gamma_n^2}{\log \sqrt{\frac{R^2}{\delta^2} + 1}}.
\end{split}
\]
Hence
\[
\int_{\R_+^2} |\nabla w|^2 \, dx \le \frac{\pi \sum_{n = 1}^N \gamma_n^2}{\log \sqrt{\frac{R^2}{\delta^2} + 1}}.
\]
In particular
\[
\int_{\R_+^2} |\nabla u|^2 \, dx = \int_{\R_+^2} |\nabla v|^2 \, dx \le \int_{\R_+^2} |\nabla w|^2 \, dx \le \frac{\pi \sum_{n = 1}^N \gamma_n^2}{\log \sqrt{\frac{R^2}{\delta^2} + 1}}.
\]
If we combine these inequalities, then we obtain
\eqref{eqn:construction_estimate}.
\end{proof}

\subsection{Stray field estimates}

The following is the main result of this section and
one of the key ingredients for the proof of Theorem \ref{thm1}.
We recall the function $u_{a, d}^*$ from
Section \ref{sect:reduced}, solving $\Delta u_{a, d}^*=0$ in
$\R_+^2$ and $\dd{u_{a, d}^*}{x_2} = 0$ on $(-\infty, -1) \times \{0\}$
and $(1, \infty) \times \{0\}$, and with a piecewise constant
trace on $(-1, 1) \times \{0\}$ given by the values
\[
\sigma_n = \frac{\pi}{2} \left(\sum_{k = n + 1}^N (d_k - \cos \alpha) - \sum_{k = 1}^n (d_k - \cos \alpha)\right).
\]
It has the property that for $n = 1, \ldots, N$, the function
\[
x \mapsto u_{a, d}^*(x) - (d_n - \cos \alpha) \left(\arctan \left(\frac{x_2}{x_1 - a_n}\right) - \frac{\pi(x_1 - a_n)}{2|x_1 - a_n|}\right)
\]
is harmonic in $B_{\rho(a)}^+(a_n, 0)$ and constant on
$(a_n - \rho(a), a_n + \rho(a)) \times \{0\}$.
Standard elliptic estimates then imply that this function is smooth
near $(a_n, 0)$. In view of the energy estimates
\eqref{eqn:reduced_energy1} and \eqref{eqn:reduced_energy2},
we can make more quantitative statements as well: if $\rho(a) \ge R > 0$, then
\begin{equation} \label{eqn:pointwise_u*}
\left|\dd{u_{a, d}^*}{x_1}(x) + \frac{(d_n - \cos \alpha) x_2}{(x_1 - a_n)^2 + x_2^2}\right| + \left|\dd{u_{a, d}^*}{x_2}(x) - \frac{(d_n - \cos \alpha) (x_1 - a_n)}{(x_1 - a_n)^2 + x_2^2}\right| \le C
\end{equation}
for $x \in B_{\rho(a)}^+(a_n, 0)$, where $C = C(\alpha, N, R)$.

\begin{theorem} \label{thm:stray_field}
For any $R \in (0, \frac{1}{2}]$ and $C_0 > 0$, there exists a constant $C_1>0$ such
the following holds true. Let $a \in A_N$ with $\rho(a) \ge R$
and $d \in \{\pm 1\}^N$. Set
\[
\Gamma = \sum_{n = 1}^N (d_n - \cos \alpha)^2.
\]
Suppose that $\epsilon \in (0, \frac{1}{2}]$ with $\delta \le R$ and
$m \in M(a, d)$ with
\begin{equation}
\label{aprior_est}
E_\epsilon(m) \le \frac{\pi \Gamma}{2\log \frac{1}{\delta}} + \frac{C_0}{\left(\log \frac{1}{\delta}\right)^2}.
\end{equation}
Let $u = U(m)$ be the function defined on page \pageref{def:U}. Then
\begin{equation}
\label{estim11}
\int_{\Omega_\delta(a)} \left|\nabla u - \frac{\nabla u_{a, d}^*}{\log \frac{1}{\delta}}\right|^2 \, dx \le \frac{C_1}{\left(\log \frac{1}{\delta}\right)^2}
\end{equation}
and
\begin{equation}
\label{estim12}
\int_{\Omega_\delta(a)} |\nabla u|^2 \, dx \ge \frac{\pi \Gamma}{\log \frac{1}{\delta}} - \frac{C_1}{\left(\log \frac{1}{\delta}\right)^2}.
\end{equation}
\end{theorem}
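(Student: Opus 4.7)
The proof proceeds from the identity
\[
\int_{\Omega_\delta(a)} \Bigl|\nabla u - \frac{\nabla u_{a,d}^*}{\log\frac{1}{\delta}}\Bigr|^2 dx = \int_{\Omega_\delta(a)} |\nabla u|^2\,dx + \frac{1}{(\log\frac{1}{\delta})^2}\int_{\Omega_\delta(a)} |\nabla u_{a,d}^*|^2\,dx - \frac{2}{\log\frac{1}{\delta}}\int_{\Omega_\delta(a)} \nabla u\cdot \nabla u_{a,d}^*\,dx,
\]
and the lower bound \eqref{estim12} will be read off from the same expansion. By the hypothesis \eqref{aprior_est} and the nonnegativity of the exchange energy, the first term on the right is at most $\pi\Gamma/\log\frac{1}{\delta}+O((\log\frac{1}{\delta})^{-2})$. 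For the middle term, the decomposition $u_{a,d}^* = \sum_n \gamma_n u_{a_n}$ from Proposition~\ref{prop:minimiser} together with Lemmas~\ref{cross_product} and \ref{H^1_norm} gives
\[
\int_{\Omega_\delta(a)} |\nabla u_{a,d}^*|^2\,dx = \pi\Gamma \log\frac{1}{\delta} + 2W_1(a,d) + O(\delta),
\]
so it contributes $\pi\Gamma/\log\frac{1}{\delta}+O((\log\frac{1}{\delta})^{-2})$ after division.

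The heart of the argument is therefore to prove that the cross term equals $\pi\Gamma+O(1/\log\frac{1}{\delta})$. Integrating by parts, using harmonicity of $u_{a,d}^*$ and the Neumann condition of $u$ on the rays, one obtains
\[
\int_{\Omega_\delta(a)} \nabla u\cdot \nabla u_{a,d}^*\,dx = \int_{I_\delta} u_{a,d}^*\, m_1'\,dx_1 \;-\; \sum_{n=1}^N \int_{\partial^+ B_\delta(a_n,0)} u_{a,d}^* \,\bigl(\nu\cdot \nabla u\bigr)\,d\sigma,
\]
with $I_\delta = (-1,1)\setminus \bigcup_n (a_n-\delta,a_n+\delta)$ and $\nu$ pointing toward $a_n$ on each semicircle. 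Since $u_{a,d}^*$ takes only the values $\sigma_n$ on $I_\delta$ with $\sigma_{n-1}-\sigma_n=\pi\gamma_n$, summation by parts (and $m_1(\pm 1)=\cos\alpha$) reduces the flat-boundary contribution to
\[
-\pi\cos\alpha \sum_n \gamma_n + \pi\sum_n \gamma_n\,m_1(a_n-\delta) + \sum_n \sigma_n\bigl(m_1(a_n-\delta)-m_1(a_n+\delta)\bigr),
\]
which equals $\pi\Gamma$ up to an error controlled by $\max_n|m_1(a_n\pm\delta)-d_n|$. Applied locally around each $a_n$, Lemma~\ref{lemma:core_estimate} (whose hypothesis $\varphi(0)\in\pi\Z$ corresponds precisely to $m_1(a_n)=d_n$) gives $\sin^2\varphi(a_n\pm\delta) = O(F_n) = O(1/\log\frac{1}{\delta})$, where $F_n$ is the local energy near $a_n$, and thence $|m_1(a_n\pm\delta)-d_n|\le C(1-\cos|\varphi|)\le C\sin^2\varphi = O(1/\log\frac{1}{\delta})$, as needed.

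The semicircle terms are the delicate part. The key is the decomposition $u_{a,d}^*(x) = \gamma_n\bigl(\arctan\frac{x_2}{x_1-a_n} - \frac{\pi(x_1-a_n)}{2|x_1-a_n|}\bigr) + h_n(x)$ valid near $a_n$, where $h_n$ is harmonic with continuous boundary data at $a_n$ and therefore smooth near $(a_n,0)$. For the smooth piece, $h_n = h_n(a_n,0)+O(\delta)$ on $\partial^+ B_\delta(a_n,0)$; combined with the identity $\int_{\partial^+ B_\delta(a_n,0)} \nu\cdot \nabla u\,d\sigma = m_1(a_n+\delta)-m_1(a_n-\delta) = O(1/\log\frac{1}{\delta})$ (which follows from $\Delta u = 0$ in $B^+_\delta(a_n,0)$ and $\partial_{x_2} u = -m_1'$), and with a pigeonhole choice of a radius $r\in(\delta,2\delta)$ on which $r\|\nabla u\|_{L^2(\partial^+ B_r)}^2 = O(1/\log\frac{1}{\delta})$ (extracted from $\int_{B^+_{2\delta}\setminus B^+_\delta}|\nabla u|^2 = O(1/\log\frac{1}{\delta})$), the smooth contribution is $O(1/\log\frac{1}{\delta})$. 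For the angular piece, after replacing $\delta$ with the same good radius $r$, one writes the integrand as the imaginary part of $-i\log(x-a_n)$ times $\partial_\nu u$ and integrates by parts once more on the semicircle, trading the normal derivative of the harmonic function $u$ for its tangential derivative and transporting the remaining integral back to the flat boundary via Green's formula; the result is again expressible through boundary values of $m_1$ up to a remainder of order $1/\log\frac{1}{\delta}$.

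Assembling the three terms, the principal quantities $\pi\Gamma/\log\frac{1}{\delta}$ cancel and the remainder is $O((\log\frac{1}{\delta})^{-2})$, proving \eqref{estim11}. Rearranging the expansion and using $\int|\nabla u - \nabla u_{a,d}^*/\log\frac{1}{\delta}|^2 \ge 0$ together with the cross-term estimate gives $\int_{\Omega_\delta(a)} |\nabla u|^2 \ge 2\cdot\pi\Gamma/\log\frac{1}{\delta} - \pi\Gamma/\log\frac{1}{\delta} - O((\log\frac{1}{\delta})^{-2}) = \pi\Gamma/\log\frac{1}{\delta} - O((\log\frac{1}{\delta})^{-2})$, which is \eqref{estim12}. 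The main obstacle is the semicircle estimate involving the angular singularity of $u_{a,d}^*$ at each wall; it is here that the pigeonhole radius selection and the decomposition of $u_{a,d}^*$ into singular and smooth parts are essential.
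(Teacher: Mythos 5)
Your overall algebra is fine: if one can show the cross term $\int_{\Omega_\delta(a)}\nabla u\cdot\nabla u_{a,d}^*\,dx=\pi\Gamma+O(1/\log\frac1\delta)$, then both \eqref{estim11} and \eqref{estim12} follow exactly as you say. The gap is that your proof of the cross-term estimate is circular. You reduce it to $|m_1(a_n\pm\delta)-d_n|=O(1/\log\frac1\delta)$ and invoke Lemma~\ref{lemma:core_estimate}; but that lemma rests on the Pohozaev identity and hence on the Euler--Lagrange equation \eqref{eqn:Pohozaev_Euler-Lagrange}, whereas Theorem~\ref{thm:stray_field} is stated for an \emph{arbitrary} $m\in M(a,d)$ satisfying only the energy bound \eqref{aprior_est} --- the paper stresses that this is precisely the point of the theorem, and Remark~\ref{rem:non_min} records that the core estimates for non-minimisers are a \emph{consequence} of Theorem~\ref{thm:stray_field}, not an input. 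Without the Euler--Lagrange equation the only a priori control is $\int_{a_n-\delta}^{a_n+\delta}|m_1'|\,dx_1\le 2\delta\int\frac{(m_1')^2}{1-m_1^2}\,dx_1$ (Lemma~\ref{lemma:W11_estimate}), and since \eqref{aprior_est} only bounds the exchange energy by $O(1/\log\frac1\delta)$, this gives $|m_1(a_n\pm\delta)-d_n|\le C\delta/(\epsilon\log\frac1\delta)=O(1)$, not $O(1/\log\frac1\delta)$. The missing $1/\log\frac1\delta$ is exactly equivalent to knowing that the exchange energy is $O((\log\frac1\delta)^{-2})$, i.e.\ to \eqref{estim12} itself. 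The paper breaks this circle differently: it tests against an interpolated comparison function $\xi_s$, keeps the error in the form $\frac{Cs}{\epsilon\log\frac1\delta}\bigl(2E_\epsilon(m)-\|\nabla u\|_{L^2}^2\bigr)$ so that the troublesome factor is the exchange energy itself, chooses $s$ implicitly by \eqref{eqn:definition_s}, and runs a three-case analysis to show $s\gtrsim\delta$; only then do \eqref{estim12} and, a posteriori, your pointwise bound follow.

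A second, quantitative problem sits in your semicircle terms. Even granting pointwise control of $m_1$ at $a_n\pm\delta$, the quantities you need on $\partial^+B_r(a_n,0)$ (the flux $\int\partial_\nu u\,d\sigma$ weighted by the bounded angular factor, and the average of the conjugate function $w$ over the semicircle) are only controlled via Cauchy--Schwarz on a circle of radius $\sim\delta$ carrying Dirichlet energy $O(1/\log\frac1\delta)$; this yields errors of order $(\log\frac1\delta)^{-1/2}$ in the cross term, hence $O((\log\frac1\delta)^{-3/2})$ in \eqref{estim11} rather than the required $O((\log\frac1\delta)^{-2})$. So even the non-circular parts of the argument do not close at the stated order. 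You would need to either adopt the paper's comparison-function strategy or supply a substantially sharper analysis of the near-wall boundary terms.
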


This statement is somewhat similar to Theorem \ref{thm:core_energy_estimate}.
The main difference, apart from the fact that we consider several N\'eel walls
here, is that we only assume a suitable bound for the energy, whereas in
Theorem \ref{thm:core_energy_estimate}, we study minimisers.
Before we prove the theorem, we establish the following
auxiliary result.

\begin{lemma} \label{lemma:W11_estimate}
Let $s > 0$ and $\mu \in W^{1, 2}(-s, s)$.
If $\mu(0) = 1$ and $|\mu| \le 1$, then
\[
\int_{-s}^s |\mu'| \, dx_1 \le 2s \int_{-s}^s \frac{(\mu')^2}{1 - \mu^2} \, dx_1.
\]
\end{lemma}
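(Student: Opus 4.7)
The plan is to exploit that $\mu(0)=1$ is a maximum (since $|\mu|\le 1$), which lets us control $1-\mu^2$ pointwise by the total variation of $\mu$, and then to combine this with Cauchy--Schwarz. First I would split the integral at $0$ and treat each half independently; by symmetry of the argument it suffices to bound $I_+:=\int_0^s|\mu'|\,dx_1$ by $2s$ times $\int_0^s \frac{(\mu')^2}{1-\mu^2}\,dx_1$.

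The key pointwise estimate is this: for every $x_1\in(0,s)$,
\[
1-\mu(x_1)=\mu(0)-\mu(x_1)=-\int_0^{x_1}\mu'(t)\,dt\le \int_0^{x_1}|\mu'|\,dt\le I_+,
\]
and therefore, using $|\mu|\le 1$ to write $1+\mu\le 2$,
\[
1-\mu(x_1)^2=(1-\mu(x_1))(1+\mu(x_1))\le 2I_+.
\]
Dropping this uniform bound into the denominator yields
\[
\int_0^s\frac{(\mu')^2}{1-\mu^2}\,dx_1\ge \frac{1}{2I_+}\int_0^s(\mu')^2\,dx_1.
\]

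To close the argument I would apply Cauchy--Schwarz in the opposite direction to produce a factor of $I_+^2/s$ on the right:
\[
I_+=\int_0^s|\mu'|\,dx_1\le \sqrt{s}\left(\int_0^s(\mu')^2\,dx_1\right)^{1/2},
\]
so $\int_0^s(\mu')^2\,dx_1\ge I_+^2/s$. Combining,
\[
\int_0^s\frac{(\mu')^2}{1-\mu^2}\,dx_1\ge \frac{I_+^2}{2s\,I_+}=\frac{I_+}{2s},
\]
which rearranges to $I_+\le 2s\int_0^s\frac{(\mu')^2}{1-\mu^2}\,dx_1$. The analogous bound on $(-s,0)$ follows verbatim with $\int_0^{x_1}$ replaced by $\int_{x_1}^0$. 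Adding the two halves yields the claim.

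No real obstacle is anticipated: the only subtle point is handling the possibility that $I_+=0$ (in which case $\mu\equiv 1$ on $(0,s)$ and both sides of the target inequality on that half vanish, and the same trivial observation applies on the other side), or that $\int_0^s\frac{(\mu')^2}{1-\mu^2}\,dx_1=+\infty$, in which case the inequality is trivially true. Everything else is just Cauchy--Schwarz and the elementary bound $1+\mu\le 2$.
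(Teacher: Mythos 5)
Your proof is correct. It does, however, follow a genuinely different route from the paper's. The paper splits $|\mu'| = \frac{|\mu'|}{\sqrt{1-\mu^2}}\cdot\sqrt{1-\mu^2}$ by Cauchy--Schwarz and is then led to estimate the auxiliary quantity $\int_{-s}^s(1-\mu^2)\,dx_1$; this it does by writing $1-\mu^2 = -2\int_0^{x_1}\mu\mu'\,dt$, applying Cauchy--Schwarz twice more, and solving the resulting self-improving inequality $\int(1-\mu^2)\le 2s\bigl(\int\frac{(\mu')^2}{1-\mu^2}\bigr)^{1/2}\bigl(\int(1-\mu^2)\bigr)^{1/2}$. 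You instead exploit the normalisation $\mu(0)=1$ to get the uniform pointwise bound $1-\mu^2\le 2\int_0^s|\mu'|\,dt$ on each half-interval, which turns the weighted integral directly into $\frac{1}{2I_\pm}\int(\mu')^2$, and then a single Cauchy--Schwarz between the $L^1$ and $L^2$ norms of $\mu'$ closes the loop. Both arguments use the same two ingredients (the vanishing of $1-\mu^2$ at $0$ and Cauchy--Schwarz) and land on the same constant $2s$, but yours is structurally simpler --- one application of Cauchy--Schwarz instead of three, and no intermediate quantity to bootstrap --- and it gives the slightly sharper conclusion that each half-interval separately satisfies $\int_0^s|\mu'|\,dx_1\le 2s\int_0^s\frac{(\mu')^2}{1-\mu^2}\,dx_1$. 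Your handling of the degenerate cases ($I_+=0$, or an infinite right-hand side, or the set where $|\mu|=1$ on which $\mu'=0$ a.e.) is also sound.
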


\begin{proof}
By the Cauchy-Schwarz inequality, we have
\begin{equation}
\label{Young_m1}
\int_{-s}^s |\mu'|\, dx_1 \le \left(\int_{-s}^s \frac{(\mu')^2}{1 - \mu^2}\, dx_1\right)^{1/2} \left(\int_{-s}^s (1 - \mu^2) \, dx_1\right)^{1/2}.
\end{equation}
Since $\mu(0) = 1$, any $x_1 \in (-s, s)$ will satisfy
\[
\begin{split}
1 - (\mu(x_1))^2 & = -2 \int_0^{x_1} \mu(t) \mu'(t) \, dt \\
& \le 2 \left|\int_{0}^{x_1} \frac{(\mu')^2}{1 - \mu^2} \, dt\right|^{1/2} \left|\int_{0}^{x_1} \mu^2 (1 - \mu^2)\, dt\right|^{1/2}.
\end{split}
\]
Integrating over $(-s, s)$, recalling that $|\mu| \le 1$, and using the Cauchy-Schwarz inequality, we obtain
\[
\begin{split}
\int_{-s}^s (1- \mu^2) \, dx_1 & \le 2 \left(\int_{-s}^s \left|\int_{0}^{x_1} \frac{(\mu')^2}{1 - \mu^2} \, dt\right| \, dx_1\right)^{1/2} \left(\int_{-s}^s \left|\int_{0}^{x_1} (1 - \mu^2) \, dt\right| \, dx_1\right)^{1/2} \\
& \le 2s \left(\int_{-s}^s \frac{(\mu')^2}{1 - \mu^2} \, dx_1\right)^{1/2} \left(\int_{-s}^s (1 - \mu^2) \, dx_1\right)^{1/2},
\end{split}
\]
which leads to
\[
\int_{-s}^s (1 - \mu^2) \, dx_1 \le 4s^2 \int_{-s}^s \frac{(\mu')^2}{1 - \mu^2} \, dx_1.
\]
Combining this with \eqref{Young_m1}, we obtain the desired inequality.
\end{proof}

\begin{proof}[Proof of Theorem \ref{thm:stray_field}]
It is clear that it suffices to prove the inequalities for
small values of $\epsilon$.
We modify the functions $u_{a, d}^*$
as follows: for a fixed $s \in (0, R]$, let $\xi_s \in \dot{W}^{1, 2}(\R_+^2)$
be such that
\[
\xi_s(x) = \frac{u_{a, d}^*(x)}{\log \frac{1}{\delta}}
\]
for $x \in \Omega_s(a)$ and
\[
\xi_s(a_n + r\cos \theta, r\sin \theta) = \frac{r u_{a, d}^*(a_n + s\cos \theta, s\sin \theta)}{s \log \frac{1}{\delta}} + \left(1 - \frac{r}{s}\right) \frac{\sigma_{n - 1} + \sigma_n}{2 \log \frac{1}{\delta}}
\]
for $0 < r < s$, $0 < \theta < \pi$, and $n = 1, \ldots, N$.
Then we have
\begin{equation} \label{eqn:xi_energy}
\int_{\R_+^2} |\nabla \xi_s|^2 \, dx \le \frac{\pi \Gamma \log \frac{1}{s} + C_1}{\left(\log \frac{1}{\delta}\right)^2}
\end{equation}
for a constant $C_1 = C_1(\alpha, N, R) > 0$.
This follows from \eqref{eqn:reduced_energy1}, \eqref{eqn:reduced_energy2},
and \eqref{eqn:pointwise_u*}.

We observe that
\[
\begin{split}
\frac{\pi \Gamma}{\log \frac{1}{\delta}} & = \int_{-1}^1 \frac{u_{a, d}^*(x_1, 0)}{\log \frac{1}{\delta}} m_1'(x_1) \, dx_1 \\
& = \int_{-1}^1 \xi_s(x_1, 0) m_1'(x_1) \, dx_1 - \int_{-1}^1 \left(\xi_s(x_1, 0) - \frac{u_{a, d}^*(x_1, 0)}{\log \frac{1}{\delta}}\right) m_1'(x_1) \, dx_1 \\
& = \int_{\R_+^2} \nabla \xi_s \cdot \nabla u \, dx - \int_{-1}^1 \left(\xi_s(x_1, 0) - \frac{u_{a, d}^*(x_1, 0)}{\log \frac{1}{\delta}}\right) m_1'(x_1) \, dx_1.
\end{split}
\]
We have a constant $C_2 = C_2(\alpha, N, R) > 0$ such that
\[
\left|\int_{-1}^1 \left(\xi_s(x_1, 0) - \frac{u_{a, d}^*(x_1, 0)}{\log \frac{1}{\delta}}\right) m_1'(x_1) \, dx_1\right| \le \frac{C_2}{\log \frac{1}{\delta}} \sum_{n = 1}^N \int_{a_n - s}^{a_n + s} |m_1'| \, dx_1.
\]
Thus by Lemma \ref{lemma:W11_estimate},
\[
\left|\int_{-1}^1 \left(\xi_s(x_1, 0) - \frac{u_{a, d}^*(x_1, 0)}{\log \frac{1}{\delta}}\right) m_1'(x_1) \, dx_1\right| \le \frac{2C_2 s}{\epsilon \log \frac{1}{\delta}} \left(2E_\epsilon(m) - \|\nabla u\|_{L^2(\R_+^2)}^2\right).
\]
We conclude that
\begin{equation} \label{eqn:key_estimate}
\frac{\pi \Gamma}{\log \frac{1}{\delta}} \le \frac{2C_2 s}{\epsilon \log \frac{1}{\delta}} \left(2E_\epsilon(m) - \|\nabla u\|_{L^2(\R_+^2)}^2\right) + \int_{\R_+^2} \nabla \xi_s \cdot \nabla u \, dx.
\end{equation}
Using the Cauchy-Schwarz inequality and \eqref{eqn:xi_energy}, we obtain
\begin{equation} \label{eqn:key_estimate2}
\frac{\pi \Gamma}{\log \frac{1}{\delta}} \le \frac{2C_2 s}{\epsilon \log \frac{1}{\delta}} \left(2E_\epsilon(m) - \|\nabla u\|_{L^2(\R_+^2)}^2\right) + \frac{\sqrt{\pi \Gamma \log \frac{1}{s} + C_1}}{\log \frac{1}{\delta}} \|\nabla u\|_{L^2(\R_+^2)}.
\end{equation}

We want to use this inequality to prove \eqref{estim12} first.
For this purpose, we choose $s \in (0, R]$ such that
\begin{equation} \label{eqn:definition_s}
\|\nabla u\|_{L^2(\R_+^2)}^2 = \frac{\pi \Gamma}{\log \frac{1}{s}} - \frac{2C_1}{\left(\log \frac{1}{s}\right)^2}.
\end{equation}
This is possible
whenever $\epsilon$ is sufficiently small because of \eqref{aprior_est}. Then \eqref{eqn:key_estimate2} and \eqref{aprior_est} imply
\[
\begin{split}
\frac{\pi \Gamma}{\log \frac{1}{\delta}} & \le \frac{2C_2 s}{\epsilon \log \frac{1}{\delta}} \left(\frac{\pi \Gamma}{\log \frac{1}{\delta}} - \frac{\pi \Gamma}{\log \frac{1}{s}} + \frac{2C_0}{\left(\log \frac{1}{\delta}\right)^2} + \frac{2C_1}{\left(\log \frac{1}{s}\right)^2}\right) \\
& \quad + \frac{\sqrt{\pi^2 \Gamma^2 \left(\log \frac{1}{s}\right)^2 - C_1\pi \Gamma \log \frac{1}{s} - 2C_1^2}}{\log \frac{1}{\delta} \log \frac{1}{s}}\\
& \le \frac{2C_2 s}{\epsilon \log \frac{1}{\delta}} \left(\frac{\pi \Gamma}{\log \frac{1}{\delta}} - \frac{\pi \Gamma}{\log \frac{1}{s}} + \frac{2C_0}{\left(\log \frac{1}{\delta}\right)^2} + \frac{2C_1}{\left(\log \frac{1}{s}\right)^2}\right) \\
& \quad + \frac{\pi \Gamma}{\log \frac{1}{\delta}} \left(1 - \frac{C_1}{2 \pi \Gamma \log \frac{1}{s}} - \frac{C_1^2}{\pi^2 \Gamma^2\left(\log \frac{1}{s}\right)^2}\right),
\end{split}
\]
because $\sqrt{1-\beta}\leq 1-\frac \beta 2$ for $\beta\in (0,1)$. That is,
\[
\frac{C_1}{2\pi \Gamma} + \frac{C_1^2}{\pi^2 \Gamma^2 \log \frac{1}{s}} \le \frac{2C_2 s}{\epsilon \log \frac{1}{\delta}} \left(\log \frac{1}{s} - \log \frac{1}{\delta} + \frac{2C_0 \log \frac{1}{s}}{\pi \Gamma \log \frac{1}{\delta}} + \frac{2C_1 \log \frac{1}{\delta}}{\pi \Gamma \log \frac{1}{s}}\right).
\]
In particular, there exist certain constants $C_3, C_4, C_5, C_6$,
all of them positive and depending only on $\alpha$, $N$, $C_0$, and $R$, such that
\begin{equation} \label{eqn:inequality_for_s}
C_3 + \frac{C_4}{\log \frac{1}{s}} \le \frac{s}{\delta} \left(\log \frac{\delta}{s} + \frac{C_5 \log \frac{1}{s}}{\log \frac{1}{\delta}} + \frac{C_6 \log \frac{1}{\delta}}{\log \frac{1}{s}}\right).
\end{equation}
We want to use this inequality to show that there exists a
constant $C_7 = C_7(\alpha, R, N, C_0) > 0$ such that $C_7 s \ge \delta$.
Then \eqref{estim12}
follows immediately from \eqref{eqn:definition_s}, because the right hand side
is increasing in $s$ when $\log \frac 1 s\geq 4C_1/(\pi \Gamma)$, which is the case for $\eps$ small due to \eqref{aprior_est}.

To this end, let $c = \min\{1, \frac{C_3}{4C_6}\}$. We distinguish three cases.
\begin{description}
\item[Case 1.] If $s \ge c\delta$, then the claim is obvious.
\item[Case 2.] If $s < c\delta$ and
\[
s \log \frac{1}{s} \ge \frac{C_3}{2C_5} \delta  \log \frac{1}{\delta},
\]
then it follows that $s \ge \delta^2$, provided
that $\epsilon$ is small enough. (Otherwise we would have an
immediate contradiction to the assumptions for this case.)
Hence $\log \frac{1}{s} \le 2\log \frac{1}{\delta}$
and
\[
s \ge \frac{C_3 \delta}{4C_5}.
\]
\item[Case 3.] If $s < c\delta$ and
\[
s \log \frac{1}{s} < \frac{C_3}{2C_5} \delta \log \frac{1}{\delta},
\]
then we obtain
\[
\frac{C_3}{2} + \frac{C_4}{\log \frac{1}{s}} \le \frac{s}{\delta} \left(\log \frac{\delta}{s}  + \frac{C_6 \log \frac{1}{\delta}}{\log \frac{1}{s}}\right)
\]
from \eqref{eqn:inequality_for_s}.
We also have $\log \frac{1}{s} > \log \frac{1}{\delta}$
(since $c \le 1$). Hence
\[
\frac{s \log \frac{1}{\delta}}{\delta \log \frac{1}{s}} \le c \le \frac{C_3}{4C_6}.
\]
Hence
\[
\frac{C_3}{4} \le \frac{s}{\delta} \log \frac{\delta}{s},
\]
which implies the claim.
\end{description}
This concludes the proof of \eqref{estim12}.

Now we go back to inequality \eqref{eqn:key_estimate} and use it for $s = \delta$
in order to prove
\eqref{estim11}. Since we now have \eqref{estim12}, the inequality
implies that
\begin{equation} \label{eqn:xi_estimate2}
\int_{\R_+^2} \nabla \xi_\delta \cdot \nabla u \, dx \ge \frac{\pi \Gamma}{\log \frac{1}{\delta}} - \frac{C_8}{\left(\log \frac{1}{\delta}\right)^2}
\end{equation}
for a constant $C_8 = C_8(\alpha, N, C_0, R)$. Hence
\[
\begin{split}
\int_{\R_+^2} |\nabla u - \nabla \xi_\delta|^2 \, dx & \le 2E_\epsilon(m) - 2\int_{\R_+^2} \nabla \xi_\delta \cdot \nabla u \, dx + \int_{\R_+^2} |\nabla \xi_\delta|^2 \, dx \\
& \le \frac{2C_0 + 2C_8 + C_1}{\left(\log \frac{1}{\delta}\right)^2}
\end{split}
\]
by \eqref{aprior_est}, \eqref{eqn:xi_energy} and \eqref{eqn:xi_estimate2}.
Since $\xi_\delta$ coincides with $u_{a, d}^*/\log \frac{1}{\delta}$
in $\Omega_\delta(a)$, this finally implies \eqref{estim11}.
\end{proof}

\begin{remark}
\label{rem:non_min}
Once we have \eqref{aprior_est} and \eqref{estim12} in Theorem \ref{thm:stray_field},
we can also derive the inequalities
$$\eps \int_{-1}^1 (\varphi')^2\, dx_1\leq \frac{C}{\left(\log \frac{1}{\delta}\right)^2} \quad \textrm{and}\quad \int_{a_n-\delta}^{a_n+\delta} \sin^2 \varphi\, dx_1\leq 
\frac{C\delta}{\log \frac{1}{\delta}}$$
for a lifting $\varphi$ of $m$ and for $n=1, \dots, N$, where $C = C(\alpha, N, R, C_0)$.
The first inequality is an immediate consequence of \eqref{aprior_est} and \eqref{estim12},
and the second follows with the same arguments as in the proof of
Lemma \ref{lemma:core_estimate}.
These estimates are similar to \eqref{eqn:core_estimate1} and \eqref{eqn:core_estimate2}, but now we know that they hold true for non-minimizing configurations (under the energy control \eqref{aprior_est}) and the Pohozaev identity previously used for the proof of \eqref{eqn:core_estimate1} and \eqref{eqn:core_estimate2} is no longer needed.
\end{remark}

\begin{remark} \label{rem:magnetostatic_core_energy}
Inequalities \eqref{aprior_est} and \eqref{estim12} further imply
that
\[
\int_{B_\delta^*(a)} |\nabla u|^2 \, dx \le \frac{2C_0 + C_1}{\left(\log \frac{1}{\delta}\right)^2}.
\]
If we combine this estimate with \eqref{estim11}, then we also obtain
\[
\int_{B_r^*(a)} |\nabla u|^2 \, dx \le \frac{1}{\left(\log \frac{1}{\delta}\right)^2} \left(2C_0 + 3C_1 + 2\int_{B_r^*(a) \backslash B_\delta^*(a)} |\nabla u_{a, d}^*|^2 \, dx\right)
\]
for any $r > 0$. Furthermore, since $u_{a, d}^*$ is known quite
explicitly, the last integral is typically not too difficult to estimate.
\end{remark}

\section{Proof of the main result}

We now prove Theorem \ref{thm1}. To this end, fix
$a \in A_N$ and $d \in \{\pm 1\}^N$. Set $\gamma_n = d_n - \cos \alpha$
for $n = 1, \ldots, N$ and
\[
\Gamma = \sum_{n = 1}^N \gamma_n^2.
\]
Furthermore, set $\gamma_\pm = 1 \mp \cos \alpha$ and
recall Definition \ref{def:e}, which introduces the function $e : \{\pm 1\} \to \R$ with
\[
e(\pm 1) = \lim_{\epsilon \searrow 0} \left(\left(\log \frac{1}{\delta}\right)^2 \inf_{M_{\gamma_\pm}} E_\epsilon^{\gamma_\pm} - \frac{\pi \gamma_\pm^2}{2} \log \frac{1}{\delta}\right).
\]

We divide the identity from Theorem \ref{thm1} into
two inequalities, which are proved in Section \ref{sect:lower} and
Section \ref{sect:upper}, respectively, after some preparation.
Throughout the proof, we indiscriminately write $C$ for various positive
constants that depend only on $\alpha$, $N$, $a$, $d$, and occasionally
on the exponents of $L^p$-spaces appearing in the context
(always denoted by $p$ or $q$).

\subsection{Preparation}

Define
\[
w_0(x) = \arctan \left(\frac{x_2}{x_1}\right) - \frac{\pi x_1}{2|x_1|}
\]
for $x \in \R_+^2$. Recall the functions
\[
u_{a, d}^* = \sum_{n = 1}^N \gamma_n u_{a_n} \quad \text{and} \quad
\mu_{a, d}^* = \sum_{n = 1}^N \gamma_n \mu_{a_n}
\]
from Section \ref{sect:reduced}.
Consider a number $r \in (0, \rho(a)]$. For $n = 1, \ldots, N$, let
\[
\lambda_n = \gamma_n \log(2 - 2a_n^2) + \sum_{k \not= n} \gamma_k \mu_{a_k}(a_n)
\]
again and recall estimate \eqref{eqn:mu_estimate}, which implies that
\begin{equation} \label{eqn:mu_estimate_repeated}
\left|\mu_{a, d}^*(x_1) - \lambda_n - \gamma_n \log \frac{1}{|x_1 - a_n|}\right| \le Cr
\end{equation}
for $x_1 \in [a_n - r, a_n + r]$.
Also define
\[
\omega_n = \sum_{k \not= n} \gamma_k u_{a_k}(a_n).
\]
Then
\begin{equation} \label{eqn:pointwise1}
|u_{a, d}^*(x) - \omega_n - \gamma_n w_0(x_1 - a_n, x_2)| \le Cr \quad \textrm{in }\, B_r(a_n, 0)
\end{equation}
and
\begin{equation} \label{eqn:pointwise2}
|\nabla u_{a, d}^*(x) - \gamma_n \nabla w_0(x_1 - a_n, x_2)| \le C \quad \textrm{in }\, B_r(a_n, 0)
\end{equation}
for $1\leq n\leq N$, because $u_{a, d}^*(x) - \omega_n - \gamma_n w_0(x_1 - a_n, x_2)$
is a smooth function that vanishes at $(a_n, 0)$.

We now study how $\delta$ changes when we replace
$\epsilon$ by $\epsilon/r$ for a number $r \in (\epsilon, 1]$,
since we will have to rescale the magnetisation about the
centres of the N\'eel walls.
We have
\[
\log \left(\frac{\epsilon}{r} \log \frac{r}{\epsilon}\right) = \log \delta - \log r + \log\left(1 - \frac{\log r}{\log \epsilon}\right).
\]
Since $\log(1-\xi)\leq -\xi$ for $\xi\in (0,1)$, we obtain
\begin{equation} \label{eqn:rescaled_delta1}
\log \left(\frac{\epsilon}{r} \log \frac{r}{\epsilon}\right) \le \log \delta - \log r - \frac{\log r}{\log \epsilon}.
\end{equation}
Similarly, if $\epsilon$ is sufficiently small (for a fixed $r$),
then
\begin{equation} \label{eqn:rescaled_delta2}
\log \left(\frac{\epsilon}{r} \log \frac{r}{\epsilon}\right) \ge \log \delta - \log r - \frac{2\log r}{\log \epsilon}.
\end{equation}

\subsection{A lower bound for the interaction energy} \label{sect:lower}

The purpose of this section is to prove the inequality
\begin{equation} \label{eqn:lower_bound}
\liminf_{\epsilon \searrow 0} \left(\left(\log \frac{1}{\delta}\right)^2 \inf_{M(a, d)} E_\epsilon  - \frac{\pi \Gamma}{2} \log \frac{1}{\delta}\right) \\
\ge \sum_{n = 1}^N e(d_n) + W(a, d)
\end{equation}
for the function $W$ defined at the end of Section \ref{sect:reduced},
which amounts to half of the statement of Theorem \ref{thm1}.

\paragraph{First step: use minimisers}
Clearly it is sufficient to consider functions $m_\epsilon \in W^{1, 2}((-1, 1); \Ss^1)$
that minimise $E_\epsilon$ in $M(a, d)$.
Then
\begin{equation} \label{eqn:minimisers}
\limsup_{\epsilon \searrow 0} \left(\left(\log \frac{1}{\delta}\right)^2 E_\epsilon(m_\epsilon) - \frac{\pi \Gamma}{2} \log \frac{1}{\delta}\right) < \infty
\end{equation}
by Proposition \ref{prop:construction}.

\paragraph{Second step: prove convergence away from the walls}
This part of the proof is similar to the proof of Theorem \ref{thm:core_convergence}.
Let $\varphi_\epsilon \in W^{1, 2}(-1, 1)$ such that
$m_\epsilon = (\cos \varphi_\epsilon, \sin \varphi_\epsilon)$.
Define $u_\epsilon = U(m_\epsilon)$ and $v_\epsilon = u_\epsilon \log \frac{1}{\delta}$.
Then by Theorem \ref{thm:stray_field}, we have a sequence
$\epsilon_k \searrow 0$ such that
$v_{\epsilon_k} \rightharpoonup v$ weakly in $\dot{W}^{1, 2}(\Omega_r(a))$ for every $r > 0$
for some function
\[
v \in u_{a, d}^* + \dot{W}^{1, 2}(\R_+^2).
\]
In fact, for any fixed $r > 0$, owing to Lemma \ref{lemma:m1_estimate} (with $\Omega=\R^2_+$), Theorem \ref{thm:stray_field}, and Remark \ref{rem:non_min}, there exists
a number $\beta>0$ (depending on $r$) such that $|\sin \varphi_\epsilon|\geq \beta$
at distance at least $r$ away from $-1, a_1, \dots, a_N, 1$ for $\eps$ small enough.
For $r, R > 0$, define $\Sigma_{r, R}(a) = (\Omega_r(a) \cap B_R^+(0)) \backslash (B_r(-1, 0) \cup B_r(1, 0))$. Then we can use Lemma \ref{lemma:higher_estimates}
and standard elliptic estimates to obtain uniform estimates in
$W^{2, 2}(\Sigma_{r, R}(a))$ for any
$r, R > 0$ and any sufficiently small $\epsilon$. Therefore, we even have
$v_{\epsilon_k} \rightharpoonup v$ weakly in $W^{2, 2}(\Sigma_{r, R}(a))$
for all $r, R > 0$. Furthermore, we have $v_{\epsilon_k} \to v$ uniformly
in $\R_+^2 \backslash B_2(0)$ by standard estimates for the Laplace equation.
It follows that $\lim_{|x| \to \infty} v(x) = 0$.

Obviously $\Delta v = 0$ in $\R_+^2$ and $\dd{v}{x_2} = 0$
on $(-1, -\infty) \times \{0\}$ and on $(1, \infty) \times \{0\}$. By Lemma \ref{lemma:higher_estimates},
we also have
\[
\limsup_{\epsilon \searrow 0} \left(\left(\log \frac{1}{\delta}\right)^2 \epsilon \int_{a_n + r}^{a_{n + 1} - r} (\varphi_\epsilon'')^2  \, dx_1\right) < \infty
\]
for $n = 1, \ldots, N - 1$ and any $r > 0$,
and we have similar inequalities in $(r - 1, a_1 - r)$ and in $(a_N + r, 1 - r)$.
Since
\[
u_\epsilon' = \epsilon \varphi_\epsilon''/ \sin \varphi_\epsilon
\]
by \eqref{eqn:Euler-Lagrange},
we conclude that $v(\blank, 0)$ is locally constant in
$(-1, 1) \backslash \{a_1, \ldots, a_N\}$. But there is only
one function in the space $u_{a, d}^* + \dot{W}^{1, 2}(\R_+^2)$
with these properties (which can be seen with the
arguments from the proof of Theorem \ref{thm:core_convergence}),
and thus we have $$v = u_{a, d}^* \quad \textrm{in }\, \R^2_+.$$

Since $\dd{v_\epsilon}{x_2}(\blank, 0) = -m_{1\epsilon}' \log \frac{1}{\delta}$
on $(-1, 1) \backslash \{a_1, \ldots, a_N\}$, it also follows
that there exists a sequence $\epsilon_k \searrow 0$ such that
\[
(m_{1 {\epsilon_k}} - \cos \alpha) \log \frac{1}{\delta_k} \to \nu
\]
locally uniformly in $(-1, 1) \backslash \{a_1, \ldots, a_N\}$
for a function $\nu : (-1, 1) \to [-\infty, \infty]$
such that $\mu_{a, d}^* - \nu$ is locally constant in
$(-1, 1) \backslash \{a_1, \ldots, a_N\}$, where
$\delta_k = \epsilon_k \log \frac{1}{\epsilon_k}$.
With the same arguments as in the proof of Theorem \ref{thm:core_convergence},
we show that $\nu = \mu_{a, d}^*$ and
\[
(m_{1\epsilon} - \cos \alpha) \log \frac{1}{\delta} \to \mu_{a, d}^* \quad \textrm{locally uniformly in }\, (-1, 1) \backslash \{a_1, \ldots, a_N\}.
\]

Now for any $r \in (0, \rho(a)]$, we have
\begin{equation} \label{eqn:energy_away_from_vortices}
\begin{split}
\int_{\Omega_r(a)} |\nabla u_{a, d}^*|^2 \, dx \le \liminf_{\epsilon \searrow 0} \int_{\Omega_r(a)} |\nabla v_\epsilon|^2 \, dx 
 = \liminf_{\epsilon \searrow 0} \left(\left(\log \frac{1}{\delta}\right)^2 \int_{\Omega_r(a)} |\nabla u_\epsilon|^2 \, dx\right).
\end{split}
\end{equation}
Furthermore, by \eqref{eqn:mu_estimate_repeated}, we have
\begin{equation} \label{eqn:m1_boundary}
\left|m_{1\epsilon} (a_n \pm r) - \cos \alpha - \frac{\gamma_n \log \frac{1}{r} + \lambda_n}{\log \frac{1}{\delta}}\right| \le \frac{Cr}{\log \frac{1}{\delta}} + \frac{o(1)}{\log \frac{1}{\delta}}.
\end{equation}
Here and subsequently, we use the notation $o(1)$ for any quantity that converges
to $0$ as $\epsilon \searrow 0$, with a rate of convergence possibly
depending on $r$.

\paragraph{Third step: rescale the cores}
Fix $n\in \{1, \dots, N\}$ and $r\in (0, \rho(a)]$,
and define the functions $\tilde{m}_\epsilon:(-1,1)\to \Ss^1$
and $\tilde{u}_\epsilon:\R^2_+\to \R$ and the number $\tilde{\epsilon}$ by
\begin{align*}
\tilde{m}_\epsilon(x_1) & = m_\epsilon(rx_1 + a_n), \\
\tilde{u}_\epsilon(x) & = u_\epsilon(r x_1 + a_n, r x_2), \\
\tilde{\epsilon} & = \frac{\epsilon}{r}.
\end{align*}
Then we have
\begin{equation} \label{eqn:m_identity}
\tilde{\epsilon} \int_{-1}^1 |\tilde{m}_\epsilon'|^2 \, dx_1 = \epsilon \int_{a_n - r}^{a_n + r} |m_\epsilon'|^2 \, dx_1
\end{equation}
and
\begin{equation} \label{eqn:u_identity}
\int_{B_1^+(0)} |\nabla \tilde{u}_\epsilon|^2 \, dx = \int_{B_r^+(a_n, 0)} |\nabla u_\epsilon|^2 \, dx.
\end{equation}
Moreover, if we denote $\tilde{u}^*(x) = u_{a, d}^*(r x_1 + a_n, r x_2)$
and $\tilde{v}_\epsilon = \tilde{u}_\epsilon \log \frac{1}{\delta}$, then by
the observations in the second step, we have
$\tilde{v}_\epsilon - \tilde{u}^* \rightharpoonup 0$
weakly in $W^{2, 2}(B_1^+(0) \backslash B_{1/2}(0))$. In particular,
if we fix a number $q > 2$, then we have
strong convergence of the boundary data in $W^{1, q}(\partial^+ B_1^+(0))$.
Because of \eqref{eqn:pointwise2}, we have
\[
\|x \cdot \nabla \tilde{u}^*(x)\|_{L^\infty(\partial^+ B_1(0))} \le Cr.
\]
Hence
\[
\|x \cdot \nabla \tilde{u}_\epsilon\|_{L^q(\partial^+ B_1(0))} \le \frac{Cr + o(1)}{\log \frac{1}{\delta}}.
\]
From Theorem \ref{thm:stray_field}, Remark \ref{rem:magnetostatic_core_energy},
and inequality \eqref{eqn:pointwise2}, we also obtain the inequality
\[
\left\|\nabla \tilde{u}_\epsilon - \frac{\gamma_n x^\perp}{|x|^2 \log \frac{1}{\delta}}\right\|_{L^2(B_1^+(0) \backslash B_{\tilde{\delta}}(0))} + \|\nabla \tilde{u}_\epsilon\|_{L^2(B_{\tilde{\delta}}^+(0))} \le \frac{C}{\log \frac{1}{\delta}},
\]
where $\tilde{\delta} = \tilde{\epsilon} \log \frac{1}{\tilde{\epsilon}}$.
We then also obtain
\[
\|x \cdot \nabla \tilde{u}_\epsilon\|_{L^q(\partial^+ B_1(0))} \le \frac{Cr + o(1)}{\log \frac{1}{\tilde{\delta}}}
\]
and
\[
\left\|\nabla \tilde{u}_\epsilon - \frac{\gamma_n x^\perp}{|x|^2 \log \frac{1}{\tilde{\delta}}}\right\|_{L^2(B_1^+(0) \backslash B_{\tilde{\delta}}(0))} + \|\nabla \tilde{u}_\epsilon\|_{L^2(B_{\tilde{\delta}}^+(0))} \le \frac{C}{\log \frac{1}{\tilde{\delta}}}.
\]
Because of this and \eqref{eqn:m1_boundary}, we may apply
Corollary~\ref{cor:core_estimate} to $d_n \tilde{m}_{1\epsilon}$ with
\[
\gamma = d_n \gamma_n, \quad \eta=Cr+o(1), \quad  \text{and} \quad \zeta = d_n\left(\lambda_n + \gamma_n \log \frac{1}{r}\right) + Cr + o(1).
\]
In view of Definition \ref{def:e}, we conclude that
\begin{multline} \label{eqn:energy_lower_estimate}
\left(\log \frac{1}{\tilde{\delta}}\right)^2 \left(\tilde{\epsilon} \int_{-1}^1 |\tilde{m}_\epsilon'|^2 \, dx_1 + \int_{B_1^+(0)} |\nabla \tilde{u}_\epsilon|^2 \, dx\right) \\
\ge \pi \gamma_n^2 \log \frac{1}{\tilde{\delta}} + 2e(d_n) - 2\pi \gamma_n^2 \log \frac{1}{r} - 2\pi \gamma_n \lambda_n - Cr - o(1).
\end{multline}
We recall \eqref{eqn:rescaled_delta1} and \eqref{eqn:rescaled_delta2}.
Now \eqref{eqn:m_identity}, \eqref{eqn:u_identity}, and
\eqref{eqn:energy_lower_estimate} imply that
\begin{multline*}
\left(\log \frac{1}{\delta} - \log \frac{1}{r}\right)^2 \left(\epsilon \int_{a_n - r}^{a_n + r} |m_\epsilon'|^2 \, dx_1 + \int_{B_r^+(a_n, 0)} |\nabla u_\epsilon|^2 \, dx\right) - \pi \gamma_n^2 \left(\log \frac{1}{\delta} - \log \frac{1}{r}\right) \\
\ge 2e(d_n) - 2\pi \gamma_n^2 \log \frac{1}{r} - 2\pi \gamma_n \lambda_n - Cr - o(1).
\end{multline*}
Since this means in particular that
\[
\log \frac{1}{\delta} \left(\epsilon \int_{a_n - r}^{a_n + r} |m_\epsilon'|^2 \, dt + \int_{B_r^+(a_n, 0)} |\nabla u_\epsilon|^2 \, dx\right) \ge \pi \gamma_n^2 + o(1),
\]
and since we have \eqref{eqn:minimisers},
the inequality also yields
\begin{multline*}
\left(\log \frac{1}{\delta}\right)^2 \left(\epsilon \int_{a_n - r}^{a_n + r} |m_\epsilon'|^2 \, dt + \int_{B_r^+(a_n, 0)} |\nabla u_\epsilon|^2 \, dx\right) - \pi \gamma_n^2 \left(\log \frac{1}{\delta} - \log \frac{1}{r}\right) \\
\ge 2e(d_n) - 2\pi \gamma_n \lambda_n - Cr - o(1).
\end{multline*}

\paragraph{Fourth step: combine the estimates}
If we sum over $n$ and use \eqref{eqn:energy_away_from_vortices},
we obtain
\[
\left(\log \frac{1}{\delta}\right)^2 E_\epsilon(m_\epsilon) - \frac{\pi \Gamma}{2} \left(\log \frac{1}{\delta} - \log \frac{1}{r}\right) \ge \sum_{n = 1}^N (e(d_n) - \pi \gamma_n \lambda_n) + \frac{1}{2} \int_{\Omega_r(a)} |\nabla u_{a, d}^*|^2 \, dx - Cr - o(1).
\]
Thus
\[
\liminf_{\epsilon \searrow 0} \left(\left(\log \frac{1}{\delta}\right)^2 E_\epsilon(m_\epsilon) - \frac{\pi \Gamma}{2} \log \frac{1}{\delta}\right) 
\ge \sum_{n = 1}^N (e(d_n) - \pi \gamma_n \lambda_n) + \frac{1}{2} \int_{\Omega_r(a)} |\nabla u_{a, d}^*|^2 \, dx - \frac{\pi \Gamma}{2} \log \frac{1}{r} - Cr.
\]
Finally we let $r \searrow 0$. Recall that
\[
\frac{1}{2} \lim_{r \searrow 0} \left(\int_{\Omega_r(a)} |\nabla u_{a, d}^*|^2 \, dx - \pi \Gamma \log \frac{1}{r}\right) =E_{a, d}^*(u_{a, d}^*)= W_1(a, d)
\]
and
\[
- \pi \sum_{n = 1}^N \gamma_n \lambda_n = W_2(a, d)
\]
for the functions $W_1$ and $W_2$ defined in Section \ref{sec:rescale_tail}.
Hence we conclude that
\[
\liminf_{\epsilon \searrow 0} \left(\left(\log \frac{1}{\delta}\right)^2 E_\epsilon(m_\epsilon) - \frac{\pi \Gamma}{2} \log \frac{1}{\delta}\right) \ge \sum_{n = 1}^N e(d_n) + W(a, d).
\]
That is, inequality \eqref{eqn:lower_bound} is indeed satisfied.

\subsection{An upper bound for the interaction energy} \label{sect:upper}

We now want to prove the inequality
\begin{equation} \label{eqn:upper_bound}
\limsup_{\epsilon \searrow 0} \left(\left(\log \frac{1}{\delta}\right)^2 \inf_{M(a, d)} E_\epsilon  - \frac{\pi\Gamma}{2} \log \frac{1}{\delta}\right) \le \sum_{n = 1}^N e(d_n) + W(a, d),
\end{equation}
which complements \eqref{eqn:lower_bound}.

\paragraph{First step: glue energy minimising cores into the tail profile} 
Define
\[
\kappa_n^r = \frac{\lambda_n + \gamma_n {\log \frac 1 r}}{\gamma_n \log \frac{1}{\delta}}.
\]
Fix $r \in (0, \rho(a)]$
small enough so that $\kappa_n^r \in (0, 1)$ for sufficiently
small values of $\epsilon$.
Choose minimisers $\hat{\mu}_\epsilon^n \in M_{|\gamma_n|}$ of the
functionals $E_\epsilon^{|\gamma_n|}$ and let $\hat{u}_\epsilon^n$
be the solutions of the corresponding boundary value
problem \eqref{eqn:core_harmonic}--\eqref{eqn:core_boundary2} with
\[
\int_{B_1^+(0)\setminus B_{1/2}(0)} \hat{u}_\epsilon^n \, dx = 0.
\]
Define $\mu_\epsilon^n = d_n \hat{\mu}_\epsilon^n$ and
$u_\epsilon^n = d_n \hat{u}_\epsilon^n$.

Let $\eta \in C^\infty(\R)$ with $\eta \equiv 1$ in
$(- \infty, \frac{1}{2}]$ and $\eta \equiv 0$ in
$[\frac{3}{4}, \infty)$. Set
\[
\varphi_n(x_1) = \eta\left(\frac{|x_1 - a_n|}{r}\right) \quad \text{and} \quad \psi_n(x) = \eta\left(\frac{\sqrt{(x_1 - a_n)^2 + x_2^2}}{r}\right).
\]
Note that $\frac{\partial \psi_n}{\partial x_2}=0$ on $\R\times \{0\}$.
Now define
\[
\begin{split}
m_{1\epsilon}(x_1) & = \cos \alpha + \frac{\mu_{a, d}^*(x_1)}{\log \frac{1}{\delta}} \\
& \quad + \sum_{n = 1}^N \varphi_n(x_1) \left((1 - \kappa_n^r) \mu_{\epsilon/r}^n\left(\frac{x_1 - a_n}{r}\right) + \kappa_n^r d_n - \cos \alpha - \frac{\mu_{a, d}^*(x_1)}{\log \frac{1}{\delta}}\right) 
\end{split}
\]
and
\[
\tilde{u}_\epsilon(x) = \frac{u_{a, d}^*(x)}{\log \frac{1}{\delta}} + \sum_{n = 1}^N \psi_n(x) \left((1 - \kappa_n^r) u_{\epsilon/r}^n\left(\frac{(x_1 - a_n, x_2)}{r}\right) - \frac{u_{a, d}^*(x) - \omega_n}{\log \frac{1}{\delta}}\right).
\]
Then $m_{1\epsilon}(a_n) = d_n$ for $n = 1, \ldots, N$.
If $\epsilon$ is sufficiently small, we have $-1 \le m_{1\epsilon} \le 1$.
Hence there exists a function $m_{2\epsilon}$ such that
$m_\epsilon = (m_{1\epsilon}, m_{2\epsilon}) \in M(a, d)$.
Let $u_\epsilon = U(m_\epsilon)$.

\paragraph{Second step: estimate the magnetostatic energy in terms of $\tilde{u}_\epsilon$}
Since $u_{a, d}^*$ is harmonic on $\R^2_+$, we compute
\[
\begin{split}
\Delta \tilde{u}_\epsilon(x) & = \sum_{n = 1}^N \Delta \psi_n(x) \left((1 - \kappa_n^r) u_{\epsilon/r}^n \left(\frac{(x_1 - a_n, x_2)}{r}\right) - \frac{u_{a, d}^*(x) - \omega_n}{\log \frac{1}{\delta}}\right) \\
& \quad + 2\sum_{n = 1}^N \nabla \psi_n(x) \left(r^{-1} (1 - \kappa_n^r) \nabla u_{\epsilon/r}^n \left(\frac{(x_1 - a_n, x_2)}{r}\right) - \frac{\nabla u_{a, d}^*(x)}{\log \frac{1}{\delta}}\right).
\end{split}
\]
Let $\Sigma_n^r = B_{3r/4}^+(a_n, 0) \backslash B_{r/2}(a_n, 0)$.
Using Theorem \ref{thm:core_convergence} and the inequalities
\eqref{eqn:pointwise1} and \eqref{eqn:pointwise2}, we infer
\begin{equation} \label{eqn:v_estimate}
\left\|u_{\epsilon/r}^n\left(\frac{(x_1 - a_n, x_2)}{r}\right) - \frac{u_{a, d}^*(x) - \omega_n}{\log \frac{1}{\delta}}\right\|_{L^\infty(\Sigma_n^r)} \le \frac{Cr + o(1)}{\log \frac{1}{\delta}}
\end{equation}
and
\begin{equation} \label{eqn:v_derivatives}
\left\|\nabla u_{\epsilon/r}^n \left(\frac{(x_1 - a_n, x_2)}{r}\right) - \frac{r \nabla u_{a, d}^*(x)}{\log \frac{1}{\delta}}\right\|_{L^p(\Sigma_n^r)} \le \frac{Cr^{1 + 2/p} + o(1)}{\log \frac{1}{\delta}}
\end{equation}
for any $p < \infty$ and all $n = 1, \ldots, N$.
(Here $o(1)$ again stands for any quantity that converges to $0$
as $\epsilon \searrow 0$, with a rate of convergence that may
possibly depend on $r$.)
Therefore, we have
\[
\|\Delta \tilde{u}_\epsilon\|_{L^p(\R_+^2)} \le \frac{C r^{2/p - 1} + o(1)}{\log \frac{1}{\delta}}.
\]

We also compute
\[
\begin{split}
\dd{\tilde{u}_\epsilon}{x_2}(x_1, 0) & = - \frac{\frac{d}{dx_1} \mu_{a, d}^*(x_1)}{\log \frac{1}{\delta}} - \sum_{n = 1}^N \varphi_n(x_1)
 \left(\frac{1 - \kappa_n^r}{r} \frac{d\mu_{\epsilon/r}^n}{dx_1}\left(\frac{x_1 - a_n}{r}\right) - \frac{\frac{d}{dx_1} \mu_{a, d}^*(x_1)}{\log \frac{1}{\delta}}\right) \\
& = - m_{1\epsilon}'(x_1) + \sum_{n = 1}^N \varphi_n'(x_1) \left((1 - \kappa_n^r) \mu_{\epsilon/r}^n\left(\frac{x_1 - a_n}{r}\right) + d_n \kappa_n^r - \cos \alpha - \frac{\mu_{a, d}^*(x_1)}{\log \frac{1}{\delta}}\right).
\end{split}
\]
Using Theorem \ref{thm:core_convergence}, we then see that
\[
\left|\mu_\epsilon^n(x_1) - \cos \alpha + \frac{\gamma_n \log |x_1|}{\log \frac{1}{\delta}}\right| \le \frac{o(1)}{\log \frac{1}{\delta}}
\]
for $x_1 \in [-\frac{3}{4}, -\frac{1}{2}] \cup [\frac{1}{2}, \frac{3}{4}]$.
This, together with \eqref{eqn:mu_estimate_repeated}, implies that
\begin{equation} \label{eqn:mu_core}
\left|(1 - \kappa_n^r) \mu_{\epsilon/r}^n\left(\frac{x_1 - a_n}{r}\right) + d_n \kappa_n^r - \cos \alpha - \frac{\mu_{a, d}^*(x_1)}{\log \frac{1}{\delta}}\right| \le \frac{Cr + o(1)}{\log \frac{1}{\delta}}
\end{equation}
for any $x_1 \in [a_n - \frac{3r}{4}, a_n - \frac{r}{2}]
\cup [a_n + \frac{r}{2}, a_n + \frac{3r}{4}]$.

Recall that $u_\epsilon$ is the solution of
\begin{alignat*}{2}
\Delta u_\epsilon & = 0 & \quad & \text{in $\R_+^2$}, \\
\dd{u_\epsilon}{x_2} & = - m_{1\epsilon}' && \text{on $(-1, 1) \times \{0\}$}, \\
\dd{u_\epsilon}{x_2} & = 0 && \text{on $(-\infty, 1) \times \{0\}$ and $(1, \infty) \times \{0\}$}.
\end{alignat*}
Thus we have
\begin{equation} \label{eqn:Laplacian}
\|\Delta (u_\epsilon - \tilde{u}_\epsilon)\|_{L^p(\R_+^2)} \le \frac{Cr^{2/p - 1} + o(1)}{\log \frac{1}{\delta}}
\end{equation}
for an arbitrary (but fixed) $p \in (1, 2)$ and
\begin{equation} \label{eqn:boundary_derivative}
\left\|\dd{}{x_2} (u_\epsilon - \tilde{u}_\epsilon)\right\|_{L^\infty(\R)} \le \frac{C + o(1)}{\log \frac{1}{\delta}}.
\end{equation}
Also note that the support of $\Delta (u_\epsilon - \tilde{u}_\epsilon)$
is contained in $B_r^*(a)$ and the support of
$\dd{}{x_2} (u_\epsilon - \tilde{u}_\epsilon)$ is contained in
$\bigcup_{n = 1}^N (a_n - r, a_n + r)$.
Thus if $\mathcal{M}(\R^2)$ denotes the space of Radon measures
on $\R^2$, then after extending to $\R^2$ by an even reflection
on $\R \times \{0\}$,
we have
\[
\|\Delta (u_\epsilon - \tilde{u}_\epsilon)\|_{\mathcal{M}(\R^2)} \le \frac{Cr + o(1)}{\log \frac{1}{\delta}},
\]
whence
\[
\|\nabla (u_\epsilon - \tilde{u}_\epsilon)\|_{L^q(B_2^+(0))} \le \frac{Cr + o(1)}{\log \frac{1}{\delta}}
\]
for any fixed $q \in [1, 2)$. Theorem \ref{thm:core_energy_estimate}
and \eqref{eqn:v_estimate} imply that
\[
\|\nabla \tilde{u}_\epsilon\|_{L^q(B_2^+(0))} \le \frac{C + o(1)}{\log \frac{1}{\delta}}.
\]
It then follows that
\begin{equation} \label{eqn:Lq}
\|\nabla u_\epsilon\|_{L^q(B_2^+(0))} \le \frac{C + o(1)}{\log \frac{1}{\delta}}
\end{equation}
as well. We will use this inequality for
$q = \frac{2p}{3p - 2}$ in conjunction with
\eqref{eqn:Laplacian}.

Let
\[
\bar{u}_\epsilon = \fint_{B_1^+(0)} u_\epsilon \, dx.
\]
Then it follows that
\[
\begin{split}
\int_{\R_+^2} |\nabla u_\epsilon|^2 \, dx & = - \int_{(-1, 1) \times \{0\}} (u_\epsilon - \bar{u}_\epsilon) \dd{u_\epsilon}{x_2} \, dx_1 \\
& = \int_{\R_+^2} \nabla \tilde{u}_\epsilon \cdot \nabla u_\epsilon \, dx - \int_{(-1, 1) \times \{0\}} (u_\epsilon - \bar{u}_\epsilon) \dd{}{x_2} (u_\epsilon - \tilde{u}_\epsilon) \, dx_1 + \int_{\R_+^2} (u_\epsilon - \bar{u}_\epsilon) \Delta \tilde{u}_\epsilon \, dx.
\end{split}
\]
Now with the help of \eqref{eqn:boundary_derivative}, \eqref{eqn:Lq}, and the
continuous embedding $W^{1, q}(B_2^+(0)) \to L^{q/(2 - q)}(-1, 1)$,
we derive the estimate
\[
- \int_{(-1, 1) \times \{0\}} (u_\epsilon - \bar{u}_\epsilon) \dd{}{x_2} (u_\epsilon - \tilde{u}_\epsilon) \, dx_1 \le \frac{C (r^{2 - 2/q} + o(1))}{\left(\log \frac{1}{\delta}\right)^2}.
\]
(Note that for $q = \frac{2p}{3p - 2}$, we have
$2 - \frac{2}{q} = \frac{2}{p} - 1$.)
Furthermore, by \eqref{eqn:Laplacian}, \eqref{eqn:Lq}, and the Sobolev inequality,
\[
\int_{\R_+^2} (u_\epsilon - \bar{u}_\epsilon) \Delta \tilde{u}_\epsilon \, dx \le \frac{C (r^{2/p - 1} + o(1))}{\left(\log \frac{1}{\delta}\right)^2}.
\]
If we choose $p = \frac{4}{3}$, then we obtain
\[
\int_{\R_+^2} |\nabla u_\epsilon|^2 \, dx \le \int_{\R_+^2} |\nabla \tilde{u}_\epsilon|^2 \, dx + \frac{C\sqrt{r} + o(1)}{\left(\log \frac{1}{\delta}\right)^2}.
\]
Hence
\begin{equation} \label{eqn:energy1}
E_\epsilon(m_\epsilon) \le \frac{\epsilon}{2} \int_{-1}^1 |m_\epsilon'|^2 \, dt + \frac{1}{2} \int_{\R_+^2} |\nabla \tilde{u}_\epsilon|^2 \, dx + \frac{C\sqrt{r} + o(1)}{\left(\log \frac{1}{\delta}\right)^2}.
\end{equation}

\paragraph{Third step: estimate $\|\nabla \tilde{u}_\epsilon\|_{L^2(\R_+^2)}$}
We clearly have
\begin{equation} \label{eqn:energy2}
\int_{\Omega_r(a)} |\nabla \tilde{u}_\epsilon|^2 \, dx = \frac{1}{\left(\log \frac{1}{\delta}\right)^2} \int_{\Omega_r(a)} |\nabla u_{a, d}^*|^2 \, dx.
\end{equation}
In $B_r^+(a_n, 0)$ for $n = 1, \ldots, N$, we have
\[
\begin{split}
\nabla \tilde{u}_\epsilon(x) & = r^{-1} (1 - \kappa_n^r) \nabla u_{\epsilon/r}^n\left(\frac{(x_1 - a_n, x_2)}{r}\right) \\
& \quad + (\psi_n(x) - 1) \left(r^{-1} (1 - \kappa_n^r) \nabla u_{\epsilon/r}^n \left(\frac{(x_1 - a_n, x_2)}{r}\right) - \frac{\nabla u_{a, d}^*(x)}{\log \frac{1}{\delta}}\right) \\
& \quad + \nabla \psi_n(x) \left((1 - \kappa_n^r) u_{\epsilon/r}^n\left(\frac{(x_1 - a_n, x_2)}{r}\right) - \frac{u_{a, d}^*(x) - \omega_n}{\log \frac{1}{\delta}}\right).
\end{split}
\]
Thus using \eqref{eqn:v_estimate}, \eqref{eqn:v_derivatives} and Theorem \ref{thm:core_convergence}, and
observing that
\[
\|\nabla u_{\epsilon/r}^n\|_{L^2(B_1^+(0) \backslash B_{1/2}(0))}^2 \le \frac{C}{\left(\log \frac{1}{\delta}\right)^2}
\]
by Theorem \ref{thm:core_energy_estimate}, we obtain
\begin{equation} \label{eqn:energy3}
\|\nabla \tilde{u}_\epsilon\|_{L^2(B_r^+(a_n, 0))}^2 \le (1 - \kappa_n^r)^2 \|\nabla u_{\epsilon/r}^n\|_{L^2(B_1^+(0))}^2 + \frac{Cr + o(1)}{\left(\log \frac{1}{\delta}\right)^2}.
\end{equation}
Combining \eqref{eqn:energy2} and \eqref{eqn:energy3}, we now find that
\[
\int_{\R_+^2} |\nabla \tilde{u}_\epsilon|^2 \, dx \le \frac{1}{\left(\log \frac{1}{\delta}\right)^2} \int_{\Omega_r(a)} |\nabla u_{a, d}^*|^2 \, dx + \sum_{n = 1}^N (1 - 2\kappa_n^r) \int_{B_1^+(0)} |\nabla u_{\epsilon/r}^n|^2 \, dx + \frac{Cr + o(1)}{\left(\log \frac{1}{\delta}\right)^2}.
\]
Since
\[
\int_{B_1^+(0)} |\nabla u_{\epsilon/r}^n|^2 \, dx \ge \frac{\pi \gamma_n^2}{\log \frac{1}{\delta}} - \frac{C}{\left(\log \frac{1}{\delta}\right)^2}
\]
by Theorem \ref{thm:core_energy_estimate},
it follows that
\begin{multline} \label{eqn:energy4}
\int_{\R_+^2} |\nabla \tilde{u}_\epsilon|^2 \, dx \le \frac{1}{\left(\log \frac{1}{\delta}\right)^2} \int_{\Omega_r(a)} |\nabla u_{a, d}^*|^2 \, dx + \sum_{n = 1}^N \int_{B_1^+(0)} |\nabla u_{\epsilon/r}^n|^2 \, dx \\
- \sum_{n = 1}^N \frac{2\pi \gamma_n(\lambda_n + \gamma_n \log \frac{1}{r})}{\left(\log \frac{1}{\delta}\right)^2} + \frac{Cr + o(1)}{\left(\log \frac{1}{\delta}\right)^2}.
\end{multline}

\paragraph{Fourth step: estimate the exchange energy}
Note that we have $|m_{1\epsilon} - \cos \alpha| \le \frac{C}{\log \frac{1}{\delta}}$
in $(-1, a_1 - \frac{r}{2}]$, in $[a_n + \frac{r}{2}, a_{n + 1} - \frac{r}{2}]$
for $n = 1, \ldots, N$, and in $[a_N - \frac{r}{2}, 1)$ by
Theorem \ref{thm:core_convergence}. Moreover,
by \eqref{eqn:mu_core}, we have
\[
\begin{split}
d_n - m_{1\epsilon}(x_1) & = (1 - \kappa_n^r) \left(d_n - \mu_{\epsilon/r}^n\left(\frac{x_1 - a_n}{r}\right)\right) \\
&+ (1 - \varphi_n(x_1)) \left((1 - \kappa_n^r) \mu_{\epsilon/r}^n\left(\frac{x_1 - a_n}{r}\right) - \cos \alpha - \frac{\mu_{a, d}^*(x_1)}{\log \frac{1}{\delta}} + d_n \kappa_n^r\right) \\
& = \left(d_n - \mu_{\epsilon/r}^n\left(\frac{x_1 - a_n}{r}\right)\right) \left(1 - \frac{O(1)}{\log \frac{1}{\delta}}\right)
\end{split}
\]
and
\[
\begin{split}
d_n + m_{1\epsilon}(x_1) & = (1 - \kappa_n^r)\left(d_n + \mu_{\epsilon/r}^n\left(\frac{x_1 - a_n}{r}\right)\right) + 2 d_n \kappa_n^r \\
& \quad + (1 - \varphi_n(x_1)) \left((\kappa_n^r - 1) \mu_{\epsilon/r}^n\left(\frac{x_1 - a_n}{r}\right) -d_n \kappa_n^r+ \cos \alpha + \frac{\mu_{a, d}^*(x_1)}{\log \frac{1}{\delta}}\right) \\
& =\left(d_n + \mu_{\epsilon/r}^n\left(\frac{x_1 - a_n}{r}\right)\right) \left(1 - \frac{O(1)}{\log \frac{1}{\delta}}\right)
\end{split}
\]
in $(a_n - r, a_n + r)$. 
We also have
\[
\begin{split}
m_{1\epsilon}'(x_1) & = \frac{1 - \kappa_n^r}{r} \frac{d\mu_{\epsilon/r}^n}{dx_1}\left(\frac{x_1 - a_n}{r}\right) \\
& \quad + \varphi_n'(x_1)\left((1 - \kappa_n^r) \mu_{\epsilon/r}^n\left(\frac{x_1 - a_n}{r}\right) + d_n \kappa_n^r - \cos \alpha - \frac{\mu_{a, d}^*(x_1)}{\log \frac{1}{\delta}}\right) \\
& \quad - (1 - \varphi_n(x_1)) \left(\frac{1 - \kappa_n^r}{r} \frac{d\mu_{\epsilon/r}^n}{dx_1}\left(\frac{x_1 - a_n}{r}\right) - \frac{\frac{d}{dx_1} \mu_{a, d}^*(x_1)}{\log \frac{1}{\delta}}\right)
\end{split}
\]
near $a_n$.
We have
\[
\frac{d\mu_{a, d}^*}{dx_1}(x_1) = - \dd{u_{a, d}^*}{x_2}(x_1, 0).
\]
Hence defining
$T_n^r = (a_n - r, a_n - \frac{r}{2}) \cup (a_n + \frac{r}{2}, a_n + r)$, we have
\[
\left\|\frac{1 - \kappa_n^r}{r} \frac{d\mu_{\epsilon/r}^n}{dx_1}\left(\frac{x_1 - a_n}{r}\right) - \frac{\frac{d}{dx_1} \mu_{a, d}^*(x_1)}{\log \frac{1}{\delta}}\right\|_{L^2(T_n^r)} \le \frac{C\sqrt{r} + o(1)}{\log \frac{1}{\delta}}
\]
by Theorem \ref{thm:core_convergence} and \eqref{eqn:pointwise2}.
Recalling \eqref{eqn:mu_core}, we then compute
\begin{equation} \label{eqn:energy_core}
\frac{\epsilon}{2} \int_{-1}^1 |m_\epsilon'|^2 \, dt = \sum_{n = 1}^N \frac{\epsilon}{2r} \int_{-1}^1 \frac{(\frac{d}{dx_1}\mu_{\epsilon/r}^n)^2}{1 - (\mu_{\epsilon/r}^n)^2} \, dt + \frac{o(1)}{\left(\log \frac{1}{\delta}\right)^2}.
\end{equation}
Combining \eqref{eqn:energy1}, \eqref{eqn:energy4},
and \eqref{eqn:energy_core}, we now find
\[
E_\epsilon(m_\epsilon) \le \frac{1}{2 \left(\log \frac{1}{\delta}\right)^2}  \int_{\Omega_r(a)} |\nabla u_{a, d}^*|^2 \, dx + \sum_{n = 1}^N \inf_{M_{|\gamma_n|}} E_{\epsilon/r}^{|\gamma_n|} - \sum_{n = 1}^N \frac{\pi \gamma_n(\lambda_n + \gamma_n \log \frac{1}{r})}{\left(\log \frac{1}{\delta}\right)^2} + \frac{C \sqrt{r} + o(1)}{\left(\log \frac{1}{\delta}\right)^2}.
\]

\paragraph{Fifth step: estimate the core energy}
Recalling Definition \ref{def:e}, we see that
\[
\inf_{M_{|\gamma_n|}} E_{\epsilon/r}^{|\gamma_n|} \le \frac{\pi \gamma_n^2}{2\log \frac{1}{\tilde{\delta}}} + \frac{e(d_n) + o(1)}{\left(\log \frac{1}{\tilde{\delta}}\right)^2},
\]
where
\[
\tilde{\delta} = \frac{\epsilon}{r} \log \frac{r}{\epsilon}.
\]
Using the estimates \eqref{eqn:rescaled_delta1} and \eqref{eqn:rescaled_delta2},
we obtain
\[
\inf_{M_{|\gamma_n|}} E_{\epsilon/r}^{|\gamma_n|} \le \frac{\pi \gamma_n^2}{2\log \frac{1}{\delta}} + \frac{\pi \gamma_n^2 \log \frac{1}{r}}{2\left(\log \frac{1}{\delta}\right)^2} + \frac{e(d_n) + o(1)}{\left(\log \frac{1}{\delta}\right)^2}.
\]

\paragraph{Sixth step: combine the estimates}
It follows that
\begin{multline*}
\left(\log \frac{1}{\delta}\right)^2 E_\epsilon(m_\epsilon) \le \frac{1}{2} \int_{\Omega_r(a)} |\nabla u_{a, d}^*|^2 \, dx + \frac{\pi \Gamma}{2} \log \frac{1}{\delta} - \frac{\pi \Gamma}{2} \log \frac{1}{r} + \sum_{n = 1}^N e(d_n) + W_2(a, d) \\
+ C \sqrt{r} + o(1).
\end{multline*}
That is,
\begin{multline*}
\limsup_{\epsilon \searrow 0} \left(\left(\log \frac{1}{\delta}\right)^2 E_\epsilon(m_\epsilon) - \frac{\pi \Gamma}{2} \log \frac{1}{\delta}\right) \le \frac{1}{2} \int_{\Omega_r(a)} |\nabla u_{a, d}^*|^2 \, dx - \frac{\pi\Gamma}{2} \log \frac{1}{r} + \sum_{n = 1}^N e(d_n) + W_2(a, d) \\
+ C\sqrt{r}.
\end{multline*}
Letting $r \searrow 0$, we finally obtain
\[
\limsup_{\epsilon \searrow 0} \left(\left(\log \frac{1}{\delta}\right)^2 E_\epsilon(m_\epsilon) - \frac{d\pi\Gamma}{2} \log \frac{1}{\delta}\right) \le \sum_{n = 1}^N e(d_n) + W(a, d),
\]
which amounts to inequality \eqref{eqn:upper_bound}.
This completes the proof of Theorem \ref{thm1}.

\paragraph{Acknowledgement} RI acknowledges partial support from the ANR project ANR-14-CE25-0009-01.

\bibliographystyle{siam}
\bibliography{bib}

\end{document}